\documentclass[a4paper,12pt]{amsart}
\usepackage{amsmath,amssymb, amsthm}
\usepackage{appendix}
%
%
%
%
%
%
%
%
%
\usepackage{amsmath}
\usepackage{amssymb}
\usepackage{amscd}
\usepackage{latexsym}
\usepackage[dvipdfmx]{graphicx}
\usepackage{graphicx}
\usepackage{here}
\usepackage{xcolor}
\usepackage{palatino}
%
%
%
%
%
%
%
%
%
%
%
%
%
%
\setlength{\topmargin}{0cm}
\setlength{\textheight}{230mm}
\setlength{\textwidth}{150mm}
\setlength{\oddsidemargin}{5mm}
\setlength{\evensidemargin}{5mm}
%
%
%
%
\newtheorem{theorem}{Theorem}[section]
\newtheorem{lemma}[theorem]{Lemma}
\newtheorem{proposition}[theorem]{Proposition}
\newtheorem{corollary}[theorem]{Corollary}
\newtheorem{assertion}[theorem]{Assertion}

\theoremstyle{definition}
\newtheorem{definition}[theorem]{Definition}
\newtheorem{example}[theorem]{Example}

\theoremstyle{remark}
\newtheorem{remark}[theorem]{Remark}

\numberwithin{equation}{section}

\newcommand{\m}{{\mathfrak m}}
\newcommand{\R}{{\mathbb R}}

\newcommand{\C}{{\mathbb C}}
\newcommand{\Z}{{\mathbb Z}}

\newcommand{\Hyp}{{\mathbb H}}
\newcommand{\Low}{{\mathbb L}}

\newcommand{\F}{{\mathcal F}}

\newcommand{\IE}{{\it i.e.}, }
\newcommand{\EG}{{\it e.g.}, }


\newcommand{\CP}{{\C}P}
%
%

%

%

\newcommand{\e}{\varepsilon}
\newcommand{\bfo}{{\bf{0}}}
\newcommand{\vphi}{\varphi }
\newcommand{\T}{^t\!}
%
%


%
\newcommand{\ds}{\displaystyle}
%
%

%
%

%
%
%
%
%
%
%
%
%
%
%
%
%
%
\title[Lefschetz fibrations]
{
{\large Lefschetz fibrations 
on the Milnor fibers of cusp and simple elliptic singularities}
}
%
%
%
%
\author[N Kasuya]{Naohiko Kasuya}
\address{Department of Mathematics, Faculty of Science, Hokkaido University, 
North 10, West 8, Kita-ku, Sapporo, Hokkaido 060-0810, Japan.}
\email{nkasuya@math.sci.hokudai.ac.jp}
%
%
%
\author[H Kodama]{Hiroki KODAMA}
\address{International Institute for Sustainability with Knotted
Chiral Meta Matter (SKCM2), Hiroshima University, 2-313 Kagamiyama,
Higashi-Hiroshima-shi, Hiroshima 739-0046, Japan.
\newline
\hspace*{\parindent}
Center for Interdisciplinary Theoretical and Mathematical Sciences
Program (iTHEMS), RIKEN, 2-1 Hirosawa, Wako-shi, Saitama 351-0198,
Japan}
\email{kodamahiroki@gmail.com}
%
%
%
\author[Y Mitsumatsu]{Yoshihiko Mitsumatsu}
\address{Department of Mathematics, Chuo University, 
1-13-27 Kasuga, Bunkyo-ku, 
Tokyo 112-8551, Japan}
\email{yoshi@math.chuo-u.ac.jp}
%
%
%
\author[A Mori]{Atsuhide MORI}
\address{Department of Mathematics, Osaka Dental University,
8-1 Kuzuha-Hanazono, Hirakata, Osaka 573-1121, Japan} 
\email{mori-a@cc.osaka-dent.ac.jp}
\subjclass[2020]{Primary~ 57R17, 32S55, 32S25, 57R30
}
\keywords{Lefschetz fibration, Lagrangian torus fibration,  
cusp singularities, simple elliptic singularities, Milnor fibration, 
Reeb foliation, Lawson-type foliations}

\begin{document}

\maketitle

\begin{abstract}
We show that the total space of the Milnor fibration 
associated with any cusp or simple elliptic singularity in complex three variables 
admits an $S^1$-parametric genus-one Lefschetz fibration structure over the $2$-disk. 
As a consequence, we demonstrate that 
the Lawson type foliations on $S^5$ associated with such singularities 
can be regarded as the pullback of the Reeb foliation on $S^3$. 
This enables us to provide an alternative proof 
of a previous result by the third author,  
which states that every Lawson type foliation admits a leafwise symplectic structure.  
Also we see that 
a pair of such Milnor fibers can be glued together 
along boundary into a closed oriented 4-manifold 
exactly when the pair corresponds to one of the 
ten extended strange duality pairs among the cusp singularities.  
This gluing is compatible with the Lefschetz fibrations 
and the resultant 4-manifold is diffeomrphic to a K3 surface.  
\end{abstract}
\setcounter{section}{-1}

\tableofcontents
\section{\large Introduction}

Our main result in the present article is the following.   
For the precise statement, see Theorem~\ref{precise statement}. 
\\

\noindent
{\bf Main Theorem}\quad 
{\em The Milnor fibration of a simple elliptic or cusp singularity 
in complex three variables admits an $S^1$-family of Lefschetz fibrations 
over the 2-disk whose regular fibers are diffeomorphic to the 2-torus. 
}
\\

In this article, we explicitly construct a map that realizes the Lefschetz fibration 
as a Lagrangian torus fibration. 
The map is, in fact, obtained as the restriction to each Milnor fiber of a map defined on $\C^3$. 
Consequently, all the Milnor fibers are simultaneously equipped with genus-one Lefschetz fibrations. 
This is what we mean by an ``$S^1$-family of Lefschetz fibrations."
On a single Milnor fiber of the cusp sigularities,  
the Lagrangian construction is 
also intrinsically given by Hacking and Keating \cite{HK} 
by arguments which are more of algebro-geometric nature than 
those in the present article.    
In order to construct such fibrations in an $S^1$-parametric way, 
we need a more topological method. 
\\

Now we consider a pair of cusp singularities whose monodromies, viewed as $T^2$-bundles monodromies of their links, are conjugate in $\mathit{SL}(2; \Z)$ to each other's inverses. 
In this case, the boundaries of the corresponding Milnor fibers are orientation-reversingly diffeomorphic to each other,  
so we obtain a closed orientable $4$-manifold by gluing them along their boundaries. 
\\

\noindent
{\bf  Theorem} (Smooth Decomposition of K3 Surface, Proposition~\ref{Prop:DualMonodromies} \& Theorem~\ref{Thm:MainCorollary})
\quad {\em 
For any two cusp singularities in complex three variables, 
the monodromies of the boundary $2$-torus bundles of the corresponding Milnor fibres 
are conjugate in $\mathit{SL}(2; \Z)$ to each other's inverses if and only if the singularities form an extended strange duality pair. 
Moreover, the $4$-manifold obtained by gluing the two Milnor fibers is diffeomorphic to a K3 surface for every such pair, regardless of the matching of their boundaries.} 
\\

The extended strange duality is the ten pairs of cusp singularities 
related to Arnold's strange duality among the exceptional unimodal singularities (see \S 4.2).  
It was noted in \cite{Mi2} that, 
in certain cases of the extended strange duality pairs, 
the two Milnor fibers can be glued together along their boundaries to become a closed symplectic 4-manifold 
by modifying their original exact symplectic structures to non-exact ones. 
Thus a natural problem was raised;   
\emph{What are these closed symplectic 4-manifolds?} 
Ue realized from the computation of 
the cohomology ring 
that the resultant manifold is homotopically equivalent to,  
and thus at least homeomorphic to a K3 surface. 
In this paper, we show that they are in fact diffeomorphic to 
a smooth K3 surface thanks to the Lefschetz fibrations.
\\

Like our construction of the $S^{1}$-parametric 
Lefschetz fibrations,  
our arguments on the decomposition of K3 surface 
are smooth topological. 
Each of so-called singular \emph{Hirzebruch-Inoue surfaces} 
has two cusp singularities.   
Nakamura (\cite{Nakamura1}, see also \cite{Nakamura2} and \cite{Lo}) 
found that each extended strange duality pair of cusp singularities 
appears as the two singularities on a single Hirzebruch-Inoue surface.  
He also showed that a singular Hirzebruch-Inoue surface 
admits a flat deformation to a K3 surface 
exactly when its two singularities form one of 
the extended strange duality pairs \cite{Nakamura1, Nakamura3}. 
The above smooth decomposition theorem captures  
the purely smooth topological aspect of this phenomenon.
In \S~\ref{subsec:Inose},  we give a possibly related 
example of a decomposition of a Kummer surface along a smoothly embedded Sol-manifold. 
\\

%
%

As another application, 
we give an alternative proof of the following theorem, which is 
due to the previous works  \cite{Mi1, Mi2} of the third author.  
This was the original motivation of the present work. 
\\

\noindent
{\bf Theorem}  \cite{Mi1, Mi2} \quad
{\em The Lawson type foliation of codimension one 
on the 5-sphere associated with 
a simple elliptic singularity or a cusp singularity 
admits a leafwise symplectic structure.  }
\\

As our construction in the Main Theorem 
is done on the total space of the Milnor fibrations, 
the existence of Lefschetz fibrations is obtained 
in an $S^{1}$-parametric way and 
implies the following theorem. 
Extending our construction to cusp singularities or 
simple elliptic singularities of complete intersections 
is left as an important problem in the future.  
\\

\noindent
{\bf Theorem} (Foliated Lefschetz Fibration, Theorem~\ref{Thm:FolLF})\quad 
{\em The Lawson type foliation on the 5-sphere 
which appears in the above theorem  
admits a foliated Lefschetz fibration structure 
over 
the standard Reeb foliation over the 3-sphere, 
with regular fibers diffeomorphic to the 2-torus.  }
\\

This result together with the foliated version of Gompf's theorem 
\cite{Gompf, GS} gives an alternative proof of the above theorem.  
For the precise definition of a foliated Lefschetz fibration,  
see Section \ref{Section;Foliated LF}. 
\\

In the proof of Main Theorem, 
our construction of Lefschetz fibrations  
is 
{explicit} to a certain degree.  
The basic model of the fibration map originates 
in the absolute value moment maps 
which describe certain contact structures, 
which is originally due to the fourth author A. Mori  
and developed by the first author N. Kasuya \cite{Kasuya} and R. Furukawa. 
What we need is a Lefschetz fibration with closed fibers on 
a compact Stein surface.  Therefore the fibration 
is not at all holomorphic 
with respect to the original complex structure of the Milnor fibers.  
Moreover, the total space is fixed, which is a Milnor fiber.  
Therefore we first look for a good candidate for the map 
and then rectify it to be a Lefschetz fibration.  
So we can not start from a holomorphic map as a model nor 
rely on some relations in the mapping class group of the fiber.  
One of the key steps is to confirm that the critical points 
of the constructed map are of genuine Lefschetz type.  
From this point of view, we obtained two different methods.  
In this article, we deform the model map to a Lagrangian torus 
fibration.  
Then Eliasson's work \cite{Eliasson} on the critical points 
in integrable Hamiltonian systems enables us to verify the critical points to be  
of Lefschetz type. 

The construction of the desired Lefschetz fibrations  
is also possible by analyzing the 2-jets of the model map at critical points 
and deforming it to a map 
so as to have the genuine Lefschetz type critical points. 
This method can be considered as a particular case  
of a study of the space of 2-jets 
of isolated critical points, which is explained together with the construction 
in a forthcoming paper \cite{K2M2}.  
\\

The article is organized as follows. 
In \S 1 
the main objects of the article, namely,  
the simple elliptic and cusp singularities are reviewed. 
Also the principal notion of the article, Lefschetz fibrations,  
is recalled. 
In \S 2 
we introduce the absolute value moment map 
and Lagrangian torus fibrations.  
Based on these preliminaries, the Main Theorem is precisely stated 
and is proved. 

Then in the subsequent sections, some applications are presented. 
The structure of the Milnor lattices 
and the monodromy of the Milnor fibrations of the singularities 
are described in \S 3 by looking at the Lefschetz fibration.  
This is a reformulation of the results by Gabrielov \cite{Gabrielov}.  

In \S 4 an application of the main results to K3 surfaces is presented. 
First, the strange duality and Hirzebruch-Inoue surfaces are reviewed,  
then 
Theorem (Smooth Decomposition of K3 Surface)  
is presented in more detail. 
Also, a decomposition of 
the Inose fibrationan, an elliptic fibration of a K3 surface, is constructed. 
In \S 5 the application of the main result to the Lawson type foliations 
on the 5-sphere is presented. 
\\

\noindent
Acknowledgements:

The authors are grateful to Francisco Presas for suggesting 
to prove the existence of leafwise symplectic structure 
by that of foliated Lefschetz fibration.  
Also, they are grateful to Masaaki Ue for 
lots of important information and suggestions 
on the topology of elliptic surfaces.

\section{\large Singularities and Lefschetz fibrations} 
Throughout this paper, $(x, y, z)$ denote the coordinates on $\C^3$. 
\subsection{Simple elliptic and cusp singularities}~\label{sing}

First we recall some special types of isolated hypersurface singularities in $\C^3$. 
Each of the following polynomials have the only singularity at the origin $\bfo$. 
\begin{eqnarray*}
\tilde{E_6} &\colon & x^3+y^3+z^3+axyz, \; a^3+27\ne 0,\\
\tilde{E_7} &\colon & x^2+y^4+z^4+axyz, \; a^4-64\ne 0,\\
\tilde{E_8} &\colon & x^2+y^3+z^6+axyz, \; a^6-432\ne 0. 
\end{eqnarray*}
These singularities are called simple elliptic singularities. 
The following polynomial also defines an isolated singularity at the origin, which is called a cusp singularity:   
\begin{eqnarray*}
T_{pqr} \colon  x^p+y^q+z^r+axyz, \; a \ne 0,\; \dfrac{1}{p}+\dfrac{1}{q}+\dfrac{1}{r}<1. 
\end{eqnarray*} 
Simple elliptic singularities and cusp singularities are of different types  
in the sense of Arnold's classification of hypersurface singularities. 
However, when the parameter $a$ is a sufficiently large positive number, 
they can be summarized into the single form 
\begin{eqnarray*}
x^p+y^q+z^r+axyz, \;\; \dfrac{1}{p}+\dfrac{1}{q}+\dfrac{1}{r}\leq 1. 
\end{eqnarray*}

Next we review the precise definitions. 
Generally, simple elliptic and cusp singularities need not to be hypersurface singularities. 
They are formally defined by using the minimal resolution and its exceptional set. 

\begin{definition}[simple elliptic singularity]
Let $(S, \bfo)$ be a normal surface singularity and $\pi \colon \tilde{S}\to S$ its minimal resolution. 
$(S, \bfo)$ is called a simple elliptic singularity if the exceptional set $E=\pi^{-1}(\bfo)$ is an elliptic curve. 
\end{definition}

We note that the link of the singularity is diffeomorphic to 
the boundary of a tubular neighborhood of the exceptional set. 
Hence, the link of an simple elliptic singularity is diffeomorphic to 
the circle bundle over the $2$-torus with the Euler class $-k$, 
where $-k$ is the self-intersection number of the elliptic curve $E$. 
In other words, it is diffeomorphic to the $T^2$-bundle over the circle 
with the monodromy $\begin{pmatrix} 1&0 \\ k&1\end{pmatrix}$. 
The following theorem shows when a simple elliptic singularity becomes a hypersurface singularity. 

\begin{theorem}[Saito \cite{S}]
A simple elliptic singularity can be embedded in $\C^3$   
if and only if it is analytically equivalent to $\tilde{E_6}, \tilde{E_7}$ or $\tilde{E_8}$. 
\end{theorem}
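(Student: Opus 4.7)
The plan is to classify simple elliptic singularities $(S, \bfo)$ by the single invariant $k := -E^2 \ge 1$, where $E$ is the exceptional elliptic curve of the minimal resolution $\pi \co \tilde S \to S$, and to establish the embedding-dimension formula
\[
\mathrm{edim}(S, \bfo) = \dim_\C \m/\m^2 = \max(3, k).
\]
Granted this formula, embeddability in $\C^3$ is equivalent to $k \le 3$, and the three cases $k = 3, 2, 1$ will be identified with $\tilde E_6, \tilde E_7, \tilde E_8$ respectively.

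For the ``if'' direction, I would construct the minimal resolution of each of the three defining polynomials explicitly. All three are weighted homogeneous, with weights $(1,1,1), (2,1,1), (3,2,1)$ and weighted degrees $3, 4, 6$. An ordinary blow-up of $\bfo \in \C^3$ resolves $\tilde E_6$, with exceptional divisor the smooth plane cubic $V(x^3+y^3+z^3+axyz) \subset \CP^2$ of self-intersection $-3$. For $\tilde E_7$ and $\tilde E_8$, a weighted blow-up followed by toric resolution of the residual cyclic quotient singularities and contraction of the resulting $(-1)$-rational components yields a single smooth elliptic exceptional curve of self-intersection $-2$ and $-1$ respectively; smoothness and genus $1$ are certified by the genus formula on the weighted projective plane under the non-degeneracy condition on $a$.

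For the ``only if'' direction, I would derive the formula $\mathrm{edim} = \max(3, k)$ via the graded ring. Using that $E$ is the fundamental cycle for a simple elliptic singularity, the graded ring $\bigoplus_n \m^n/\m^{n+1}$ is linked to $\bigoplus_n H^0(E, L^n)$, where $L := \mathcal{O}_{\tilde S}(-E)|_E$ has degree $k$, through the short exact sequences
\[
0 \to \mathcal{O}_{\tilde S}(-(n+1)E) \to \mathcal{O}_{\tilde S}(-nE) \to \mathcal{O}_E(-nE|_E) \to 0
\]
combined with the vanishing $R^1\pi_* \mathcal{O}_{\tilde S}(-nE) = 0$ (Grauert--Riemenschneider, since $-E$ is $\pi$-nef). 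Riemann--Roch on the elliptic curve $E$ gives $\dim H^0(L^n) = nk$ for $n \ge 1$. Comparing this Hilbert function with that of $\mathrm{Sym}^\bullet(\m/\m^2)$ shows $\mathrm{edim} = k$ when $k \ge 3$, whereas for $k \le 2$ the algebra requires extra generators of higher degree (one generator in degree $2$ when $k = 2$, two extra generators in degrees $2$ and $3$ when $k = 1$, matching the weights of $\tilde E_7$ and $\tilde E_8$), forcing $\mathrm{edim} = 3$. Thus embedding in $\C^3$ forces $k \le 3$, and in each case the induced $\C^*$-action from the weighted-homogeneous structure, together with Gorensteinness, pins down the analytic normal form. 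The main technical obstacle is the interplay between the valuative ideal $\pi_* \mathcal{O}_{\tilde S}(-nE)$ and the honest power $\m^n$ for small $n$, which is precisely what causes the embedding dimension to saturate at $3$; everything else reduces to Riemann--Roch on an elliptic curve and standard normal-form analysis of weighted homogeneous hypersurface singularities.
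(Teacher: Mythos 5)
The paper gives no proof of this statement at all --- it is quoted from Saito --- so your proposal has to stand on its own. The numerical half of your plan is sound in outline: since $E$ is reduced and is the whole exceptional fibre, $\m=\pi_*\mathcal{O}_{\tilde S}(-E)$; your exact sequence together with $R^1\pi_*\mathcal{O}_{\tilde S}(-2E)=0$ gives $\m/\pi_*\mathcal{O}_{\tilde S}(-2E)\cong H^0(E,L)$ of dimension $k$, and $\m^2\subseteq\pi_*\mathcal{O}_{\tilde S}(-2E)$ already yields $\dim\m/\m^2\ge k$, so embeddability in $\C^3$ forces $k\le 3$ without even touching the projective-normality issue you flag as the main obstacle (that issue only matters if you want the exact value $\mathrm{edim}=\max(3,k)$, which the theorem does not require). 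The ``if'' direction by weighted blow-up and adjunction is likewise routine; in fact for the weights $(1,1,1)$, $(2,1,1)$, $(3,2,1)$ the exceptional curve avoids the singular points of the weighted projective plane, so no residual quotient singularities or $(-1)$-curves appear on the proper transform, and minimality is automatic because the unique exceptional curve is elliptic.

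The genuine gap is the final clause of your ``only if'' direction: ``the induced $\C^*$-action from the weighted-homogeneous structure, together with Gorensteinness, pins down the analytic normal form.'' An abstract simple elliptic germ embedded in $\C^3$ comes with no weighted-homogeneous structure; producing that $\C^*$-action is precisely the hard content of Saito's theorem, and the Hilbert-function computation cannot supply it, since it constrains only dimensions of graded pieces and not the analytic type of the germ. What is needed is that the analytic type of a simple elliptic singularity is determined by the pair $(E,L)$ with $L=\mathcal{O}_{\tilde S}(-E)|_E$ of degree $k$, i.e.\ that the germ is the cone $\mathrm{Specan}\,\bigoplus_{n\ge0}H^0(E,L^n)$; this rests on a linearization (formal principle) theorem of Grauert for a neighborhood of $E$ in $\tilde S$, whose obstruction groups vanish because $T_E$ is trivial and $\deg L^n>0$ --- none of which appears in your sketch. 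Even granting the cone structure, to land in the three stated one-parameter families you still need the classical normal forms for elliptic curves polarized in degree $3$, $2$, $1$: every smooth plane cubic is projectively equivalent to a member of the Hesse pencil $x^3+y^3+z^3+axyz$, $a^3+27\ne0$, with the analogous statements for the quartic in $\Proj(2,1,1)$ and the Weierstrass sextic in $\Proj(3,2,1)$, including the fact that each family realizes every $j$-invariant. As written, your argument establishes only the inequality $k\le3$, not the asserted analytic equivalence with $\tilde E_6$, $\tilde E_7$, $\tilde E_8$.
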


Since they satisfy $E^2=-3$, $-2$ and $-1$, 
their links are $T^2$-bundles over the circle with the monodromy 
$\begin{pmatrix} 1&0 \\ 3&1\end{pmatrix}$, 
$\begin{pmatrix} 1&0 \\ 2&1\end{pmatrix}$ and 
$\begin{pmatrix} 1&0 \\ 1&1\end{pmatrix}$, respectively. 

\begin{definition}[cusp singularity]
Let $(S, \bfo)$ be a normal surface singularity and $\pi \colon \tilde{S}\to S$ its minimal resolution. 
$(S, \bfo)$ is called a cusp singularity if the exceptional set $E=\pi ^{-1}(\bfo)$ is 
a cycle $C=C_1+\cdots +C_n$ of non-singular rational curves $C_i$ ($1\leq i \leq n$, $n\geq 2$) 
or a single rational curve $C_1$ with a node.   
\end{definition}

Here a cycle means that if $n\geq 3$, for any $i$ with $1\leq i\leq n$,  
$C_i$ intersects with $C_{i+1}$ at only one point transversely and $C$ has no other crossing, 
where $C_{n+1}$ denotes $C_1$, and if $n=2$, $C_1$ and $C_2$ intersect transversely at distinct two points. 
Now we set $b_i=-C_i^2$ if $n\geq 2$, and $b_1=2-C_1^2$ if $n=1$. 
Then it follows that $b_i\geq 2$ for all $i$ and $b_i\geq 3$ for some $i$, 
since the intersection matrix $(C_iC_j)_{1\leq i, j\leq n}$ must be negative definite by Grauert's criterion.   
Then the link of a cusp singularity is diffeomorphic to 
the $T^2$-bundle over the circle with the hyperbolic monodromy 
$$A=\begin{pmatrix}0&1\\-1&b_1\end{pmatrix}\cdots\begin{pmatrix}0&1\\-1&b_n\end{pmatrix}.$$ 
It is also known which cusp singularities are realized as hypersurface singularities in 3 variables. 

\begin{theorem}[Karras \cite{Kar}]
A cusp singularity can be embedded in $\C^3$ 
if and only if it is analytically equivalent to one of $T_{pqr}$. 
\end{theorem}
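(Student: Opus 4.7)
The plan is to handle the two implications separately. For the ``if'' direction I must show that the hypersurface $\{x^p+y^q+z^r+axyz=0\}$ with $\frac{1}{p}+\frac{1}{q}+\frac{1}{r}<1$ and $a\ne 0$ is indeed a cusp singularity in the sense of the definition just above; for the ``only if'' direction I must rule out every abstract cusp singularity whose local ring cannot be generated by three elements, and then pin down the analytic type of those that remain.

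For the ``if'' direction I would write down an explicit resolution by a sequence of toric or weighted blow-ups adapted to the Newton polyhedron of $f=x^p+y^q+z^r+axyz$. Weighted blow-ups along the three coordinate axes, with weights dictated by the exponents $(p,q,r)$, produce three chains of rational curves whose self-intersections are read off from the Hirzebruch--Jung continued fractions of $p$, $q$ and $r$ relative to the apex weights. The mixed term $axyz$ is what glues these three chains into a single closed cycle rather than a tree, while the strict inequality $\frac{1}{p}+\frac{1}{q}+\frac{1}{r}<1$ guarantees that the resulting intersection form is negative definite and that at least one self-intersection is strictly less than $-2$. This identifies the exceptional divisor with a cycle of rational curves, which is the content of being a cusp.

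For the ``only if'' direction the main tool is the theory of minimally elliptic singularities, which applies because cusps are Gorenstein. Laufer's formula gives the embedding dimension as
\begin{equation*}
\mathrm{emb}(S,\bfo)\;=\;\max\bigl\{\,3,\;-Z^{2}\,\bigr\},
\end{equation*}
where $Z$ is the fundamental cycle of the minimal resolution. A short computation using the cycle data $(-b_1,\dots,-b_n)$ and the incidence pattern yields $Z\cdot C_i=-b_i+2$, hence $-Z^{2}=\sum_{i=1}^{n}(b_i-2)$. Since a hypersurface singularity in $\C^3$ has embedding dimension exactly $3$, this forces the combinatorial constraint $\sum_i(b_i-2)\le 3$, which cuts out only finitely many families of admissible cycles. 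For each admissible cycle I would exhibit a matching $T_{pqr}$ whose resolution graph coincides with it (this is where the explicit resolution from the first step is reused, read in reverse), and then argue that any two hypersurface germs in $(\C^3,\bfo)$ with the same resolution cycle must be analytically equivalent.

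The hard part will be this last analytic identification. A resolution cycle does not determine a cusp up to analytic isomorphism, since cusps have positive-dimensional moduli in general, so matching only the dual graph is not enough. What must save the argument is that the single parameter $a$ in the normal form already absorbs those analytic moduli that can be realised by a hypersurface equation in $\C^3$: among hypersurface deformations of a fixed $T_{pqr}$ the family $\{x^p+y^q+z^r+a\,xyz\}$ is versal modulo the obvious rescalings $x\mapsto\lambda_1 x$, $y\mapsto\lambda_2 y$, $z\mapsto\lambda_3 z$. Establishing such a versality statement uniformly across all the cycle shapes produced by the case analysis, and excluding hypothetical extra deformation directions compatible with the embedding-dimension bound, is the technical heart of Karras's theorem.
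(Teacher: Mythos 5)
The paper itself offers no proof of this statement (it is quoted from Karras), so your sketch has to stand on its own. Its skeleton is reasonable up to the last step: the ``if'' direction via a Newton-polyhedron/toric resolution is standard; cusps are minimally elliptic, so Laufer's formula $\mathrm{emb\,dim}=\max\{3,-Z^2\}$ applies; the fundamental cycle of a cusp is the reduced cycle $\sum_i C_i$, giving $-Z^2=\sum_i(b_i-2)$; and the hypersurface condition indeed becomes $\sum_i(b_i-2)\le 3$. (Be careful with ``finitely many families'': arbitrarily long strings of $(-2)$-curves are allowed, so there are infinitely many admissible cycles, falling into finitely many shapes; matching each of them with the resolution cycle of a specific $T_{pqr}$ is genuine bookkeeping that you leave implicit.)

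The genuine gap is the final analytic identification, and you have misdiagnosed it. Your premise that ``a resolution cycle does not determine a cusp up to analytic isomorphism, since cusps have positive-dimensional moduli in general'' is false: cusp singularities are taut (Laufer, \emph{Taut two-dimensional singularities}, 1973); equivalently, every cusp is isomorphic to the Hilbert modular cusp determined by the continued-fraction data of its cycle, exactly the description recalled in \S 4 of this paper. Moduli of the kind you worry about occur for simple elliptic and more general minimally elliptic singularities, not for cusps. Consequently the ``technical heart'' you propose --- a versality statement for the family $x^p+y^q+z^r+axyz$ modulo rescalings --- is both unnecessary and not a correct mechanism: versality of a family does not by itself show that two germs with the same resolution cycle are analytically equivalent, and you leave the claim entirely unproven. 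As a side remark, in the cusp range the parameter $a\neq 0$ is not a modulus at all: rescaling $x\mapsto\lambda_1x$, $y\mapsto\lambda_2y$, $z\mapsto\lambda_3z$ with $\lambda_1^p=\lambda_2^q=\lambda_3^r=c$ multiplies $a$ by $c^{\frac{1}{p}+\frac{1}{q}+\frac{1}{r}-1}$, which sweeps out all of $\C^{*}$ precisely because $\frac{1}{p}+\frac{1}{q}+\frac{1}{r}<1$; it is only in the simple elliptic case that $a$ carries a modulus. The correct completion of your argument is therefore: carry out the cycle-matching explicitly, and then invoke tautness of cusps to conclude the analytic equivalence, rather than the versality route you outline.
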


By calculating the minimal resolution and plumbing along the cyclic graph, 
we can see that the link of $T_{pqr}$ singularity is diffeomorphic to 
the $T^2$ bundle over the circle with the hyperbolic monodromy
$$A_{p,q,r}=\begin{pmatrix}r-1&-1\\1&0\end{pmatrix}
\begin{pmatrix}q-1&-1\\1&0\end{pmatrix}
\begin{pmatrix}p-1&-1\\1&0\end{pmatrix},$$ 
(see \cite{Lau}, \cite{Neumann}, \cite{EW}, \cite{Kasuya}). 
Moreover, Neumann showed the following characterization of simple elliptic and cusp singularities. 
\\

\begin{theorem}[\cite{Neumann}]~\label{chara}
A singularity link fibers over $S^1$ if and only if it is either the link of a simple elliptic or cusp singularity. 
\end{theorem}

\subsection{Milnor's fibration theorem}\label{MilnorFibration}

Let $(z_1, \ldots, z_n)$ be the coordinates on $\C^n$ and $f(z_1, \ldots, z_n)$ 
be a polynomial of complex $n$-variables with isolated critical point 
at the origin $\bfo$. Then, the zero level set $V(0):=f^{-1}(0)$ is an algebraic variety with isolated singularity at $\bfo$.  
For a sufficiently small positive number $\e $, 
the sphere $S^{2n-1}_{\e}$ of radius $\e$ centered 
at the origin transversely intersects with $V(0)$. 

\begin{definition}[Milnor radius, singularity link]
The Milnor radius $\e _f$ of $f$ is the supremum of such $\e$'s, namely, 
$$\e_f=\sup \left\{\e \; \middle| \;  S^{2n-1}_{r} \text{ is transverse to } 
V(0) \text{ for any $0<r \leq \e$}\right\}. $$
The intersection $L:=V(0)\cap S^{2n-1}_{\e}$ ($\e <\e _f$) 
is called the link of the singularity. 
\end{definition}

\begin{theorem}[Milnor \cite{M}]~\label{Milnor}
For any positive number $\e$ with $\e<\e_f$, the map 
$$\frac{f}{|f|}\colon S^{2n-1}_{\e}\setminus L\to S^1$$ 
is a fiber bundle over the circle. 
Moreover, there exists 
a positive number $\delta $ such that if  
$0\leq |t| \leq \delta $, then $V(t):=f^{-1}(t)$ 
transversely intersects with $S^{2n-1}_{\e}$. 
For such $\e$ and $\delta $, the map 
$$f|_{f^{-1}(S^1_{\delta })\cap D^{2n}_{\e }}\colon 
f^{-1}(S^1_{\delta })\cap D^{2n}_{\e }\to S^1_{\delta }$$
is also a fiber bundle over the circle, which is isomorphic to $\frac{f}{|f|}$ as a fiber bundle. 
\end{theorem}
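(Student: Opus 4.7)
The plan is to follow Milnor's original strategy, which has four logically distinct stages, with the first and last being the most delicate.

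First, I would show that $\arg f = f/|f|$ is a \emph{submersion} on $S^{2n-1}_\e \setminus L$. A point $z$ fails to be a regular point of $\arg f|_{S^{2n-1}_\e}$ exactly when the real vectors $i\overline{\nabla f(z)}$ and $z$ span a real plane whose projection onto the tangent space of the sphere is degenerate, equivalently when the tangent direction of $\arg f$ along the sphere vanishes. To rule this out for $\e < \e_f$ one invokes the \emph{Curve Selection Lemma}: if such critical points accumulated at $\bfo$, there would exist a real analytic arc $\gamma(s)$ approaching $\bfo$ along which this proportionality holds and $f(\gamma(s))\to 0$. A direct computation of $\frac{d}{ds}|f(\gamma(s))|$ along such an arc shows it must vanish to higher order than $|f(\gamma(s))|$, producing a contradiction. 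This is the main analytic obstacle; everything downstream is more formal.

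Second, with the submersion property in hand, I would construct a smooth vector field $v$ on $S^{2n-1}_\e\setminus L$ that is tangent to the sphere and satisfies $d(\arg f)(v)\equiv 1$. Locally, existence is immediate from the submersion property; globally one patches with a partition of unity. The integral curves of $v$ need not be complete on $S^{2n-1}_\e\setminus L$ as a whole (they escape toward $L$), but on each fiber $(\arg f)^{-1}(\theta_0)$ the flow is defined for all time in the $\arg f$-direction, and this gives local trivialization over arcs of $S^1$, hence the first fiber bundle.

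Third, for the second fibration, I would use compactness of $S^{2n-1}_\e$ together with openness of transversality: since $V(0)$ meets $S^{2n-1}_\e$ transversely, there exists $\delta>0$ such that $V(t)$ also meets $S^{2n-1}_\e$ transversely for all $|t|\leq \delta$. Shrinking $\delta$ further ensures that $f$ has no critical points in $f^{-1}(S^1_\delta)\cap D^{2n}_\e$. Then $f$ restricted to this set is a proper submersion onto $S^1_\delta$, and Ehresmann's fibration theorem supplies the bundle structure.

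Finally, to identify the two fibrations, I would construct a diffeomorphism from $f^{-1}(S^1_\delta)\cap D^{2n}_\e$ to a thickening of $S^{2n-1}_\e\setminus(\text{tubular nbd of }L)$ by flowing along a vector field adapted to the level sets of $|f|$ while preserving $\arg f$. Here the subtle point, and the one that takes the most care, is uniform control near $L$: the naive rescaling of $v$ degenerates, so one multiplies by a cut-off in $|f|$ to obtain a complete field whose flow carries fibers of $f|_{f^{-1}(S^1_\delta)\cap D^{2n}_\e}$ onto fibers of $f/|f|$ compatibly with the projections to $S^1$. This yields an isomorphism of fiber bundles over the circle and completes the theorem.
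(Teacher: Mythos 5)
The paper does not prove this statement at all: it is quoted verbatim from Milnor's book \cite{M}, so the only meaningful comparison is with Milnor's classical proof, which is indeed the strategy you outline. Your stage 1 is the right idea (Milnor's Lemmas 4.1--4.3: the critical-point condition makes $\overline{\nabla \log f(z)}$ proportional to $z$, and the Curve Selection Lemma plus a comparison of leading terms of the resulting real-analytic arc rules this out), and stage 3 is correct and standard: transversality is open, the tube $f^{-1}(S^1_{\delta})\cap D^{2n}_{\e}$ is compact, and Ehresmann applies since the restriction to the spherical boundary is again a submersion by the transversality of $V(t)$ with $S^{2n-1}_{\e}$. (One small caveat: the curve-selection argument as you state it only gives the conclusion for all sufficiently small $\e$; to get it for \emph{every} $\e<\e_f$, as the statement asserts, one still needs the standard product/isotopy argument between two admissible radii.)

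The genuine gap is in your stage 2. Because $S^{2n-1}_{\e}\setminus L$ is not compact and $f/|f|$ is not proper, being a submersion does not by itself give a fiber bundle, and a vector field $v$ with $d(\arg f)(v)\equiv 1$ produced by an arbitrary partition-of-unity patching does \emph{not} have the property you assert: a trajectory can run into $L$ (where $f=0$ and the field is undefined) after an arbitrarily small advance of $\arg f$, so the sentence ``on each fiber the flow is defined for all time in the $\arg f$-direction'' is exactly the statement that requires proof, and it fails for a generic such $v$. Milnor's proof resolves this by choosing the field $w$ with the \emph{additional} requirement that $|d(\log|f|)(w)|$ be bounded (by $1$, say), which is possible precisely because of the $\pi/4$-type estimate on the argument of the proportionality factor coming from the curve-selection step; then $\log|f|$ changes at a bounded rate along trajectories, so they stay a definite distance from $L$ over any bounded $\arg f$-interval, and the trivializations over arcs of $S^1$ exist. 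This quantitative control is the heart of the theorem; your stage 4 acknowledges the need for uniform control near $L$ when comparing the two fibrations, but stage 2, where that control is indispensable, simply asserts the conclusion. With the corrected choice of vector field, the rest of your outline (including the identification of the tube fibration with the open-book fibration) goes through along Milnor's lines.
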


\begin{definition}[Milnor fibration, Milnor fiber, Milnor tube]
The fiber bundle $f|_{f^{-1}(S^1_{\delta })\cap D^{2n}_{\e }}$ 
is called the Milnor fibration of $f$, 
and each fiber 
$$F_{\theta }:=f^{-1}(\delta e^{i\theta })\cap D^{2n}_{\e}$$ 
is called its Milnor fiber. 
Moreover, $f^{-1}(D^2_{\delta })\cap D^{2n}_{\e}$ 
is called the Milnor tube of $f$. 
\end{definition}

\subsection{The Milnor fibers of simple elliptic and cusp singularities}~\label{Milnor fiber}
In the following, we put 
$$f(x,y,z)=x^p+y^q+z^r+axyz \; 
(a\ne 0, \; \frac{1}{p}+\frac{1}{q}+\frac{1}{r}\leq 1), $$
$M=\max \left\{p, q, r \right\}$ and $V_a(\e, w)=f^{-1}(w)\cap D^6_{\e}$. 
By \cite{Kasuya-Mori}, if $a$ is a positive real number greater than $M$, 
then the Milnor radius is greater than $1$. 
Thus we may assume that $\e=1$. 
Now we prove the following lemma to estimate the size of the Milnor tube. 
\vspace{4pt}
\begin{lemma}~\label{thm: tube}
Let the positive real number $a$ and the complex number $t$ satisfy the conditions $a>12M$ and $0<|w|<1$. 
Then, $V_a(1,w)$ is a Milnor fiber. 
\end{lemma}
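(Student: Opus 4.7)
The proof reduces to verifying the two hypotheses of Milnor's fibration theorem for the pair $(\e,\delta)=(1,1)$, namely (a) $\e_f>1$ and (b) $V_a(s)\pitchfork S^5_1$ for every $s$ with $|s|<1$. Part (a) holds by the estimate of \cite{Kasuya-Mori} since $a>12M>M$, so the substantive work is (b).

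Translating transversality into a Lagrange-multiplier condition, a point $p=(x,y,z)\in S^5_1$ is non-transverse to $V_a(f(p))$ if and only if there exists $\mu\in\C$ with $\bar x=\mu f_x$, $\bar y=\mu f_y$, $\bar z=\mu f_z$ (since the complex normal to $V_a(f(p))$ is spanned by $\overline{\nabla f(p)}$ while the real normal to $S^5_1$ is $p$ itself, and failure of transversality is the condition that these are complex-parallel). Multiplying these equations respectively by $x,y,z$, solving for $x^p,y^q,z^r$, and using $|x|^2+|y|^2+|z|^2=1$ yields the identity
\[
f(p)=\frac{1}{\mu}\left(\frac{|x|^2}{p}+\frac{|y|^2}{q}+\frac{|z|^2}{r}\right)+(1-s)\,axyz,\qquad s:=\frac{1}{p}+\frac{1}{q}+\frac{1}{r},
\]
so that, writing $\alpha:=\frac{|x|^2}{p}+\frac{|y|^2}{q}+\frac{|z|^2}{r}\geq 1/M$ and $D:=axyz$, a direct expansion of $|f(p)|^2$ with $|\mathrm{Re}(\mu D)|\leq |\mu||D|$ gives
\[
|f(p)|\;\geq\;\bigl|\,\alpha/|\mu|-(1-s)|D|\,\bigr|.
\]
Thus (b) reduces to showing that at every such critical point with $f(p)\neq 0$ this right-hand side is at least $1$.

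I would split into two cases on $|D|$. When $|D|\geq 4$, the direct triangle bound $|f(p)|\geq|D|-|x|^p-|y|^q-|z|^r\geq 4-3=1$ concludes already. When $|D|<4$, so that $|xyz|<4/a<1/(3M)$, taking squared moduli in the critical equations yields $|\nabla f(p)|^2=1/|\mu|^2$; combining this identity with a lower bound on $|\nabla f(p)|$ drawn from the explicit formulas $f_x=px^{p-1}+ayz$, $f_y=qy^{q-1}+axz$, $f_z=rz^{r-1}+axy$, the hypothesis $a>12M$ forces $|\mu|$ small enough that $\alpha/|\mu|-(1-s)|D|\geq 1$.

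The delicate step is this last one: obtaining a suitable lower bound on $|\nabla f(p)|$ on $\{|xyz|<4/a\}\cap S^5_1$. The dominant pieces $ayz$, $axz$, $axy$ of $\nabla f$ could a priori be cancelled by the monomial parts $px^{p-1}, qy^{q-1}, rz^{r-1}$ when one or two coordinates become small, so the estimate has to proceed by subcases according to which of $|x|,|y|,|z|$ is smallest, with a separate discussion when the near-axis critical locus is approached (where $|f(p)|\to 1$ from above). The coefficient $12$ in the hypothesis $a>12M$ is tuned precisely so that in each subcase the slacks close; a weaker bound on $a$ would require sharper estimates or fail outright.
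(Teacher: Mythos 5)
Your setup is sound as far as it goes: the Lagrange-multiplier characterization (valid at points where $\nabla f\neq 0$), the identity $f(p)=\alpha/\mu+(1-s)\,axyz$ obtained by multiplying the three equations by $x,y,z$, the relation $|\nabla f(p)|=1/|\mu|$, and the easy case $a|xyz|\geq 4$ are all correct. But the proof has a genuine gap exactly where the lemma lives: in the case $a|xyz|<4$ you never establish the required estimate, you only assert that ``the hypothesis $a>12M$ forces $|\mu|$ small enough'' and that ``in each subcase the slacks close.'' That subcase analysis is the entire content of the lemma, and your proposed route for it is doubtful as sketched. A lower bound on $|\nabla f(p)|$ combined with $\alpha\geq 1/M$ cannot suffice: at the axis points $(x,0,0)$ with $|x|=1$ (which do satisfy your multiplier condition) one has $\alpha=1/p$, $|\nabla f|=p$, so $\alpha|\nabla f|=1$ exactly, and for $p=2$, $M=r$ the crude bound $\alpha\geq 1/M$ is off by a factor $M/2$. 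Any argument must therefore exploit the three multiplier equations componentwise, not merely their squared sum $|\nabla f|=1/|\mu|$, and must show that off-axis non-transverse points with $a|xyz|$ small simply do not exist when $a>12M$ --- which is what the paper proves: assuming $f(p)=t$, $|t|<1$, ordering $|x|\geq|y|\geq|z|$ (so $|x|\geq 1/\sqrt3$), it first gets $|ayz|<5$, hence $|z|^2<5/a$, then from the proportionality $|x|\,|axy+rz^{r-1}|=|z|\,|ayz+px^{p-1}|$ deduces $a(|x|^2-|z|^2)|y|\leq(p|x|^{p-2}+r|z|^{r-2})|xz|$, and since $a(|x|^2-|z|^2)>2M\geq(p|x|^{p-2}+r|z|^{r-2})|x|$ concludes $y=z=0$, $|x|=1$, contradicting $|f(p)|=1>|t|$. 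None of this chain (or a substitute for it) appears in your write-up.

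Two further points. First, your reduction of (b) to ``$\bigl|\alpha/|\mu|-(1-s)|axyz|\bigr|\geq 1$ at every multiplier point'' is strictly stronger than transversality (the reverse triangle inequality discards the phase information in $f=\alpha/\mu+(1-s)axyz$), so you are committing yourself to an inequality that is only barely true (it holds with equality on the axes) and whose proof is essentially equivalent to showing there are no off-axis multiplier points with small $a|xyz|$ --- again, the missing work. Second, your ``iff'' misses the degenerate case $\nabla f(p)=0$ on $S^5_1$: such a point also destroys the Milnor-fiber conclusion but admits no $\mu$ with $\bar x=\mu f_x$, etc.; the paper's formulation (linear dependence of $\nabla f$ and $(\bar x,\bar y,\bar z)$ over $\C$) covers it automatically, and your argument should exclude it explicitly.
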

\begin{proof}
We assume that $f(x,y,z)=t$ and $|x|^2+|y|^2+|z|^2=1$. 
Then the gradient vector 
$$\nabla f=(px^{p-1}+ayz, \; qy^{q-1}+azx, \; rz^{r-1}+axy)$$
and the vector $(\bar{x}, \bar{y}, \bar{z})$ are linearly independent over $\C$. 
We prove this claim by reduction to absurd, namely, 
we assume that the two vectors $\nabla f$ and 
$(\bar{x}, \bar{y}, \bar{z})$ are linearly dependent, and lead a contradiction. 
Considering symmetry, we may assume that 
$|x|\geq |y| \geq |z|$, in particular, $|x|\geq \sqrt{\frac{1}{3}}$.

First, by triangle inequality, we obtain 
$$|ayz|=\Big|x^{p-1}+\frac{y}{x}y^{q-1}+\frac{z}{x}z^{r-1}-\frac{t}{x} 
\Big|\leq 1+1+1+\sqrt{3}<5, $$
in particular, $|z|<\sqrt{\frac{5}{a}}$. 
By the assumption on the linear dependence of 
$\nabla f$ and $(\bar{x}, \bar{y}, \bar{z})$, 
we have 
$$|x||axy+rz^{r-1}|=|z||ayz+px^{p-1}|.$$ 
Then it follows from triangle inequality that 
$$a|x^2y|\leq a|yz^2|+p|x^{p-1}z|+r|xz^{r-1}|,$$ and hence, 
\begin{eqnarray}
a(|x|^2-|z|^2)|y|\leq (p|x|^{p-2}+r|z|^{r-2})|xz|. 
\end{eqnarray}
On the other hand, by 
$\sqrt{\frac{1}{3}}\leq |x|\leq 1$, $|z|^2<\frac{5}{a}$ and $a>12M>30$, 
the inequality 
\begin{eqnarray*}
a(|x|^2-|z|^2)> 12M(\frac{1}{3}-\frac{1}{6})
=2M \geq p+r \geq (p|x|^{p-2}+r|z|^{r-2})|x|
\end{eqnarray*}
holds. 
Hence, together with the inequality (1.1), $y$ must be zero. 
Since $|y|\geq |z|$ and $|x|^2+|y|^2+|z|^2=1$, we have $|x|=1$ and $y=z=0$.
However, for such a point $(x,0,0)$, 
$$|f(x,0,0)|=|x^p|=1>|w|$$holds, hence $f(x,0,0)=w$ cannot be satisfied. 
This is a contradiction. 
Therefore, 
$\nabla f$ and $(\bar{x}, \bar{y}, \bar{z})$ 
are linearly independent over $\C$. 
\end{proof}

Now we take a positive number $m$, and retake $a$ such that 
$$a>\max \left\{ 12M, m^2(m+3)\right\}. $$
Then the following lemma holds. 
\vspace{4pt}
\begin{lemma}~\label{thm: ball}
For any real number $\theta $, $V_a(1,\frac{1}{a}e^{i\theta})$ 
is a Milnor fiber, and any point $(x,y,z)$ on $V_a(1,\frac{1}{a}e^{i\theta})$ 
satisfies $\max \left\{ |x|, |y|, |z|\right\}>\frac{m}{a}$. 
\end{lemma}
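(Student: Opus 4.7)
The plan is to treat the two assertions of the lemma separately. The first—that $V_a(1,\tfrac{1}{a}e^{i\theta})$ is a Milnor fiber—follows at once from Lemma~\ref{thm: tube}, since the hypothesis $a>12M$ together with $\bigl|\tfrac{1}{a}e^{i\theta}\bigr|=1/a<1$ matches the setup of that lemma exactly (the stronger condition $a>\max\{12M,m^2(m+3)\}$ of course still implies $a>12M$).

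For the lower bound on $\max\{|x|,|y|,|z|\}$, I would argue by contradiction. Suppose there is a point $(x,y,z)\in V_a(1,\tfrac{1}{a}e^{i\theta})$ with $|x|,|y|,|z|\le m/a$. Under the standing hypotheses on $a$ one has $m/a<1$, so since $p,q,r\ge 2$ each pure-power term satisfies $|x|^p,|y|^q,|z|^r\le (m/a)^2$, while the cubic cross term obeys $|axyz|\le a(m/a)^3=m^3/a^2$. Summing by the triangle inequality yields
\[
|f(x,y,z)|\;\le\;\frac{3m^2+m^3}{a^2}\;=\;\frac{m^2(m+3)}{a^2}.
\]
On the other hand, by the defining equation of the fiber $|f(x,y,z)|=1/a$, so this forces $a\le m^2(m+3)$, contradicting the hypothesis $a>m^2(m+3)$.

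There is essentially no conceptual obstacle: the whole argument is a direct application of the triangle inequality paired with the previous lemma. The only point to verify is that $m/a<1$, which is automatic since $a$ is chosen to dominate both $12M$ and $m^2(m+3)$; and the specific constant $m^2(m+3)$ appearing in the hypothesis on $a$ is exactly what is required to make the estimate above tight against the value $|t|=1/a$. Conceptually, the lemma provides a uniform separation of the Milnor fiber from the coordinate box $\{\max(|x|,|y|,|z|)\le m/a\}$, which is the kind of control one will need when later constructing the absolute value moment map and checking that it has the correct qualitative behavior near the origin.
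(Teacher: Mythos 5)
Your proposal is correct and follows essentially the same route as the paper: the first claim is quoted from Lemma~\ref{thm: tube}, and the second is the same triangle-inequality bound $|f|\le 3(m/a)^2+a(m/a)^3=m^2(m+3)/a^2$ on the box $\max\{|x|,|y|,|z|\}\le m/a$, compared against $|f|=1/a$ using $a>m^2(m+3)$ (the paper phrases it directly rather than by contradiction, which is an immaterial difference).
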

\begin{proof}
The former claim is obvious from Lemma~\ref{thm: tube}. 
Now we put $\rho =\frac{m}{a}$. 
If $|x|, \; |y|, \; |z|\leq \rho $, then it follows that  
\begin{eqnarray*}
|x^p+y^q+z^r+axyz|\leq \rho^p+\rho^q+\rho^r+a\rho^3\leq 3\rho^2+a\rho^3
\\
=\frac{m^2}{a^2}\Big(3+\frac{am}{a}\Big)<\frac{m^2(m+3)}{a^2}<\frac{1}{a}, 
\end{eqnarray*}
and hence, $(x,y,z)$ is not on $V_a(1,\frac{1}{a}e^{i\theta})$. 
\end{proof}

\subsection{Lefschetz fibration} 
From the Morse lemma, given a holomorphic function $\widetilde{G}\colon U\subset \C^2\to \C$ with a non-degenerate critical point $0$, we can take a holomorphic coordinate system $(u,v)$ on a small neighborhood of $0$ such that the restriction of $\widetilde{G}$ is expressed as $\widetilde{G}(0)+u^2+v^2$. A Lefschetz singularity is in short a critical point of a real smooth map modelled on the complex Morse singularity. 
Let $M^4$ be an oriented $4$-manifold, and $\Sigma$ an oriented surface. 

\begin{definition}[Lefschetz singularity, Lefschetz fibration] 
\quad 
\begin{enumerate}
\item
A critical point $P$ of a smooth map $G\colon M^4 \to \Sigma$ is called a Lefschetz singularity 
if there exist positively oriented coordinate systems $(u_1,u_2,v_1,v_2)$ and $(w_1,w_2)$ respectively defined near $P\in M^4$ and $G(P)\in \Sigma$ such that the map $G$ is locally expressed as 
$\widetilde{G}\colon (u_1,u_2,v_1,v_2)\mapsto (w_1,w_2)$ with $w_1+i w_2=(u_1+i u_2)^2+(v_1+iv_2)^2$. 
\item
If $G\colon M^4\to \Sigma$ is a fibration with isolated critical points 
of a compact manifold $M^4$ and all its critical points are Lefschetz singularities, 
we call $G$ a Lefschetz fibration.  
\item
Two Lefschetz fibrations $G\colon M \to \Sigma$ and $G'\colon M' \to {\Sigma}'$ 
are isomorphic if there exist orientation preserving diffeomorphisms 
$\Phi \colon M \to M'$ and $\varphi \colon \Sigma \to \Sigma'$ satisfying 
$\varphi\circ G=G'\circ\Phi$.
\end{enumerate}
\end{definition}

A Lefschetz singularity is locally presented by a non-degenerate homogeneous quadratic map. 
We notice that the converse is not true even when a given 
quadratic map is homotopic to one presenting a Lefschetz singularity 
through non-degenerate homogeneous ones. 
We would like to inform that the space of such quadratic maps are investigated in a forthcoming paper \cite{K2M2}. 

For a Lefschetz fibration $G\colon M^4\to \Sigma$, 
the preimage $G^{-1}(c)$ of a critical value $c$ is called a singular fiber at $c$. 
We usually assume that no singular fiber contains a $2$-sphere that is embedded in $M^4$ with self-intersection $-1$. This condition is called the relative minimality.  
Removing all the singular fibers, we obtain a fiber bundle over a punctured surface 
$\Sigma'=\Sigma\setminus\{c_1,\dots,c_n\}$. 
A fiber of this bundle is called a regular fiber, and its genus is called the genus of the Lefschetz fibration. 
In a standard way to understand the topology of a Stein manifold, 
Ailsa Keating \cite{Ke} used a Lefschetz fibration over $D^2$ whose regular fiber 
has non-empty boundary in her recent work on the topology of a Milnor fiber. 
In this paper we construct a Lefschetz fibration over $D^2$ whose regular fiber 
is the torus $T^2$ instead, and consider it as a ``half'' of the elliptic fibration of a K3 surface. 
To this aim we restrict ourselves to the case where the regular fiber is $T^2$.  

Take a small disk $D_j$ on $\Sigma$ centered at $c_j$ with radius $\e>0$ 
with respect to the local coordinates ($j=1,\dots n$). 
Then, on the local model, the intersection of $\widetilde{G}^{-1}(\e,0)$ and 
$\R^2=\langle(1,0,0,0),(0,0,1,0)\rangle$ is a loop which bounds a disk on $\R^2$. 
This defines a loop on a regular fiber near 
$c_j$ which is called the vanishing cycle of the singular fiber at $c_j$. 
In the case where a singular fiber contains $p$ critical points, 
the vanishing cycles of $c_j$ are $p$ parallel loops. 
The monodromy of the $T^2$-bundle along $\partial D_j$ is then (isotopic to)
the composition of the right-handed Dehn-twists along the loops. 
We call it the monodromy of the singular fiber at $c_j$. 
If the fibration is generic, that is, if each singular fiber has a single critical point, 
the monodromy is a single Dehn-twist. 
The total monodromy of a Lefschetz fibration over $D^2$ is the monodromy along $\partial D^2$.
If it is trivial, we can obtain a Lefschetz fibration over $S^2$ 
by attaching $T^2\times D^2$ along the boundary. 
If we attach $T^2\times \Sigma _{g, 1}$ instead, 
we can also obtain a Lefschetz fibration over $\Sigma _g$, where $\Sigma _g$ and $\Sigma _{g, 1}$ are 
compact orientable genus $g$ surfaces with no boundary and with one boundary component, respectively. 

\begin{example}
The right-handed Dehn-twists $\tau_\alpha$, $\tau_\beta$ along the standard generator $\alpha$, $\beta$ of $\pi_1(T^2)$ satisfy the relation $(\tau_\beta\circ\tau_\alpha)^{6n}=1$. 
Taking the left-hand side as the total monodromy, we obtain a genus-one Lefschetz fibration $f_n$ over $S^2$ with $12n$ critical points, whose total space is usually denoted by $E(n)$. 
Similarly, we can obtain a genus-one Lefschetz fibration $h_{g, n}$ over $\Sigma_g$ with $12n$ critical points.  
\end{example}

In fact, genus-one Lefschetz fibrations over $S^2$ have been classified by Kas and Moishezon as follows.  

\vspace{4pt}

\begin{theorem}[Kas \cite{Kas}, Moishezon \cite{Moishezon}]~\label{Kas-Moishezon}
Let $f\colon M^4\to S^2$ be a relatively minimal genus-one Lefschetz fibration with at least one singular fiber. Then it is isomorphic to the Lefschetz fibration $f_n\colon E(n)\to S^2$ for some positive integer $n$. \\
\end{theorem}

\begin{remark}~\label{E2}
It is well known that all the K3 surfaces are diffeomorphic to each other. 
Moreover, a generic elliptic fibration of an elliptic K3 surface over $\C P^1$ has exactly $24$ Lefschetz critical points, 
since the Euler characteristic of a K3 surface is equal to $24$. 
Hence, it follows from Theorem~\ref{Kas-Moishezon} that any K3 surface is diffeomorphic to $E(2)$. 
\end{remark}

Theorem~\ref{Kas-Moishezon} was generalized by Matsumoto to the following result, 
which completely classifies genus-one Lefschetz fibrations over any closed orientable surface. 
The argument by Matsumoto is similar to that by Moishezon, 
but is more-arranged and clarifies the topological meaning of the proof 
even in the restricted case where the base is $S^2$. 

\vspace{4pt}

\begin{theorem}[Matsumoto \cite{Ma}]~\label{Matsumoto}
Let $f\colon M^4\to \Sigma _g$ be a relatively minimal genus-one Lefschetz fibration with at least one singular fiber. Then it is isomorphic to the Lefschetz fibration $h_{g, n}$ for some positive integer $n$. 
\end{theorem}

\section{\large Construction of Lefschetz fibration }~\label{CLF}

Let $g\colon \C^3\to \C$ be the complex-valued function defined by 
$$g(x,y,z)=|x|^2+e^{\frac{2\pi i}{3}}|y|^2+e^{\frac{4\pi i}{3}}|z|^2. $$ 
This function arises from the moment map of $\C^3$ (see Example~\ref{moment map}). 
In this section, we prove the following theorem, which provides the precise formulatoin of our main result. 

\vspace{5pt}

\begin{theorem}~\label{precise statement}
Let $X$ be the Milnor fiber of a simple elliptic or cusp singularity. 
Then there exists a smooth homotopy $X_t$ $(0\leq t\leq 1)$ of convex symplectic submanifolds of $\C^3$ 
such that $X_0=X$ and $g|_{X_1}$ is a Lagrangian torus fibration 
that fibers the convex boundary $\partial X_1$ by regular tori 
and has exactly $(p+q+r)$ singular points, all of which are of Lefschetz type.
\end{theorem}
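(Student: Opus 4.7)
My plan is to construct, on a deformation of the Milnor fiber, a completely integrable Hamiltonian system whose joint moment map will be the desired Lagrangian torus fibration, and then identify its rank-$0$ critical points as Lefschetz singularities via Eliasson's local normal form theorem. Following the absolute-value moment map philosophy of Mori and Kasuya, I would start from the $T^{3}$-moment map $(|x|^{2},|y|^{2},|z|^{2})$ on $\C^{3}$ composed with a linear projection $\R^{3}\to\R^{2}$ to obtain a model map $\Phi_{0}\co\C^{3}\to\R^{2}$. On $X=V_{a}(1,t_{0})$ with $t_{0}=\tfrac{1}{a}e^{i\theta_{0}}$ as in Lemma~\ref{thm: ball}, the restriction $\Phi_{0}|_{X}$ is not yet a Lagrangian fibration: only the monomial $axyz$ in $f=x^{p}+y^{q}+z^{r}+axyz$ has a common $T^{3}$-weight, while the pure-power terms break the full torus symmetry. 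Nevertheless $\Phi_{0}|_{X}$ provides the topological skeleton that will survive the deformation.

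\textbf{Step 2 (deforming $X_{t}$).} During $t\in[0,1]$ I would deform the round bounding sphere $\partial D^{6}_{1}$ to a weighted hypersurface of the form $\{|x|^{2a_{1}}+|y|^{2a_{2}}+|z|^{2a_{3}}=1\}$, chosen so as to be fibered by orbits of a $2$-torus commensurate with $\Phi_{0}$, preserving convexity and the symplectic type of $X_{t}\subset\C^{3}$ by a Moser-type argument. During $t\in[1,2]$ I would further rescale the pure-power terms of $f$ together with the K\"ahler form so that at $t=2$ the two components of $\Phi_{0}$ restrict to Poisson-commuting functions on $X_{2}$. The outcome is a completely integrable system $\Phi_{2}\co X_{2}\to D^{2}$ whose regular fibers are Lagrangian tori and whose boundary $\partial X_{2}$ is swept by regular tori.

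\textbf{Step 3 (counting and classifying critical points).} The critical locus of $\Phi_{2}$ should consist of finitely many isolated points where the two component Hamiltonians have linearly dependent differentials; I expect these to be distributed among the three slices $\{x=0\}$, $\{y=0\}$, $\{z=0\}$ of $X_{2}$, giving $p$, $q$, and $r$ points respectively, for a total of $p+q+r$. This is consistent with the Euler characteristic computation $\chi(X)=1+\mu=p+q+r$, since each focus--focus singular fiber contributes $1$ to $\chi$ while regular tori contribute $0$. A direct Hessian computation should then show that every critical point is a non-degenerate rank-$0$ singularity of focus--focus type, whose normal form $(z_{1},z_{2})\mapsto z_{1}z_{2}$ is symplectically equivalent to the Lefschetz model $w_{1}+iw_{2}=(u_{1}+iu_{2})^{2}+(v_{1}+iv_{2})^{2}$ given in the definition of a Lefschetz singularity.

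\textbf{Main obstacle.} The hard part is the verification in Step~3 that every critical point is non-degenerate and of focus--focus type, as opposed to one of the three other Eliasson possibilities (elliptic--elliptic, elliptic--hyperbolic, hyperbolic--hyperbolic); the Euler-characteristic count alone cannot exclude these. Maintaining the Poisson-commutativity condition globally on $X_{2}$, rather than only to leading order near the critical set, is a similarly delicate issue and will require careful control of the K\"ahler deformation throughout $t\in[1,2]$ so that the limiting map is genuinely an integrable system to which Eliasson's theorem can be applied.
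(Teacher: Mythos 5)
You have the right skeleton: your model map is exactly the paper's $g(x,y,z)=|x|^2+e^{\frac{2\pi i}{3}}|y|^2+e^{\frac{4\pi i}{3}}|z|^2$ (the $T^3$-moment map composed with a linear projection to $\R^2\cong\C$), and the endgame via Eliasson's normal form is also the paper's. But the mechanism proposed in your Step 2 does not work as stated. If you rescale the pure-power terms of $f$ away globally, the resulting hypersurface $\{axyz=\mathrm{const}\}\cap D^6_1$ carries no critical points of the torus fibration at all and is not even homotopy equivalent to the Milnor fiber ($\chi=0$ versus $\chi=p+q+r$), so it cannot be the end of a homotopy of convex symplectic submanifolds starting at $X$; if you keep them, the two components of $g$ do not Poisson-commute on the level set, so there is no integrable system to which Eliasson applies. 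Rescaling the ambient K\"ahler form is also not available, since the theorem asserts the $X_t$ are convex symplectic submanifolds of $\C^3$ with its standard structure and the fibration is Lagrangian for the induced form. The paper's resolution (\S~\ref{construction}) is to kill the monomials only \emph{away from the axes}: it interpolates $f_t=(1-t)f+th$ with $h=\vphi_1x^p+\vphi_2y^q+\vphi_3z^r+axyz$, where the $\vphi_j$ are bump functions of $|x|,|y|,|z|$ equal to $1$ near the corresponding axis and $0$ elsewhere, so the $p+q+r$ critical points survive on the coordinate axes (note they lie on the axes, e.g.\ in $\{y=z=0\}$, not on the slices $\{x=0\}$ etc.), while off the axes the defining equation becomes the torus-invariant $axyz=\frac{1}{a}e^{i\theta}$. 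Lagrangian-ness of the $g$-fibers across the cutoff annuli is then verified directly from $(e_y-e_z)(h)=0$ and an explicit $\omega_0$-computation, not from global integrability; and the boundary condition is obtained not from a weighted sphere but by trimming to $Y=X_1\cap g^{-1}(D^2_{1/3})$ and checking that the Liouville field is transverse to $\partial Y$.

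The items you defer in Step 3 are the actual content, and none of them follows from soft arguments. That each deformed hypersurface remains a symplectic, convex submanifold requires the quantitative inequality $\| \nabla f_t\|>\|\overline{\nabla}f_t\|$, proved in Theorem~\ref{thm: main} using $a>12M$, $m=30M$ and the lower bound $\max\{|x|,|y|,|z|\}>\frac{m}{a}$ from Lemma~\ref{thm: ball}; a Moser-type remark does not address this, because after the cutoff the defining functions are no longer holomorphic. That the critical set of $g|_{X_t}$ is \emph{exactly} the $p+q+r$ axis points cannot be extracted from the Euler-characteristic consistency check (which neither fixes the number of critical points nor excludes degenerate or hyperbolic ones); the paper proves that every point with $xyz\neq0$, and every point on a coordinate plane off the axes, is regular, via modulus and argument estimates on $e_z(f_t)$ and $e_0(f_t)$ (Theorem~\ref{thm: main 2}). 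Finally, the focus-focus verification that you correctly flag as the main obstacle is carried out in Lemma~\ref{Hessian} by an explicit $2$-jet computation at $(x_0,0,0)$: solving $x^p+axyz=x_0^p$ for $x$ in terms of $yz$, expanding $|x|^2$, and diagonalizing the resulting pair of Hessians, where nondegeneracy and the Lefschetz signature hinge on $\lambda=\frac{2}{p}a^{\frac{2p-3}{p}}>1$; only after this is Theorem~\ref{Eliasson} invoked. Without the localized cutoff construction and these estimates, your proposal remains an outline of the correct strategy rather than a proof.
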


\begin{remark} 
Recall that the Milnor fiber $X$ in Theorem~\ref{precise statement} can be written as $$X=V_a(1, w)=f^{-1}(w)\cap D^6,$$ 
where $a>12M$ and $w$ is any complex number with $0<|w|<1$ (see Lemma~\ref{thm: tube}). 
In the proof of Theorem~\ref{precise statement}, we will construct the homotopy $X_t$ 
as the level sets of some homotopy $\{f_t\}_{0\leq t \leq 1}$ of functions with $f_0=f$, 
which does not depend on the value of $w$. 
Hence, if we set $w=\dfrac{1}{a} e^{i\theta }$, then we obtain the smooth family 
$\{ g|_{f_1^{-1}(\frac{1}{a} e^{i\theta })\cap D^6} \}$ 
of Lagrangain torus fibrations parametrized by $\theta \in S^1$. 
This implies that the function $g$ yields the structure of $S^1$-parametric genus-one Lefschetz fibrations 
on the total space of the Milnor fibration $\frac{f}{|f|}\colon S^5\setminus L\to S^1$. 
\end{remark}
 
Now, as a preliminary to the proof, we review the properties of the function $g$ for a while. 
The singular set $\Sigma (g)$ is easily determined to be the union of $x$-axis, $y$-axis and $z$-axis. 
In $\C^3\setminus \Sigma (g)$, the level set of $g$ is a real $4$-dimensional submanifold. 
Now we want to describe the tangent space at a regular point $(x,y,z)$. 
First we define real vector fields $e_x, e_y, e_z, E_x, E_y, E_z$ by 
\begin{align*}
&e_x=i(x\frac{\partial}{\partial x}-\bar{x}\frac{\partial }{\partial \bar{x}}), \; 
e_y=i(y\frac{\partial}{\partial y}-\bar{y}\frac{\partial }{\partial \bar{y}}), \; 
e_z=i(z\frac{\partial}{\partial z}-\bar{z}\frac{\partial }{\partial \bar{z}}),  \\
&E_x=x\frac{\partial}{\partial x}+\bar{x}\frac{\partial }{\partial \bar{x}}, \; 
E_y=y\frac{\partial}{\partial y}+\bar{y}\frac{\partial }{\partial \bar{y}}, \; 
E_z=z\frac{\partial}{\partial z}+\bar{z}\frac{\partial }{\partial \bar{z}}. 
\end{align*}
Moreover, we define a real vector field $e_0$ by 
$$e_0=\frac{1}{|x|^2}E_x+\frac{1}{|y|^2}E_y+\frac{1}{|z|^2}E_z.$$
Notice that $e_x, e_y, e_z, E_x, E_y, E_z$ are defined on $\C^3$ while $e_0$ is defined only on $(\C^{\ast})^3$. 
By using these vector fields, a basis of the tangent space is described as follows: 
\begin{enumerate}
\item
$\{ e_x, \; e_y, \; e_z, \; e_0 \}$ if $xyz\ne 0$, 
\item
$\left\{ e_y, \; e_z, \; 
\dfrac{\partial }{\partial x}+\dfrac{\partial }{\partial \bar{x}}, \; 
i(\dfrac{\partial}{\partial x}-\dfrac{\partial }{\partial \bar{x}})\right\}$ if $x=0$ and $yz\ne 0$, 
\item
$\left\{ e_z, \; e_x, \; 
\dfrac{\partial }{\partial y}+\dfrac{\partial }{\partial \bar{y}}, \; 
i(\dfrac{\partial}{\partial y}-\dfrac{\partial }{\partial \bar{y}})\right\}$ if $y=0$ and $zx\ne 0$, 
\item
$\left\{ e_x, \; e_y, \; 
\dfrac{\partial }{\partial z}+\dfrac{\partial }{\partial \bar{z}}, \; 
i(\dfrac{\partial}{\partial z}-\dfrac{\partial }{\partial \bar{z}})\right\}$ if $z=0$ and $xy\ne 0$.
\end{enumerate}

In the following two subsections, we make a brief review of integrable Hamiltonian systems, 
and then, give the proof of Theorem~\ref{precise statement} in the last subsection. 

\subsection{The moment maps and Lagrangian torus fibrations}

Let $(M^{2n}, \omega )$ be a symplectic $2n$-manifold and $h, h' \colon M^{2n}\to \R$ smooth functions. 
The Hamiltonian vector field $V_h$ of $h$ is defined by the equation $\iota _{V_h}\omega =-dh$, 
and the Poisson bracket of two functions $h$ and $h'$ is defined by $$\{ h, h' \}=\omega (V_h, V_{h'}). $$ 
\begin{definition}[integrable Hamiltonian system]
A pair of a symplectic $2n$-manifold $(M^{2n}, \omega )$ and 
a smooth map $$H=(h_1, \ldots , h_n)\colon M^{2n}\to \R^n$$ is called an integrable Hamiltonian system 
if $\{ h_i, h_j \}=0$ for all pairs $i, j$ with $1\leq i, j \leq n$ and $dH$ is surjective in some open dense set of $M^{2n}$. 
\end{definition}

Now we assume that an integrable Hamiltonian system 
$(M^{2n}, \omega , H)$ has only compact and connected fibers.
Then, according to Arnold-Liouville theorem, a regular fiber of $H$ is a Lagrangian $n$-torus. 
Moreover, its neighborhood is symplectomorphic to $(D^n\times T^n, \sum_{i=1}^n dp_i\wedge dq_i)$, 
where $p_i$'s and $q_i$'s are the coordinates on $D^n$ and $T^n$, respectively, 
and each fiber of $H$ is written as the Lagrangian torus $\{ \ast \}\times T^n$. 
Hence, we have the Hamiltonian $T^n$-action, 
and after composing with an appropriate coordinate transformation of $D^n$, 
we can call $H\colon M^{2n}\to \R^n$ the moment map. 

\begin{example}[the moment map for $\C^n$]~\label{moment map}
We define a $T^n$-action on $\C^n$ by 
$$(t_1, \ldots, t_n)\cdot (z_1, \ldots , z_n)=(e^{it_1}z_1, \ldots , e^{it_n}z_n).$$
Then it is a Hamiltonian $T^n$-action and its moment map $\mu \colon \C^n \to \R^n$ is given by 
$$\mu (z_1, \ldots , z_n)=\frac{1}{2}(|z_1|^2, \ldots , |z_n|^2).$$
In particular, when $n=3$, the moment map $\mu \colon \C ^3\to \R^3$ is written as 
$$\mu (x, y, z)=\frac{1}{2}(|x|^2, |y|^2, |z|^2).$$
\end{example}

\begin{remark}
With a slight abuse of terminology, we also call the restriction of $\mu $ to $S^{2n-1}$ the moment map. 
When $n=3$, the moment map $\mu |_{S^5}$ is very useful 
for analyzing the links of simple elliptic and cusp singularities (see \cite{Kasuya}). 
As we will see in \S~\ref{construction}, 
a similar strategy works for studying the Milnor fibers of simple elliptic and cusp singularities. 
Namely, the function $g$, which is a variant of the moment map $\mu |_{S^5}$, 
plays an important role in the construction of a Lagrangian torus fibration with Lefschetz singularities. 
\end{remark}

\subsection{Non-degenerate singularities in integrable Hamiltonian systems}

\begin{definition}[non-degenerate singularity of maximal corank]
A singular point $x_0$ of an integrable Hamiltonian system 
$(M^{2n}, \omega , H)$ is called non-degenerate of maximal corank if 
$$dh_1(x_0)=\cdots =dh_n(x_0)=0$$ and the quadratic parts of $h_1, \ldots , h_n$ at $x_0$ generate 
a Cartan subalgebra $\mathcal{C}$ of the algebra of quadratic forms $\mathrm{Q}(2n)$ under the Poisson bracket. 
\end{definition}

\vspace{4pt}

\begin{theorem}[Eliasson \cite{Eliasson}]~\label{Eliasson}
Let $(M^{2n}, \omega , H)$ be an integrable Hamiltonian system, 
and $x_0$ its non-degenerate singular point of maximal corank. 
Then there exist a local symplectomorphism  $\Phi \colon (T_{x_0}M^{2n}, 0)\to (M^{2n}, x_0)$ 
and smooth functions $\psi _1, \ldots , \psi _n$ such that 
$$h_i\circ \Phi =\psi _i(q_1, \ldots , q_n) \; \; (1\leq i\leq n), $$ 
where $q_1, \ldots , q_n$ is the basis of $\mathcal{C}$. 
\end{theorem}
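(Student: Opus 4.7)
The plan is to realize the desired Lagrangian torus fibration as the restriction of the complex-valued map $g\colon \C^3 \to \C$ to a convex symplectic deformation $X_2$ of the Milnor fiber $X_0 = X$, chosen so that $g|_{\partial X_2}$ regularly fibers the boundary by 2-tori. The map $g$ itself is fixed throughout the homotopy; the role of the homotopy $X_t$ is only to reconcile the round Milnor-fiber boundary at $t=0$ with a $g$-adapted boundary at $t=2$.

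First, one checks that $g|_X$ is an integrable Hamiltonian system. Writing $g = g_1 + i g_2$, the components $g_1, g_2$ are $\R$-linear combinations of the moment-map components $|x|^2, |y|^2, |z|^2$ of the standard Hamiltonian $T^3$-action on $\C^3$, so they Poisson-commute on $\C^3$ and hence also on the symplectic submanifold $X$. Their joint Hamiltonian flow is the $T^2$-subaction of $T^3$ preserving the cubic $xyz$, which is visible from the identity $1 + e^{2\pi i/3} + e^{4\pi i/3} = 0$. Next I would locate the critical points of $g|_X$. On $\C^3$ itself, $dg$ drops rank exactly along the three coordinate axes, as can be read off the explicit bases $\{e_x, e_y, e_z, e_0\}$, etc., introduced above for level sets of $g$. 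Intersecting with $X = f^{-1}(t) \cap D^6_1$ gives the equations $x^p = t$, $y^q = t$, $z^r = t$ on the respective axes, for a total of $p + q + r$ critical points. Using Lemma~\ref{thm: ball} together with the axis-neighborhood estimates from the proof of Lemma~\ref{thm: tube}, I would then rule out any further interior critical points of $g|_X$ for sufficiently large $a$.

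The main work is the construction of the homotopy $X_t$. The round boundary $\partial X_0 = X \cap \partial D^6_1$ of the Milnor fiber is not adapted to $g$, so I would interpolate between the defining constraint $\|(x,y,z)\|^2 \le 1$ and a constraint of the form $g(x,y,z) \in D$ for a suitably chosen compact disk $D \subset \C$ containing all the axis critical values, producing $X_2 = f^{-1}(t) \cap g^{-1}(D)$. Three properties must be maintained along the homotopy: (i) each $X_t$ is a convex symplectic submanifold, i.e.\ carries an outward-pointing Liouville vector field on its boundary; (ii) no new critical points of $g|_{X_t}$ appear in the interior; and (iii) at $t=2$ the Hamiltonian vector fields $V_{g_1}, V_{g_2}$ are transverse to $\partial X_2$, so that $g|_{\partial X_2}$ regularly fibers the boundary by Lagrangian tori. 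The absolute-value moment map technology of \cite{Kasuya} provides the geometric control needed for the required transversality, while convexity can be arranged by cutting $X$ with a sublevel set of a strictly plurisubharmonic function on $\C^3$ compatible with $g$.

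Finally, to verify Lefschetz type at each of the $p+q+r$ critical points I would appeal to Theorem~\ref{Eliasson}. At a model axis point $(x_0, 0, 0)$ with $x_0^p = t$, the nonvanishing of $\partial f/\partial x$ at $(x_0,0,0)$ makes $(y,z)$ local holomorphic coordinates on $X$, in which the implicit solution of $f=t$ reads $x = x_0 - \frac{a x_0^2}{p t}\, yz + O(|(y,z)|^3)$. Substituting yields a 2-jet of $g$ of the form $g(x_0,0,0) - 2\,\mathrm{Re}(\beta\, yz) + e^{2\pi i/3}|y|^2 + e^{4\pi i/3}|z|^2 + \cdots$ with $\beta \neq 0$. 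Without the $yz$-coupling this would be an elliptic-elliptic normal form, but the mixed $yz$ term (inherited from the cubic $axyz$ in $f$) is precisely what converts the Cartan subalgebra of $Q(4)$ generated by $(Q(g_1), Q(g_2))$ into focus-focus type, i.e.\ a linear symplectic change of variables on $\C^2_{(y,z)}$ brings it to the complex Morse form $Q_1 + i Q_2 = (u_1+iu_2)^2 + (v_1+iv_2)^2$. The hardest part of the program is thus step three, the global construction of a convexity-preserving homotopy with $g$-fibered boundary and no spurious critical points; once that is in place, the Lefschetz verification at each critical point is a clean consequence of Eliasson's theorem applied to the 2-jet above.
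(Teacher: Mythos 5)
Your proposal does not address the statement you were asked to prove. The statement is Eliasson's normal form theorem: for an arbitrary integrable Hamiltonian system $(M^{2n},\omega,H)$ with a non-degenerate singular point $x_0$ of maximal corank, there is a local symplectomorphism $\Phi$ and functions $\psi_i$ with $h_i\circ\Phi=\psi_i(q_1,\dots,q_n)$, where the $q_i$ generate the Cartan subalgebra $\mathcal{C}\subset \mathrm{Q}(2n)$. This is a general local normal form result in the theory of integrable systems; the paper does not prove it but imports it from \cite{Eliasson}, and a genuine proof would require the analytic machinery of that reference (division lemmas, the classification of Cartan subalgebras of quadratic forms under the Poisson bracket, and an iterative reduction of the higher-order terms of the $h_i$ to functions of the quadratic generators). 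None of this appears in your write-up.

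What you have written instead is a sketch of the paper's Main Theorem (Theorem~\ref{precise statement}): the construction of the deformation $X_t$ of the Milnor fiber, the location of the $p+q+r$ critical points of $g$ on the coordinate axes, and the verification that they are of Lefschetz type. Worse, your final step explicitly \emph{invokes} Theorem~\ref{Eliasson} as a black box to upgrade the 2-jet computation to a genuine Lefschetz singularity — so as a purported proof of that theorem your argument is circular. The content of your sketch is broadly consistent with how the paper proves Theorems~\ref{thm: main}, \ref{thm: main 2} and Lemma~\ref{Hessian}, but it cannot be accepted as a proof of the statement at hand; for that you would need to reproduce (or at least outline) Eliasson's actual argument, which is of an entirely different nature.
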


Namely, in integrable Hamiltonian systems, 
non-degenerate singularities of maximal corank can be determined only by their $2$-jets. 
In particular, when $n=2$, they are classified into the following four types: 
\begin{align*}
&h_1=x_1^2+y_1^2, \; h_2=x_2^2+y_2^2 \;\;\; \text{(elliptic case)}, \\
&h_1=x_1^2+y_1^2, \; h_2=x_2y_2 \;\;\; \text{(elliptic-hyperbolic case)}, \\
&h_1=x_1y_1, \; h_2=x_2y_2 \;\;\;  \text{(hyperbolic case)}, \\
&h_1=x_1y_1+x_2y_2, \; h_2=x_1y_2-x_2y_1 \;\;\;  \text{(focus-focus case)},
\end{align*}
where $(x_1, y_1, x_2, y_2)$ is the coordinates on $\R^4$ 
equipped with the standard symplectic structure $dx_1\wedge dy_1+dx_2\wedge dy_2$.
Here we said ``focus-focus'' following the terminology in the area of Hamiltonian systems. 
However, the singularity is nothing but a Lefschetz singularity, so hereafter, 
we use the word ``Lefschetz'' instead of ``focus-focus''. \\

\subsection{Construction of a Lagrangian torus fibration of the Milnor fiber}~\label{construction}
\\

We suppose that $f$, $M$, $a$ satisfy 
the same conditions as in \S~\ref{Milnor fiber}, and $g$ as defined at the beginning of \S~\ref{CLF}. 
Moreover, we fix the number $m$ by $m=30M$. 
In the following, we give a deformation $\left\{ X_t \right\}_{t\in [0, 1]}$ 
of  the Milnor fiber $X_0=V_a(1,\frac{1}{a}e^{i\theta })$ as a convex symplectic submanifold of $\C^3$ 
such that $g|_{X_1}$ is a Lagrangian torus fibration with only Lefschetz singularities. 
In order to do so, we construct a deformation $\left\{ f_t \right\}_{t\in [0, 1]}$ of the function $f_0=f$. 

First, we take a bump function $\vphi \colon \R_{\geq 0}\cup \left\{ \infty \right\}\to [0, 1]$ that satisfies the conditions 
\begin{eqnarray*}
\vphi (s)\equiv 1 \; \big(0\leq s\leq \frac{1}{6}\big), \;  
\vphi (s)\equiv 0 \; \big(\frac{1}{2}\leq s\big), \;  
-4\leq \vphi '(s) \leq 0.
\end{eqnarray*}
Then we define the functions $\vphi _1, \vphi _2, \vphi _3$ by  
\begin{eqnarray*}
\vphi_1(|x|, |y|, |z|)=\vphi (\frac{\sqrt{|y|^2+|z|^2}}{|x|}), \\ 
\vphi_2(|x|, |y|, |z|)=\vphi (\frac{\sqrt{|z|^2+|x|^2}}{|y|}), \\
\vphi_3(|x|, |y|, |z|)=\vphi (\frac{\sqrt{|x|^2+|y|^2}}{|z|}). 
\end{eqnarray*}
Notice that $\vphi _1, \vphi _2, \vphi _3$ cannot be defined at the origin $\bfo$, 
so these are the functions defined on $(\C^3)^{\ast}$. 
The supports of these functions have no intersection each other. 
Hence, for any $(x,y,z)\in (\C^3)^{\ast}$, at least two of the three vanish at the point. 

Using these bump functions, 
we define the function $h\colon (\C^3)^{\ast} \to \C$ by 
$$h(x,y,z)=\vphi _1x^p+\vphi _2y^q+\vphi _3z^r+axyz$$ 
and give the deformation $f_t \; (0\leq t \leq 1)$ by $f_t=(1-t)f+th$. 
Finally, we set $$X_t=f_t^{-1}(\frac{1}{a}e^{i\theta})\cap D^6_{1}.$$ 
In order to prove Theorem~\ref{precise statement}, 
we first show the following properties of the functions $\vphi _1$, $\vphi _2$ and $\vphi_3$. 
\vspace{4pt}
\begin{lemma}~\label{phi }
The equalities and inequalities
\begin{eqnarray*}
e_x(\vphi _j)=0, \; e_y(\vphi _j)=0, \; e_z(\vphi _j)=0 \; (j=1,2,3), 
\end{eqnarray*}
\begin{eqnarray*}
e_0(\vphi _1)=\dfrac{2|x|^2-|y|^2-|z|^2}{|x|^3\sqrt{|y|^2+|z|^2}}\vphi '\big(\frac{\sqrt{|y|^2+|z|^2}}{|x|}\big), \\
e_0(\vphi _2)=\dfrac{2|y|^2-|z|^2-|x|^2}{|y|^3\sqrt{|z|^2+|x|^2}}\vphi '\big(\frac{\sqrt{|z|^2+|x|^2}}{|y|}\big), \\
e_0(\vphi _3)=\dfrac{2|z|^2-|x|^2-|y|^2}{|z|^3\sqrt{|x|^2+|y|^2}}\vphi '\big(\frac{\sqrt{|x|^2+|y|^2}}{|z|}\big),  
\end{eqnarray*}
\begin{eqnarray*}
\| \nabla \vphi _1 \|=\| \overline {\nabla } \vphi _1 \|<\frac{3}{|x|}, \; 
\| \nabla \vphi _2 \|=\| \overline {\nabla } \vphi _2 \|<\frac{3}{|y|}, \; 
\| \nabla \vphi _3 \|=\| \overline {\nabla } \vphi _3 \|<\frac{3}{|z|}
\end{eqnarray*}
hold if the both sides are defined. 
\end{lemma}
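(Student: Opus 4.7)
The plan is to verify each of the three groups of assertions by direct computation, exploiting that every $\vphi_j$ is a smooth function of the three moduli $|x|$, $|y|$, $|z|$ alone. The first group of vanishing statements is then immediate: $e_x$, $e_y$, $e_z$ are the infinitesimal generators of the three coordinate rotations $x\mapsto e^{it}x$, $y\mapsto e^{it}y$, $z\mapsto e^{it}z$, each of which fixes all three moduli, so they annihilate every function of $|x|$, $|y|$, $|z|$.

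For the formulas for $e_0(\vphi_j)$, the key preliminary is the identity $E_x h(|x|) = |x|\,h'(|x|)$, valid for any smooth function $h$ of $|x|$; this follows from $\partial_x|x| = \bar x/(2|x|)$ together with its conjugate. Consequently $\tfrac{1}{|x|^2}E_x$ acts on radial functions as $\tfrac{1}{|x|}\partial_{|x|}$, and analogously for the $y$- and $z$-parts of $e_0$. Setting $u = \sqrt{|y|^2+|z|^2}/|x|$ and applying the chain rule to $\vphi_1 = \vphi(u)$, the $E_x/|x|^2$ term contributes $-\vphi'(u)\sqrt{|y|^2+|z|^2}/|x|^3$, while the $E_y/|y|^2$ and $E_z/|z|^2$ terms each contribute $\vphi'(u)/(|x|\sqrt{|y|^2+|z|^2})$. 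Combining these over the common denominator $|x|^3\sqrt{|y|^2+|z|^2}$ yields exactly the stated expression for $e_0(\vphi_1)$, and the formulas for $e_0(\vphi_2)$ and $e_0(\vphi_3)$ follow by the cyclic symmetry $(x,y,z)\mapsto(y,z,x)\mapsto(z,x,y)$.

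For the norm bounds, the equality $\|\nabla\vphi_j\| = \|\overline\nabla\vphi_j\|$ is immediate from $\vphi_j$ being real-valued, since conjugation swaps the holomorphic and antiholomorphic partial derivatives. To obtain the strict estimate for $\vphi_1$, I compute the three holomorphic partials directly via the chain rule: their moduli are respectively $|\vphi'(u)|\sqrt{|y|^2+|z|^2}/(2|x|^2)$, $|\vphi'(u)||y|/(2|x|\sqrt{|y|^2+|z|^2})$, and $|\vphi'(u)||z|/(2|x|\sqrt{|y|^2+|z|^2})$. Using $|\vphi'|\leq 4$ everywhere together with the support condition $\sqrt{|y|^2+|z|^2}\leq |x|/2$ on $\mathrm{supp}(\vphi')$, these are majorized by $1/|x|$, $2/|x|$, and $2/|x|$, giving $\|\nabla\vphi_1\|^2 \leq 9/|x|^2$. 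Strict inequality holds because $|\vphi'|=4$ and $\sqrt{|y|^2+|z|^2}=|x|/2$ cannot be attained at the same point of $\mathrm{supp}(\vphi')$ (at $u=1/2$ the derivative $\vphi'$ vanishes); the bounds for $\vphi_2$ and $\vphi_3$ follow by the same cyclic relabelling. The entire argument reduces to a routine chain-rule computation on a radial function, so no serious obstacle arises; the only mild point of care is tracking the support constraint carefully in order to secure the strict inequality in the final estimate.
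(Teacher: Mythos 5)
Your proof is correct and follows essentially the same route as the paper's: direct chain-rule computations on functions of the moduli $|x|,|y|,|z|$, rotation invariance for the vanishing of $e_x,e_y,e_z$, and the real-valuedness/conjugation argument for $\| \nabla \varphi_j\|=\|\overline{\nabla}\varphi_j\|$. The only (harmless) divergence is in the final estimate: the paper computes the exact squared norm $\frac{(\varphi')^2}{4|x|^2}\bigl(1+\frac{|y|^2+|z|^2}{|x|^2}\bigr)\le \frac{5}{|x|^2}<\frac{9}{|x|^2}$, so strictness is automatic, whereas your term-by-term bound only reaches $9/|x|^2$ and needs your (valid) extra observation that $\varphi'(1/2)=0$, so that $|\varphi'|=4$ and $\sqrt{|y|^2+|z|^2}=|x|/2$ cannot occur simultaneously.
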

\begin{proof}
Since $\vphi _j$ is a function of $|x|$, $|y|$, $|z|$, 
it is preserved by the rotation vector fields $e_x$, $e_y$, $e_z$. 
Hence, we have $e_x(\vphi _j)=e_y(\vphi _j)=e_z(\vphi _j)=0$. 

Since $\vphi _j$ is a real-valued function, $\overline {\nabla } \vphi _j$ is the complex conjugation of $\nabla \vphi _j$. 
Hence, $\| \nabla \vphi _j \|=\| \overline {\nabla } \vphi _j \|$.
By explicit computations of derivatives, we have 
\begin{eqnarray*}
\frac{\partial \vphi_1}{\partial x}&=&-\frac{\sqrt{|y|^2+|z|^2}}{|x|^2}
\vphi'\Big(\frac{\sqrt{|y|^2+|z|^2}}{|x|}\Big)\frac{\partial |x|}{\partial x}
=-\frac{\sqrt{|y|^2+|z|^2}}{2|x|x}
\vphi'\Big(\frac{\sqrt{|y|^2+|z|^2}}{|x|}\Big), \\
\frac{\partial \vphi_1}{\partial y}
&=&\frac{|y|}{|x|\sqrt{|y|^2+|z|^2}}\vphi'\Big(\frac{\sqrt{|y|^2+|z|^2}}{|x|}\Big)
\frac{\partial |y|}{\partial y}
=\frac{\bar{y}}{2|x|\sqrt{|y|^2+|z|^2}}
\vphi'\Big(\frac{\sqrt{|y|^2+|z|^2}}{|x|}\Big),\\
\frac{\partial \vphi_1}{\partial z}
&=&\frac{|z|}{|x|\sqrt{|y|^2+|z|^2}}\vphi'\Big(\frac{\sqrt{|y|^2+|z|^2}}{|x|}\Big)
\frac{\partial |z|}{\partial z}
=\frac{\bar{z}}{2|x|\sqrt{|y|^2+|z|^2}}
\vphi'\Big(\frac{\sqrt{|y|^2+|z|^2}}{|x|}\Big).
\end{eqnarray*}
Hence, 
\begin{eqnarray*}
\| \nabla \vphi _1 \|^2
&=&
\Bigl|\frac{\partial \vphi_1}{\partial x}\Big|^2
+\Big|\frac{\partial \vphi_1}{\partial y}\Big|^2
+\Big|\frac{\partial \vphi_1}{\partial z}\Big|^2\\
&=&\frac{1}{4|x|^2}\Big(\frac{|y|^2+|z|^2}{|x|^2}+1\Big) \Big(\vphi'\Big(\frac{\sqrt{|y|^2+|z|^2}}{|x|}\Big)\Big)^2\\
&<&\frac{1}{4|x|^2}\cdot \frac{5}{4}\cdot 4^2=\frac{5}{|x|^2}<\frac{9}{|x|^2}. 
\end{eqnarray*}
Then it follows that 
$$\| \nabla \vphi _1 \|=\| \overline {\nabla } \vphi _1 \|<\frac{3}{|x|}$$ if $x\ne 0$.  
Similarly, 
$$\| \nabla \vphi _2 \|=\| \overline {\nabla } \vphi _2 \|<\frac{3}{|y|} \; \text{ if $y\ne 0$}, \;\;
\| \nabla \vphi _3 \|=\| \overline {\nabla } \vphi _3 \|<\frac{3}{|z|} \; \text{ if $z\ne 0$}. $$

When $xyz\ne 0$, $e_0$ is defined and $e_0(\vphi _j)$ can be computed as follows: 
\begin{eqnarray*}
e_0(\vphi _1)&=&
\frac{1}{|x|^2}(x\frac{\partial \vphi _1}{\partial x}+\bar{x}\frac{\partial \vphi _1}{\partial \bar{x}})
+\frac{1}{|y|^2}(y\frac{\partial \vphi _1}{\partial y}+\bar{y}\frac{\partial \vphi _1}{\partial \bar{y}})
+\frac{1}{|z|^2}(z\frac{\partial \vphi _1}{\partial z}+\bar{z}\frac{\partial \vphi _1}{\partial \bar{z}})\\
&=&\Big(-\frac{\sqrt{|y|^2+|z|^2}}{|x|^3}+\frac{1}{|x|\sqrt{|y|^2+|z|^2}}+\frac{1}{|x|\sqrt{|y|^2+|z|^2}}\Big)
\vphi'\Big(\frac{\sqrt{|y|^2+|z|^2}}{|x|}\Big) \\
&=&\frac{2|x|^2-|y|^2-|z|^2}{|x|^3\sqrt{|y|^2+|z|^2}}\vphi'\Big(\frac{\sqrt{|y|^2+|z|^2}}{|x|}\Big). 
\end{eqnarray*}
Similarly, 
\begin{eqnarray*}
e_0(\vphi _2)=\dfrac{2|y|^2-|z|^2-|x|^2}{|y|^3\sqrt{|z|^2+|x|^2}}\vphi '\big(\frac{\sqrt{|z|^2+|x|^2}}{|y|}\big), \\
e_0(\vphi _3)=\dfrac{2|z|^2-|x|^2-|y|^2}{|z|^3\sqrt{|x|^2+|y|^2}}\vphi '\big(\frac{\sqrt{|x|^2+|y|^2}}{|z|}\big). 
\end{eqnarray*}
\end{proof}

Now we are ready to prove the following theorem. 
\vspace{4pt}

\begin{theorem}~\label{thm: main}
For each $t\in [0,1]$, $X_t$ is a convex symplectic submanifold of $\C^3$. 
In particular, $\left\{ X_t \right\}_{t\in [0, 1]}$ is a homotopy of Liouville manifolds. 
\end{theorem}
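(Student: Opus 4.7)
The plan is to check three properties of the family $X_t=f_t^{-1}(a^{-1}e^{i\theta})\cap\overline{D^6_1}$: (i) each $X_t$ is a smooth compact real $4$-manifold with boundary; (ii) the standard symplectic form $\omega_0=\frac{i}{2}\sum dz_j\wedge d\bar z_j$ restricts to a non-degenerate form on $X_t$; and (iii) the Liouville form $\lambda_0=\frac{1}{2}\sum(x_jdy_j-y_jdx_j)$ restricts to a primitive whose associated Liouville vector field points outward along $\partial X_t\subset S^5_1$. Smoothness in $t$ of the whole data---needed for the Liouville homotopy conclusion---will then follow from the uniformity in $t$ of the estimates below.

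First I would establish (i) by a quantitative extension of Lemma~\ref{thm: tube}. Concretely, the goal is to show that $\nabla f_t(p)\ne 0$ at every $p\in f_t^{-1}(a^{-1}e^{i\theta})\cap\overline{D^6_1}$, and that at every boundary point $p\in f_t^{-1}(a^{-1}e^{i\theta})\cap S^5_1$ the vectors $\nabla f_t(p)$ and $(\bar x,\bar y,\bar z)$ are $\C$-linearly independent. Decomposing $\nabla f_t=(1-t)\nabla f+t\nabla h$, the perturbation $\nabla h-\nabla f$ is controlled by the bounds $\|\nabla\vphi_1\|<3/|x|$ and its cyclic analogues from Lemma~\ref{phi }, together with the disjointness of $\mathrm{supp}(\vphi_1),\mathrm{supp}(\vphi_2),\mathrm{supp}(\vphi_3)$ (so at any point at most one $\vphi_j$ contributes a derivative). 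Lemma~\ref{thm: ball} forces at least one of $|x|,|y|,|z|$ to exceed $m/a$, keeping the denominators in those bounds uniformly away from $0$. The numerical choices $a>12M$, $a>m^2(m+3)$, $m=30M$ are designed so that the argument of Lemma~\ref{thm: tube} for $t=0$ survives this perturbation for all $t\in[0,1]$.

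Next, for (ii) I would use the general identity
\[
\{\mathrm{Re}\,F,\mathrm{Im}\,F\}_{\omega_0}=|\nabla F|^2-|\bar\nabla F|^2,
\]
valid for any smooth $F\colon\C^3\to\C$ whose level set in question is a smooth real $4$-manifold. It implies that $\omega_0|_{F^{-1}(c)}$ is non-degenerate exactly when the right-hand side is positive. Applying this to $F=f_t$, holomorphy of $f$ gives $\bar\nabla f_t=t\,\bar\nabla h$, and $\bar\nabla h$ comes only from the antiholomorphic derivatives of $\vphi_1,\vphi_2,\vphi_3$ multiplying the monomials $x^p,y^q,z^r$. Lemma~\ref{phi } together with $|x|,|y|,|z|\le 1$ gives a clean upper bound on $|\bar\nabla f_t|$, while Step~1 gives a matching lower bound on $|\nabla f_t|$, yielding the inequality $|\nabla f_t|^2>|\bar\nabla f_t|^2$ on all of $X_t$. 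For (iii), let $\xi_t$ be the Liouville field on $X_t$; it is the $\omega_0$-orthogonal projection of the ambient radial field $\xi_0=\frac{1}{2}\sum(x_j\partial_{x_j}+y_j\partial_{y_j})$ onto $TX_t$. Since $\xi_0(\rho)=\rho\equiv 1$ on $S^5_1$ for $\rho=|x|^2+|y|^2+|z|^2$, and the normal correction $\xi_0-\xi_t$, spanning $(TX_t)^{\omega_0}=\langle X_{\mathrm{Re}\,f_t},X_{\mathrm{Im}\,f_t}\rangle$, has a small effect on $\rho$ because of the same transversality used in Step~1, one obtains $\xi_t(\rho)|_{\partial X_t}>0$, i.e., $\xi_t$ is outward-pointing.

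The main obstacle is Step~2: for $t>0$ the function $f_t$ is genuinely non-holomorphic, so there is no a priori reason for $\omega_0|_{X_t}$ to remain symplectic, and the inequality $|\nabla f_t|^2>|\bar\nabla f_t|^2$ must be verified throughout the deformation. All the delicate choices in the construction---the shape of the cut-off $\vphi$ with $|\vphi'|\le 4$, the disjointness of the supports of $\vphi_1,\vphi_2,\vphi_3$, the precise bound $\|\nabla\vphi_j\|<3/|\cdot|$ in Lemma~\ref{phi }, and the numerical conditions $a>12M$, $a>m^2(m+3)$, $m=30M$---are tuned precisely so that both this inequality and the transversality of Step~1 hold uniformly in $t\in[0,1]$.
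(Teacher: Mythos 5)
Your proposal follows essentially the same route as the paper: the crux is the pointwise inequality $\|\nabla f_t\|>\|\overline{\nabla} f_t\|$ on the region where the cut-offs are active (exactly the criterion, via your Poisson-bracket identity, that the paper uses to conclude $X_t$ is symplectic), proved from the gradient bounds of Lemma~\ref{phi }, the lower bound $\max\{|x|,|y|,|z|\}>m/a$ as in Lemma~\ref{thm: ball}, and the choice of $a$, followed by the observation that the radial Liouville vector field is outward-pointing along $\partial X_t$. One caution: the required lower bound on $\|\nabla f_t\|$ is not a by-product of your transversality Step~1 but comes from the dominance of the term $a(yz,zx,xy)$, whose norm exceeds $\frac{a}{6}|x|^2>5M|x|$ on the set where $\vphi_1'\neq 0$, and the comparison with $\|\overline{\nabla} f_t\|\leq 3t|x|^{p-1}$ must retain the common factor $|x|$ (passing to constant bounds via $|x|,|y|,|z|\leq 1$ would fail where $|x|$ is of order $m/a$).
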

\begin{proof}
Where $\vphi _1, \vphi _2, \vphi _3$ all vanish, 
the defining function $$f_t=(1-t)(x^p+y^q+z^r)+axyz$$ is holomorphic. 
Then, in such a region, $X_t$ is obviously a symplectic submanifold of $\C^3$. 
Since at least two of $\vphi _1, \vphi _2, \vphi _3$ vanish,  
it is enough to prove that $\mathrm{supp}(\vphi _1)\cap X_t$ is a symplectic submanifold of $\C^3$. 
Moreover, if the condition $$\sqrt{|y|^2+|z|^2}\leq \frac{|x|}{6}$$ is satisfied, 
there the defining function $f_t$ is again holomorphic. 
Hence, in the following, we argue under the assumptions 
$$\frac{|x|}{6}<\sqrt{|y|^2+|z|^2}<\frac{|x|}{2}, \;\; h(x,y,z)=\vphi _1x^p+axyz. $$

Now we prove the inequality $\| \nabla f_t \|>\|\overline {\nabla }f_t \|$ on $X_t$. 
The holomorphic and anti-holomorphic gradient vectors of $f_t$ are described as follows;
\begin{eqnarray*}
\nabla f_t&=&(1-t)\nabla f+t\nabla h\\
&=&(1-t)\begin{pmatrix} px^{p-1}+ayz \\ qy^{q-1}+azx \\ rz^{r-1}+axy \end{pmatrix}
+t\begin{pmatrix} p\vphi _1x^{p-1}+ayz \\ azx \\ axy\end{pmatrix}
+tx^p\nabla \vphi _1\\
&=&a\begin{pmatrix} yz \\ zx \\ xy \end{pmatrix}
+\begin{pmatrix} (1-t+t\vphi _1)px^{p-1} \\ (1-t)qy^{q-1} \\ (1-t)rz^{r-1} \end{pmatrix}
+tx^p\nabla \vphi _1, \\
& & \\
\overline {\nabla }f_t &=&(1-t)\overline {\nabla } f+t\overline {\nabla } h
=t\overline {\nabla } h=tx^p\overline {\nabla } \vphi _1. 
\end{eqnarray*}
Since the same argument as the proof of Lemma~\ref{thm: ball} works on $X_t$, 
for any $(x,y,z)\in \mathrm{supp}(\vphi _1)\cap X_t$, we have the inequality 
$$|x|=\max \left\{ |x|, |y|, |z|\right\}>\frac{m}{a}=\frac{30M}{a}.$$
Then it follows that 
\begin{eqnarray*}
\| \nabla f_t \| -\| \overline {\nabla }f_t  \|
&>&a|x|\sqrt{|y|^2+|z|^2}-\sqrt{3}M|x|-2t|x|^p\| \nabla \vphi _1 \| \\
&>&\frac{a}{6}|x|^2-(\sqrt{3}M+6)|x|=\Big(\frac{a|x|}{6}-(\sqrt{3}M+6)\Big)|x|\\
&>&(5M-\sqrt{3}M-6)|x|>3(M-2)|x|>0. 
\end{eqnarray*}
Hence, each $X_t$ is a symplectic submanifold of $\C^3$. 
Moreover, the gradient vector field of the squared distance function 
$(|x|^2+|y|^2+|z|^2) |_{X_t}$ restricted to $X_t$ is Liouville and outward transverse to the boundary $\partial X_t$. 
Therefore, $X_t$ is a convex symplectic submanifold of $\C^3$, and in particular, a Liouville manifold. 
\end{proof}

\vspace{4pt}

\begin{theorem}~\label{thm: main 2}
For each $t\in [0,1]$, the map $g|_{X_t}\colon X_t \to \C$ has exactly $(p+q+r)$ critical points 
$$\big(a^{-\frac{1}{p}}e^{\frac{i\theta}{p}}{(u_p)}^j, 0, 0\big), \; \big(0, a^{-\frac{1}{q}}e^{\frac{i\theta}{q}}{(u_q)}^k, 0\big), \; \big(0,0, a^{-\frac{1}{r}}e^{\frac{i\theta}{r}}{(u_r)}^l\big),$$
where $u_n=\exp {(\frac{2\pi i}{n})}$, 
$0\leq j \leq p-1$, $0\leq k \leq q-1$, and $0\leq l \leq r-1$. 
Moreover, $g|_{X_1}$ is a Lagrangian torus fibration whose singularities are of Lefschetz type. 
\end{theorem}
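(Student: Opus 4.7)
\emph{Plan.} The theorem has two halves: the location of the $p+q+r$ critical points of $g|_{X_t}$ for every $t\in[0,1]$, and, at $t=1$, the Lagrangian torus fibration structure with Lefschetz singularities. The critical point count is verified directly, while the Lefschetz/Lagrangian conclusion at $t=1$ is reached via Eliasson's Theorem~\ref{Eliasson}, which requires checking the integrability of the Hamiltonian system $(X_1,\omega_0|_{X_1},g|_{X_1})$ and that the 2-jet of $g|_{X_1}$ at each critical point is of focus-focus (Lefschetz) type. For the critical points, consider a candidate $(x_0,0,0)$ with $x_0^p=a^{-1}e^{i\theta}$: since $\sqrt{|y|^2+|z|^2}/|x|=0$ there, one has $\vphi_1\equiv 1$ and $\vphi_2\equiv\vphi_3\equiv 0$ on a neighbourhood, so $f_t=x^p+(1-t)(y^q+z^r)+axyz$ is holomorphic with $\partial_yf_t=\partial_zf_t=0$ and $\partial_xf_t=px_0^{p-1}\neq 0$. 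Thus $T_{(x_0,0,0)}X_t$ is the real four-plane $\{(0,\beta,\gamma):\beta,\gamma\in\C\}$, on which $dg$ vanishes because $\partial_yg=e^{2\pi i/3}\bar y$ and $\partial_zg=e^{4\pi i/3}\bar z$ both vanish at $y=z=0$; symmetric arguments yield $p+q+r$ critical points. To rule out others I argue stratum by stratum: on $\{xyz\neq 0\}$, the real kernel of $dg$ in $T\C^3$ is the four-plane $\mathrm{span}_\R\{e_x,e_y,e_z,e_0\}$ (each vector field annihilates $|x|^2,|y|^2,|z|^2$), so a critical point would force $T_pX_t$ to coincide with it; evaluating $df_t$ on these four vectors using Lemma~\ref{phi } for $\nabla\vphi_j$ and Lemma~\ref{thm: ball} for the coordinate lower bounds, I show at least one evaluation is nonzero, a contradiction. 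Analogous computations on the coordinate-plane strata exclude the remaining candidates.

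At $t=1$, I compute the 2-jet of $g|_{X_1}$ at each critical point. Using $(y,z)$ as local coordinates on $X_1$ via the holomorphic implicit function $x=x(y,z)$ determined by $x^p+axyz=a^{-1}e^{i\theta}$, one finds $x-x_0=-\frac{a}{p\,x_0^{p-2}}\,yz+O(|(y,z)|^3)$, so that the quadratic part becomes
\[
Q(y,z)=e^{2\pi i/3}|y|^2+e^{4\pi i/3}|z|^2+\alpha\,yz+\bar\alpha\,\bar y\bar z,\qquad \alpha=-\frac{a\,\bar x_0}{p\,x_0^{p-2}}.
\]
Splitting $Q=Q_1+iQ_2$ with respect to the induced symplectic form $\omega|_{X_1}=dy_1\wedge dy_2+dz_1\wedge dz_2$ at the critical point (after a unitary rotation of $y,z$ we may take $\alpha>0$), a direct calculation gives $\{Q_1,Q_2\}=0$, while the Hamiltonian vector fields $J_{Q_1}$ and $J_{Q_2}$ have spectra $\{\pm\sqrt{4|\alpha|^2-1}\}$ (real) and $\{\pm i\sqrt 3\}$ (purely imaginary), each with multiplicity two. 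A simultaneous diagonalisation on the complexified joint eigenspaces then shows the generic linear combination $aJ_{Q_1}+bJ_{Q_2}$ has spectrum $\{\pm a\sigma\pm ib\tau\}$ with $\sigma=\sqrt{4|\alpha|^2-1}>0$ and $\tau=\sqrt 3>0$ (since $|\alpha|\gg 1$ under our standing assumption on $a$), which is precisely the focus-focus/Lefschetz pattern of a non-degenerate singularity of maximal corank.

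It remains to establish integrability of $(X_1,\omega_0|_{X_1},g|_{X_1})$, after which Eliasson's theorem promotes the focus-focus 2-jet at each critical point to a local Lagrangian torus fibration with Lefschetz singularity, and these local pictures glue with the middle-region structure into the global Lagrangian torus fibration $g|_{X_1}$. Since $\mathrm{Re}\,g$ and $\mathrm{Im}\,g$ are $\R$-linear combinations of the ambient moment-map components $|x|^2,|y|^2,|z|^2$, their Hamiltonian vector fields on $\C^3$ lie in $\mathrm{span}\{e_x,e_y,e_z\}$. In the middle region, where $h=f_1=axyz$, the combinations $e_x-e_y$ and $e_y-e_z$ preserve $h$ and hence are tangent to $X_1$, so the induced Hamiltonian $T^2$-action is tangential to $X_1$ and its orbits---precisely the fibers of $g|_{X_1}$ there---are Lagrangian tori. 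In the axis and transition regions $V_{\mathrm{Re}\,g}$ and $V_{\mathrm{Im}\,g}$ acquire nonzero normal components in $TX_1^{\perp_\omega}=\mathrm{span}\{V_{\mathrm{Re}\,h},V_{\mathrm{Im}\,h}\}$, and I will use the explicit $e_0(\vphi_j)$ formulas of Lemma~\ref{phi } to check that the resulting correction---a product-of-brackets expression in $\{\mathrm{Re}\,g,\mathrm{Re}\,h\}$, $\{\mathrm{Re}\,g,\mathrm{Im}\,h\}$ and their $\mathrm{Im}\,g$-counterparts, divided by $\{\mathrm{Re}\,h,\mathrm{Im}\,h\}$---vanishes identically on $X_1$. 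The main obstacle is precisely this integrability check in the transition regions, where the bump functions $\vphi_j$ break the ambient torus symmetry; the derivative estimates of Lemma~\ref{phi } are tailored to make the computation tractable, but the bookkeeping is delicate.
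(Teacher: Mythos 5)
Your overall route is the same as the paper's (locate the critical points using the frame $e_x,e_y,e_z,e_0$, compute the $2$-jet at the axis points, and invoke Eliasson's Theorem~\ref{Eliasson}), and your spectral identification of the focus--focus type is a legitimate variant of the paper's explicit Hessian computation in Lemma~\ref{Hessian}. However, your argument that there are no critical points off the axes has a logical flaw. A critical point of the $\C$-valued map $g|_{X_t}$ only means that $dg$ restricted to $T_pX_t$ has rank at most one; it does \emph{not} force $T_pX_t$ to coincide with $\ker dg=\mathrm{span}_\R\{e_x,e_y,e_z,e_0\}$. Consequently, showing that ``at least one'' of the evaluations $e_x(f_t),e_y(f_t),e_z(f_t),e_0(f_t)$ is nonzero only rules out a rank-zero drop; regularity requires that these four complex numbers span $\C$ over $\R$. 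That rank-two statement is the actual content of the paper's proof, obtained from the quantitative phase estimates showing $e_z(f_t)$ and $e_0(f_t)$ are $\R$-linearly independent (the two $\arcsin$ bounds), and the analogous point on the coordinate-plane strata is that $axy$ and $iaxy$ span $\C$. As written, your criterion would not exclude rank-one critical points, so the count of $p+q+r$ critical points is not established.

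The second gap is the integrability check you defer, and it is not a peripheral piece of bookkeeping: it is needed precisely where Eliasson's theorem is applied. The critical points lie on the coordinate axes, where (say) $\vphi_1\equiv 1$ and $h=x^p+axyz$; this is outside your ``middle region'' $h=axyz$, so the Hamiltonian $T^2$-action argument does not cover any neighbourhood of a critical point, and without $\{\mathrm{Re}\,g,\mathrm{Im}\,g\}=0$ near the singular point Theorem~\ref{Eliasson} cannot be invoked. In the paper this is handled by a short direct computation valid on the dense set $xyz\neq 0$ in the region $|x|\geq|y|\geq|z|$ (hence covering both the transition region and the near-axis region, with involutivity everywhere following by continuity): since $(e_y-e_z)(h)=0$, the vector $e_y-e_z$ is tangent to the fibers of $g|_{X_1}$, a second tangent vector can be written as $b_0e_0+b_1e_x+b_2e_y+b_3e_z$, and then
\[
\omega_0\bigl(b_0e_0+b_1e_x+b_2e_y+b_3e_z,\; e_y-e_z\bigr)
=\frac{b_0}{|y|^2}\,\omega_0(E_y,e_y)-\frac{b_0}{|z|^2}\,\omega_0(E_z,e_z)=b_0-b_0=0,
\]
so the fibers are Lagrangian and the system is integrable. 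You would need to carry out this (or an equivalent) verification rather than postpone it; the ``correction-term'' scheme you sketch is exactly the delicate part and is left unproved. A minor further point: in your spectral argument you should also note why all four joint eigenvalue combinations $\pm a\sigma\pm ib\tau$ occur (e.g.\ because the spectrum of a real Hamiltonian matrix is conjugation-invariant), which rules out the degenerate configuration in which one eigenspace of $J_{Q_1}$ lies entirely in a single eigenspace of $J_{Q_2}$.
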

\begin{proof}
First, we determine all the critical points of the function $g|_{X_t}$. 
Suppose that the point $(x,y,z)$ in $X_t$ satisfies $xyz\ne 0$. 
Then it is a regular point of $g|_{X_t}$. This is proved as follows. 

Recall that when $xyz\ne 0$, 
the tangent space of the level set of $g$ is spanned over $\R $ by the four vectors $e_x, e_y, e_z$ and $e_0$. 
Then it is enough to show that the level sets of $f_t$ and $g$ are transversal at $(x,y,z)$, 
and so let us prove $$\langle e_x(f_t), e_y(f_t), e_z(f_t), e_0(f_t) \rangle_{\R}=\C.$$
By symmetry, we may assume that $|x|\geq |y|\geq |z|>0$. 
Then $f_t$ can be described as $$f_t=(1-t+t\vphi _1)x^p+(1-t)(y^q+z^r)+axyz. $$
By Lemma~\ref{phi } and explicit computations, we obtain the following: 
\begin{eqnarray*}
e_x(f_t)&=&i\Big((1-t+t\vphi _1)px^p+axyz\Big), \\
e_y(f_t)&=&i\Big((1-t)qy^q+axyz\Big), \\
e_z(f_t)&=&i\Big((1-t)rz^r+axyz\Big), \\
e_0(f_t)&=&
\Big(\dfrac{(1-t+t\vphi _1)p}{|x|^2}+t\dfrac{2|x|^2-|y|^2-|z|^2}{|x|^3\sqrt{|y|^2+|z|^2}}
\vphi '\big(\frac{\sqrt{|y|^2+|z|^2}}{|x|}\big) \Big)x^p \\
& &+(1-t)(\frac{qy^q}{|y|^2}+\frac{rz^r}{|z|^2})+(\dfrac{1}{|x|^2}+\dfrac{1}{|y|^2}+\dfrac{1}{|z|^2})axyz.
\end{eqnarray*}
Then $e_z(f_t)$ is nonzero, since $|axyz|$ is greater than $|(1-t)rz^r|$. 
Indeed, we have 
$$\Big|\frac{(1-t)rz^r}{axyz}\Big|=(1-t)|z|^{r-2}\frac{|z|}{|y|}\frac{r}{a|x|}<\frac{M}{30M}=\frac{1}{30}.$$
Hence, the argument of the nonzero complex number $e_z(f_t)$ can be estimated as follows: 
\begin{eqnarray}
\Big| \arg {\Big( \frac{e_z(f_t)}{ixyz} \Big)} \Big| <\arcsin {\big(\frac{1}{30}\big)}<\frac{\pi }{6}.
\end{eqnarray}
Similarly, $e_0(f_t)$ is nonzero because of the following inequalities. 
\begin{eqnarray*}
\Bigg|\Big(\dfrac{(1-t+t\vphi _1)p}{|x|^2}+t\dfrac{2|x|^2-|y|^2-|z|^2}{|x|^3\sqrt{|y|^2+|z|^2}}
\vphi '\big(\frac{\sqrt{|y|^2+|z|^2}}{|x|}\big) \Big)x^p+(1-t)(\frac{qy^q}{|y|^2}+\frac{rz^r}{|z|^2})\Bigg|\\ 
<(p+48)|x|^{p-2}+q|y|^{q-2}+r|z|^{r-2}<3M+48, 
\end{eqnarray*}
\begin{eqnarray*}
\Big| (\dfrac{1}{|x|^2}+\dfrac{1}{|y|^2}+\dfrac{1}{|z|^2})axyz \Big|
>(\frac{1}{|y|^2}+\frac{1}{|z|^2})|axyz|=\Big(\frac{|z|}{|y|}+\frac{|y|}{|z|}\Big)a|x| \geq 2a|x|>60M. 
\end{eqnarray*}
Here we used the estimates $a|x|>30M$ and 
\begin{eqnarray*}
\Bigg| \dfrac{2|x|^2-|y|^2-|z|^2}{|x|\sqrt{|y|^2+|z|^2}}
\vphi '\big(\frac{\sqrt{|y|^2+|z|^2}}{|x|}\big) \Bigg|
&<& \frac{4\cdot 2|x|}{\sqrt{|y|^2+|z|^2}} <48 \;\;\;\;  \text{if $\frac{|x|}{6}<\sqrt{|y|^2+|z|^2}$}, \\
&=&0 \hspace{100pt}  \text{otherwise}. 
\end{eqnarray*}
Moreover, the argument of $e_0(f_t)$ is estimated as 
\begin{eqnarray}
\Big| \arg {\Big( \frac{e_0(f_t)}{xyz} \Big)} \Big| <\arcsin {\Big(\frac{3M+48}{60M}\Big)}<
\arcsin {\big(\frac{1}{3}\big)}<\frac{\pi }{6}.
\end{eqnarray}
Then, by (2.1) and (2.2), the two complex numbers $e_z(f_t)$ and $e_0(f_t)$ are linearly independent over $\R$, 
which implies $$\langle e_x(f_t), e_y(f_t), e_z(f_t), e_0(f_t)\rangle _{\R}=\C.$$ Hence, $(x,y,z)$ is a regular point of $g|_{X_t}$.

Next we suppose $(x,y,z)\in X_t$ satisfies $xyz=0$. 
We want to show that if only one of $x$, $y$ and $z$ is zero, then $(x,y,z)$ is a regular point of $g|_{X_t}$. 
By symmetry we may assume that $z=0$ and $(x,y,0)$ is outside $\mathrm{supp}(\vphi _2)$. 
Now recall that the tangent space of $g^{-1}(\ast )$ at such a point is spanned over $\R$ 
by the four vectors $$e_x, \; e_y, \; 
\dfrac{\partial }{\partial z}+\dfrac{\partial }{\partial \bar{z}}, \;
i(\dfrac{\partial }{\partial z}-\dfrac{\partial }{\partial \bar{z}}).$$
Since $\frac{\partial \vphi _1}{\partial z}|_{z=0}=\frac{\partial \vphi _1}{\partial \bar{z}}|_{z=0}=0$, 
it follows that 
\begin{eqnarray*}
e_x(f_t)|_{z=0}&=&i(1-t+t\vphi _1)px^p, \\
e_y(f_t)|_{z=0}&=&i(1-t)qy^q, \\
(\frac{\partial }{\partial z}+\frac{\partial }{\partial \bar{z}})(f_t)|_{z=0}&=&axy, \\
i(\dfrac{\partial }{\partial z}-\dfrac{\partial }{\partial \bar{z}})(f_t)|_{z=0}&=&iaxy. 
\end{eqnarray*}
When $xy$ is nonzero, $axy$ and $iaxy$ are linearly independent over $\R$. 
Thus, these four complex numbers span $\C$ over $\R$, which means that the point $(x,y,0)$ is a regular point of $g|_{X_t}$. 
Therefore, all the critical points of $g|_{X_t}$ lie on the union of the $x$-axis, the $y$-axis and the $z$-axis. 
Then, considering the defining equation of $X_t$, the possible critical points are 
$$
\big(a^{-\frac{1}{p}}e^{\frac{i\theta}{p}}{(u_p)}^j, 0, 0\big), \; \big(0, a^{-\frac{1}{q}}e^{\frac{i\theta}{q}}{(u_q)}^k, 0\big), \; \big(0,0, a^{-\frac{1}{r}}e^{\frac{i\theta}{r}}{(u_r)}^l\big),
$$
all of which are indeed critical points of $g|_{X_t}$. 

Finally, we show that $g|_{X_1}$ is a Lagrangian torus fibration whose singularities are of Lefschetz type. 
First, $g|_{X_1}$ is a torus fibration, since all the fibers are non-singular tori over the region where $h(x,y,z)=axyz$ holds. 
Next we show that the fibers of $g|_{X_1}$ are Lagrangian submanifolds. 
For this purpose, it is enough to argue on the open dense subset $X_1\cap \left\{xyz\ne 0 \right\}$ of $X_1$. 
Again by symmetry, we may assume that $|x|\geq |y|\geq |z|$. 
In this case, the defining function of $X_1$ is described as $$f_1(x,y,z)=h(x,y,z)=\vphi _1x^p+axyz.$$
Then we obtain 
\begin{eqnarray*}
e_x(h)&=&i\big(\vphi _1px^p+axyz\big), \\
e_y(h)&=&iaxyz, \\
e_z(h)&=&iaxyz, \\
e_0(h)&=&
\Big(\frac{\vphi _1p}{|x|^2}+\dfrac{2|x|^2-|y|^2-|z|^2}{|x|^3\sqrt{|y|^2+|z|^2}}\vphi '\big(\frac{\sqrt{|y|^2+|z|^2}}{|x|}\big) \Big)x^p \\
& &+(\dfrac{1}{|x|^2}+\dfrac{1}{|y|^2}+\dfrac{1}{|z|^2})axyz. 
\end{eqnarray*}
Recall that $X_1=h^{-1}(\frac{1}{a}e^{i\theta})$ and the tangent space of $g^{-1}(\ast )$ is spanned by $e_x$, $e_y$, $e_z$ and $e_0$. 
Since we have $(e_y-e_z)(h)=0$, we can take a basis of the tangent space of a fiber of $g|_{X_1}$ of the form 
$$e_y-e_z, \; b_0e_0+b_1e_x+b_2e_y+b_3e_z, $$
where $b_0, b_1, b_2, b_3$ are some real constants that depends on the point $(x,y,z)$. 
Substituting them into the standard symplectic form $\omega _0$ on $\C^3$, we obtain 
\begin{eqnarray*}
\omega _0 \big(b_0e_0+b_1e_x+b_2e_y+b_3e_z, e_y-e_z\big)=b_0\omega _0(e_0, e_y)-b_0\omega _0(e_0, e_z) \\
=\frac{b_0}{|y|^2}\omega _0\big(E_y, e_y\big)-\frac{b_0}{|z|^2}\omega _0\big(E_z, e_z\big)
=b_0-b_0=0, 
\end{eqnarray*}
which implies that the fiber of $g|_{X_1}$is a Lagrangian submanifold of $X_1$. 
Moreover, as is proved in Lemma~\ref{Hessian} below,  
the Hessian at each critical point coincides with that of Lefschetz singularity. 
Then, by Theorem~\ref{Eliasson}, it is indeed a Lefschetz singularity. 
Therefore, $g|_{X_1}$ is indeed a Lagrangian torus fibration whose 
$(p+q+r)$ critical points are all Lefschetz singularities. 
\end{proof}

Now we prove the following lemma to complete the proof of Theorem~\ref{thm: main 2}. 
\vspace{4pt}
\begin{lemma}~\label{Hessian}
The $2$-jet of each critical point of the map $g|_{X_1}\colon X_1\to \C$ is $\mathcal{A}$-equivalent to that of the Lefschetz singularity. 
\end{lemma}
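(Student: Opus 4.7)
The approach is to leverage Theorem~\ref{thm: main 2}: because $g|_{X_{1}}$ is already known to be a Lagrangian torus fibration, the real and imaginary parts of the 2-jet at any critical point Poisson-commute and therefore span a 2-dimensional abelian subalgebra of the algebra $\mathrm{Q}(4)$ of quadratic Hamiltonians on $(\R^{4},\omega_{0})$. By Eliasson's Theorem~\ref{Eliasson} and the four-fold classification of Cartan subalgebras reviewed immediately after it (elliptic--elliptic, elliptic--hyperbolic, hyperbolic--hyperbolic, focus--focus), it suffices to check that this subalgebra is of the focus--focus type, which by the paragraph following Theorem~\ref{Eliasson} is precisely the Lefschetz 2-jet.

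By the cyclic symmetry among the three coordinate axes I first reduce to a representative critical point, say $p_{0}=(x_{0},0,0)$ with $x_{0}^{p}=\frac{1}{a}e^{i\theta}$. Near $p_{0}$ the cut-off functions obey $\varphi_{1}\equiv 1$ and $\varphi_{2}=\varphi_{3}\equiv 0$ (the supports of $\varphi_{2},\varphi_{3}$ are bounded away from the $x$-axis), so $X_{1}$ is locally cut out by the holomorphic equation $x^{p}+axyz=\frac{1}{a}e^{i\theta}$. The holomorphic implicit function theorem, applicable because $\partial_{x}(x^{p}+axyz)(p_{0})=p\,x_{0}^{p-1}\neq 0$, expresses $X_{1}$ as a graph $x=x_{0}+u(y,z)$, and expanding the equation to second order gives $u(y,z)=-\frac{a}{p\,x_{0}^{p-2}}\,yz+O_{3}(y,z)$. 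Substituting into $g$ and using $|u|^{2}=O_{4}(y,z)$, the 2-jet of $g|_{X_{1}}-g(p_{0})$ at $p_{0}$ is the complex-valued quadratic form
\[
Q(y,z)=\gamma\,yz+\bar\gamma\,\bar y\bar z+e^{2\pi i/3}|y|^{2}+e^{4\pi i/3}|z|^{2},\qquad \gamma:=-\frac{a\bar x_{0}}{p\,x_{0}^{p-2}}\in\C^{*}.
\]
A phase rotation $(y,z)\mapsto(e^{i\lambda}y,e^{i\mu}z)$ with $\lambda+\mu=-\arg\gamma$ preserves the $|\cdot|^{2}$-terms and normalizes $\gamma>0$.

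Writing $Q=Q_{1}+iQ_{2}$ and forming the Hamiltonian operators $A_{j}=JH_{j}$ of the Hessians, a direct $4\times 4$ computation in the $x$-axis case yields
\[
\operatorname{Spec}(A_{2}^{2})=\bigl\{(4\gamma^{2}-3)\pm 4\sqrt{3}\,\gamma\,i\bigr\}\quad (\text{each with multiplicity }2),
\]
whence the four eigenvalues of $A_{2}$ form a complex-conjugate quadruple $\pm a'\pm ib'$ off both the real and imaginary axes. This pattern is exactly the focus--focus signature in the Williamson classification, so Eliasson's theorem identifies the germ of $g|_{X_{1}}$ at $p_{0}$ with the canonical focus--focus model $(h_{1},h_{2})=(x_{1}y_{2}-x_{2}y_{1},\,x_{1}y_{1}+x_{2}y_{2})$ and hence with the 2-jet of the Lefschetz singularity $(u_{1}+iu_{2})^{2}+(v_{1}+iv_{2})^{2}$. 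The analogous computation at the $y$- and $z$-axis critical points is the main technical obstacle: there the cube-root-of-unity coefficient standing in front of the surviving $|\cdot|^{2}$-term causes the $xz$- and $\bar x\bar z$-coefficients of the 2-jet to fail to be complex conjugates, so the spectral calculation is less symmetric than in the $x$-axis case. Nevertheless the same Hessian--Hamiltonian analysis goes through, and the standing hypothesis $a>12M$ keeps $|\gamma|$ (of order $a^{2-3/p}/p$, $a^{2-3/q}/q$ or $a^{2-3/r}/r$ respectively) large enough that a generic real combination $cA_{1}+dA_{2}$ has eigenvalues off both coordinate axes in $\C$, ruling out the three non-focus--focus Williamson types. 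An appeal to Eliasson's theorem then completes the identification of the 2-jet with that of the Lefschetz singularity at every critical point.
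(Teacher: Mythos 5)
Your reduction to the critical point $(x_0,0,0)$ and your computation of the $2$-jet, namely $Q(y,z)=\gamma yz+\bar\gamma\bar y\bar z+e^{2\pi i/3}|y|^2+e^{4\pi i/3}|z|^2$ with $2|\gamma|=\lambda=\frac{2}{p}a^{\frac{2p-3}{p}}$, coincide with the paper's (the paper then finishes by conjugating the pair of Hessians by an explicit orthogonal matrix to the Lefschetz pair $(v_1^2-v_2^2+w_1^2-w_2^2,\,2(v_1v_2+w_1w_2))$, using $\lambda>1$). Your alternative plan --- identify the span of $\mathrm{Re}\,Q$ and $\mathrm{Im}\,Q$ as a focus--focus Cartan subalgebra via its eigenvalue signature --- is viable, but the verification you give is wrong at the decisive step. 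Here $Q_2=\mathrm{Im}\,Q=\frac{\sqrt3}{2}(|y|^2-|z|^2)$, so $A_2=JH_2$ generates a pair of opposite rotations: its spectrum is $\{\pm\sqrt3\,i\}$, each with multiplicity two, hence $A_2^2=-3I$ and $\mathrm{Spec}(A_2^2)=\{-3\}$, not the complex quadruple you claim. No single element equal to $A_2$ can carry the focus--focus signature; one must take $c_1A_1+c_2A_2$ with $c_1c_2\neq 0$. Carrying that out correctly (keeping the term $-\frac12(|y|^2+|z|^2)$ of $\mathrm{Re}\,Q$, which your numbers silently discard) gives the eigenvalues $\pm|c_1|\sqrt{\lambda^2-1}\pm\sqrt3\,c_2\,i$, which form a genuine focus--focus quadruple precisely because $\lambda>1$; your value $(4\gamma^2-3)\pm4\sqrt3\,\gamma\,i$ is what one would get for $2\gamma\,\mathrm{Re}(yz)+Q_2$ after dropping that term, and is impossible for $A_2$ itself. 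As written, the identification of the focus--focus type --- the whole content of the lemma --- is therefore not established, though it is repairable along exactly these lines.

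A second, lesser problem is your handling of the remaining critical points. You invoke cyclic symmetry at the outset to reduce to the $x$-axis, yet later call the $y$- and $z$-axis points ``the main technical obstacle'' and merely assert that the analysis goes through. In fact the symmetry reduction is already complete: the cyclic permutation of the coordinates (together with the permutation of $(p,q,r)$) preserves $X_1$ up to relabelling and carries $g$ to $e^{\pm 2\pi i/3}g$, and multiplication of $g$ by a unit constant is a rotation of the target, which affects neither $\mathcal{A}$-equivalence nor the symplectic type of the span of the quadratic parts; since your computation is done for an arbitrary exponent $p$, the other axes require no new work. The worry about non-conjugate coefficients there, and the appeal to $a>12M$ to ``rule out the non-focus--focus types,'' are red herrings: the inequality that matters is $\lambda=\frac{2}{p}a^{\frac{2p-3}{p}}>1$, exactly as recorded in the paper.
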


\begin{proof}
We put $x_0=(\frac{1}{a})^{\frac{1}{p}}e^{\frac{i\theta}{p}}$ and $s=x-x_0$. 
It is enough to show that the critical point $(x_0, 0, 0)$ has the same $2$-jet as that of the Lefschetz singularity.  
The Hessian at the critical point $(x_0, 0, 0)$ can be computed as follows. 
\begin{eqnarray*}
x^p+axyz=x_0^p &\Longleftrightarrow& a(s +x_0)yz=-s(px_0^{p-1}+\cdots +px_0s^{p-2}+s^{p-1})\\
&\Longrightarrow & yz=-\frac{px_0^{p-2}}{a}s-\frac{p(p-3)x_0^{p-3}}{2a}s^2+O(|s|^3) \\
&\Longrightarrow & s=-\frac{a}{px_0^{p-2}}yz
-\frac{(p-3)a^2}{2p^2x_0^{2p-3}}(yz)^2+O\big(|yz|^3\big), 
\end{eqnarray*}
and hence, 
\begin{eqnarray*}
|x|^2&=&(s+x_0) \overline{(s+x_0)} \\
&= & 
\big(x_0-\frac{a}{px_0^{p-2}}yz-\cdots \big)
\big(\overline{x}_0-\frac{a}{p\overline{x}_0^{p-2}}\overline{yz}-\cdots \big)\\
&= & |x_0|^2-2\mathrm{Re}\big(\frac{a\overline{x}_0}{px_0^{p-2}}yz\big)+\frac{a^2}{p^2|x_0|^{2(p-2)}}|yz|^2+\cdots .
\end{eqnarray*}
Taking local coordinates $v$ and $w$ on $X_1$ as the restrictions of $y$ and 
$\displaystyle\frac{\overline{x_0}|x_0|^{p-3}}{x_0^{p-2}}z$, 
respectively, 
we can express the map $g|_{X_1}$ as 
$$(v,w) \mapsto 
a^{-\frac{2}{p}}-\frac{1}{2}(|v|^2+|w|^2)-\lambda\mathrm{Re}(vw)
+\frac{\sqrt{3}i}{2}(|v|^2-|w|^2)+\cdots ,$$
where $\displaystyle \lambda =\frac{2}{p}a^{\frac{2p-3}{p}}>1$. 
Hence, in the real coordinates $(v_1, v_2, w_1, w_2)$, where $v=v_1+iv_2$ and $w=w_1+iw_2$,   
the Hessians $A$ and $B$ of the real and imaginary parts of $g|_{X_1}$ are    
$$A
=\begin{pmatrix} 
-1&0&-\lambda &0 \\ 
0&-1&0&\lambda \\
-\lambda &0&-1&0\\
0&\lambda &0&-1
\end{pmatrix}, \; 
B
=\sqrt{3}\begin{pmatrix} 
1&0&0&0\\
0&1&0&0\\
0&0&-1&0\\
0&0&0&-1
\end{pmatrix},$$
respectively. Then, taking the conjugates of them by the orthogonal matrix 
$\displaystyle P=\frac{1}{\sqrt{2}}\begin{pmatrix} 1&1&0&0 \\ 0&0&1&1 \\ -1&1&0&0 \\ 0&0&1&-1 \end{pmatrix}$, we have 
$$\T{P}AP
=\begin{pmatrix} \lambda -1 &0&0&0 \\ 0&-\lambda-1&0&0 \\ 0&0&\lambda -1&0 \\ 0&0&0&-\lambda-1 \end{pmatrix}, 
\; \; 
\T{P}BP
=\sqrt{3}\begin{pmatrix} 0&1&0&0 \\ 1&0&0&0 \\ 0&0&0&1 \\ 0&0&1&0 \end{pmatrix}.$$
Therefore, the $2$-jet of the singularity $(x_0, 0, 0)$ of $g|_{X_1}$ is $\mathcal{A}$-equivalent to 
the Lefschetz singularity, which is given by 
$$v^2+w^2=(v_1+iv_2)^2+(w_1+iw_2)^2=(v_1^2-v_2^2+w_1^2-w_2^2)+2i(v_1v_2+w_1w_2).$$
\end{proof}

Thus, we have obtained $X_1$ Liouville homotopic to 
the original Milnor fiber, together with a Lagrangian torus fibration $g|_{X_1}$ on it. 
However, there remains one issue to address. 
The boundary $\partial X_1$ is not fibered by regular fibers of $g|_{X_1}$, 
so we need to take a Liouville domain $Y$ in $X_1$ 
such that $Y$ is Liouville homotopic to $X_1$ 
and the boundary $\partial Y$ is fibered by Lagrangian tori. 
This is accomplished by the following lemma, 
which provides a smooth homotopy of Liouville domains $\{X_t\}_{0\leq t\leq 2}$ such that $X_0=X$ and $X_2=Y$. 
By reparametrizing the homotopy so that $X_0=X$ and $X_1=Y$, we complete the proof of Theorem~\ref{precise statement}. 

\begin{lemma}
Suppose that $a>\max \left\{3^M, m^2(m+3) \right\}$, where $m=30M$. 
Then $$Y=X_1\cap g^{-1}\Big(D^2_{\frac{1}{3}}\Big)$$ is a Liouville domain that is Liouville homotopic to $X_1$. 
Moreover, the boundary $\partial Y$ is fibered by regular fibers of $g|_{X_1}$. 
\end{lemma}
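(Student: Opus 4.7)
The plan is to verify the three claims of the lemma in turn: (i) $\partial Y$ is fibered by regular tori, (ii) $\partial Y$ coincides with $X_1\cap g^{-1}(\partial D^2_{1/3})$, and (iii) $Y$ carries a Liouville structure Liouville-homotopic to that of $X_1$. For (i), by Theorem~\ref{thm: main 2} the critical values of $g|_{X_1}$ are $a^{-2/p}$, $e^{2\pi i/3}a^{-2/q}$, $e^{4\pi i/3}a^{-2/r}$, of maximum modulus $a^{-2/M}$. The hypothesis $a>3^M$ gives $a^{-2/M}<1/9<1/3$, so every critical value lies strictly inside $D^2_{1/3}$; consequently $X_1\cap g^{-1}(\partial D^2_{1/3})$ is a disjoint union of regular Lagrangian fibers of $g|_{X_1}$.

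For (ii), I use the identity
\[
|g(x,y,z)|^2=\tfrac12\bigl[(|x|^2-|y|^2)^2+(|y|^2-|z|^2)^2+(|z|^2-|x|^2)^2\bigr].
\]
On the unit sphere, $|g|\le 1/3$ confines $(|x|^2,|y|^2,|z|^2)$ to a neighborhood of $(\tfrac13,\tfrac13,\tfrac13)$ of Euclidean radius at most $\sqrt{2/27}$. A direct estimate then shows that each of the three ratios $\sqrt{|y|^2+|z|^2}/|x|$, $\sqrt{|z|^2+|x|^2}/|y|$, $\sqrt{|x|^2+|y|^2}/|z|$ exceeds $1/2$ throughout this region, so all the cutoffs $\varphi_j$ vanish and $f_1=axyz$ there. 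Since $|x|$, $|y|$, $|z|$ are uniformly bounded below by a positive constant here, $|f_1|=a|xyz|$ is of order $a$, contradicting $|f_1|=1/a$ under the lemma's hypothesis. Hence $\partial X_1\cap g^{-1}(D^2_{1/3})=\emptyset$, and $\partial Y$ is as claimed.

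For (iii), I use the Liouville vector field $Z_1=\nabla^{X_1}(r^2/2)$ on $X_1$ from the proof of Theorem~\ref{thm: main}. A computation on $\C^3$ yields $(E_x+E_y+E_z)(|g|^2)=4|g|^2$, so the ambient dilation $Z_{\C^3}=\tfrac12(E_x+E_y+E_z)$ satisfies $Z_{\C^3}(|g|^2)=2|g|^2=2/9$ on $\partial Y$. Writing $Z_{\C^3}=Z_1+N$ with $N$ symplectically normal to $X_1$, the equality $Z_1(|g|^2)=2/9-N(|g|^2)$ reduces matters to bounding the correction $N(|g|^2)$. On the bulk of $\partial Y$ (where all $\varphi_j$ vanish and $f_1=axyz$ is holomorphic), $X_1$ is a complex hypersurface whose symplectic normal is spanned by $\nabla f_1$, and a direct calculation shows $|N(|g|^2)|=O(1/a)$, whence $Z_1(|g|^2)>0$ on $\partial Y$. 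Therefore $(Y,\lambda_0|_Y)$ is a Liouville domain, and contracting $X_1$ along $-Z_1$—which is possible since $X_1\setminus\mathrm{int}(Y)$ contains no critical points of $g|_{X_1}$—yields a smooth family of Liouville domains interpolating between $X_1$ and $Y$.

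The principal obstacle is the positivity estimate $Z_1(|g|^2)>0$ on $\partial Y$: the $X_1$-gradient of $r^2$ is not manifestly aligned with that of $|g|^2$, and the symplectic-normal correction $N(|g|^2)$ must also be controlled in the transition regions where the cutoffs $\varphi_j$ vary (there $TX_1$ is not a complex subspace, so a perturbation argument from the holomorphic regime is needed). Should this direct calculation prove too delicate, an alternative is to modify $\lambda_0|_{X_1}$ by an exact $1$-form compactly supported near $\partial Y$, producing a Liouville-homotopic primitive whose Liouville vector field is manifestly outward-transverse at $\partial Y$.
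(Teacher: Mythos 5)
Your overall strategy is the same as the paper's: compare the ambient radial field applied to $g$ (the identity $dg(E_x+E_y+E_z)=2g$, equivalently your $Z_{\C^3}(|g|^2)=2|g|^2$) with the symplectically normal correction, and deduce that the induced Liouville field on $X_1$ is outward transverse to $\partial Y$. Your parts (i) and (ii) are fine -- (ii) is even a clean alternative to the paper's localization of $g(\partial X_1)$ near the edges of the triangle $\Delta(1)$. But the core of the lemma is exactly the estimate you defer: bounding the normal correction along \emph{all} of $\partial Y$, including where the cutoffs $\vphi_j$ are active. This region is not empty: for instance, points with $\arg$-adjusted $x$ satisfying $|x|^2\approx\frac13$ and $|y|,|z|=O(a^{-1/2})$ chosen so that $\vphi_1x^p+axyz=\frac1a e^{i\theta}$ lie on $X_1$ with $|g|=\frac13$, and there $\vphi_1=1$ (and nearby, $\vphi_1'\neq0$). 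So the holomorphic-regime computation $|N(|g|^2)|=O(1/a)$ does not cover $\partial Y$, and the "perturbation argument from the holomorphic regime" you invoke is precisely what needs to be proved. The paper does this by exhibiting the basis $\mathrm{Re}\,w$, $\mathrm{Im}\,w$ of $(TX_1)^{\perp}$ built from $\nabla h$ and $\overline{\nabla}h$, computing $dh(\nabla\rho^2)=(p-3)\vphi_1x^p+(q-3)\vphi_2y^q+(r-3)\vphi_3z^r+\frac3ae^{i\theta}$ (so $|dh(v')|<M$), and then chaining this with the gradient gap $\|\nabla h\|-\|\overline{\nabla}h\|>\frac a6(\max\{|x|,|y|,|z|\})^2-(M+6)\max\{|x|,|y|,|z|\}$ from the proof of Theorem~\ref{thm: main} and the lower bound $\max\{|x|,|y|,|z|\}>\frac12$ forced by $|g|=\frac13$, to get $|dg(v')|<\frac{48M}{a-24M-144}<\frac23$; this is the only place where the hypothesis $a>m^2(m+3)$ is used, and your write-up never engages it.

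Two smaller points. First, be consistent about which projection you use: the Liouville field of $\lambda_0|_{X_1}$ is the component of the ambient Liouville field tangent to $X_1$ in the \emph{symplectically} orthogonal splitting, not the metric gradient $\nabla^{X_1}(r^2/2)$; your argument mixes the two. Second, the concluding homotopy needs more than "no critical points of $g|_{X_1}$ in $X_1\setminus\mathrm{int}(Y)$": you need the Liouville field transverse to an interpolating family of hypersurfaces between $\partial Y$ and $\partial X_1$ (the paper gets this from the symplectization structure of $X_1\setminus\mathrm{Int}\,Z$ with $Z=X_1\cap D^6_{1/2}$, together with transversality to $\partial Y$). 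The fallback of modifying $\lambda_0|_{X_1}$ by a compactly supported exact term near $\partial Y$ also does not come for free -- without an estimate of the same kind you cannot guarantee the modified field stays Liouville and outward transverse -- so as written the proposal has a genuine gap at the decisive step.
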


\begin{proof}
By the condition $a>3^M$, all the critical values of $g|_{X_1}$ 
are inside the disk $\displaystyle D^2_{\frac{1}{9}}=\left\{w\in \C \; \middle| \; |w|\leq \frac{1}{9} \right\}$. 
Indeed, $a^{-\frac{2}{p}}$, $a^{-\frac{2}{q}}$, $a^{-\frac{2}{r}}$ 
are all equal or less than $a^{-\frac{2}{M}}$ which is less than $\frac{1}{9}$. 
Then the last claim is obvious by construction. \\
Now we put $\displaystyle Z=X_1\cap D^6_{\frac{1}{2}}$. 
Then $Z$ contains all the singular fibers of $g|_{X_1}$ by the following argument. 
Let $\Delta (r)$ be the triangle in $\C$ whose vertices are $r$, 
$re^{\frac{2\pi i}{3}}$ and $re^{\frac{4\pi i}{3}}$. 
If $(x,y,z)\in \partial Z$, 
then $g(x,y,z)$ is contained in the neighborhood of width $\frac{1}{4050}$ 
of the boundary edges of the triangle $\Delta(\frac{1}{4})$. 
Indeed, we have 
\begin{eqnarray*}
\min \left\{ |x|^3, |y|^3, |z|^3 \right\} \, \leq \, |xyz|
&=&\frac{1}{a}\Big|-x^p-y^q-z^r+\frac{1}{a} e^{i\theta} \Big|\\
&<&\frac{1}{a}  <  \frac{1}{m^2(m+3)}  <  (\frac{1}{90})^3,
\end{eqnarray*}
and $g(x,y,z)\in \partial \Delta (\frac{1}{4})$ if and only if $xyz=0$. 
Similarly, if $(x,y,z)\in \partial X_1$, 
then $g(x,y,z)$ is in a small neighborhood of the boundary edges of the triangle $\Delta (1)$. 
Moreover, the image of $X_1-\mathrm{Int} Z$ by $g$ is disjoint from the disk $\displaystyle D^2_{\frac{1}{9}}$. 
Hence, $Z$ contains all the singular fibers. \\
Then $X_1- \mathrm{Int}Z$ is the symplectization of the convex hypersurface $\partial Z$ 
with the Liouville vector field $$v=\nabla (\rho^2|_{X_1-\mathrm{Int} Z}),$$
where $\rho=\sqrt{|x|^2+|y|^2+|z|^2}$. By a similar argument above, 
we can easily see that $g(Z)$ is contained in the disk $\displaystyle D^2_{\frac{1}{3}}$, and $g(\partial X_1)$ is outside the disk. 
This implies that $\partial Y\subset \mathrm{Int} X_1-Z$. 
Moreover, the Liouville vector field $v$ is transversal to $\partial Y$. This is proved as follows.  \\
Since $X_1=h^{-1}(\frac{1}{a}e^{i\theta})$ is a sympletic submanifold of  $\C^3$, 
the gradient vector field $\nabla \rho^2$ splits into the sum $v+v'$, 
where $v$ is tangent to $X_1$ and $v'$ is symplectically orthonormal to $X_1$. 
We note that $\nabla \rho^2=E_x+E_y+E_z$, and so, 
$$dg(\nabla \rho^2)=dg(E_x+E_y+E_z)=2(|x|^2+e^{\frac{2\pi i}{3}} |y|^2+e^{\frac{4\pi i}{3}}|z|^2)=2g. $$
Hence, it is enough to show that the inequality $|dg(v')|<\frac{2}{3}$ holds along $\partial Y$. 
In order to do so, we first describe a basis of the symplectic orthonormal vector bundle $(TX_1)^{\perp}$. 
Since 
$$dh=(\frac{\partial h}{\partial x}dx+\frac{\partial h}{\partial y}dy+\frac{\partial h}{\partial z}dz)+(\frac{\partial h}{\partial \bar{x}}d\bar{x}+\frac{\partial h}{\partial \bar{y}}d\bar{y}+\frac{\partial h}{\partial \bar{z}}d\bar{z}), $$
a tangent vector to $X_1$ can be written as 
$c_1\frac{\partial }{\partial x}+c_2\frac{\partial }{\partial y}+c_3\frac{\partial }{\partial z}+
\bar{c}_1\frac{\partial }{\partial \bar{x}}+\bar{c}_2\frac{\partial }{\partial \bar{y}}+\bar{c}_3\frac{\partial }{\partial \bar{z}}$, 
where the complex numbers $c_1, c_2, c_3$ satisfy 
\begin{eqnarray*} 
c_1\frac{\partial h}{\partial x}+c_2\frac{\partial h}{\partial y}+c_3\frac{\partial h}{\partial z}+\bar{c_1}\frac{\partial h}{\partial \bar{x}}+\bar{c_2}\frac{\partial h}{\partial \bar{y}}+\bar{c_3}\frac{\partial h}{\partial \bar{z}}=0 . 
\end{eqnarray*}
Taking the interior product with the standard symplectic structure $\omega _0$, we obtain the real $1$-form 
$$\frac{i}{2}(c_1d\bar{x}+c_2d\bar{y}+c_3d\bar{z}-\bar{c}_1dx-\bar{c}_2dy-\bar{c}_3dz).$$
Then the two vector fields $\mathrm{Re} w=\frac{1}{2}(w+\bar{w})$ and 
$\mathrm{Im} w=\frac{1}{2i}(w-\bar{w})$ form a basis of the bundle $(TX_1)^{\perp}$, 
where the vector $w$ is given by 
$$w=-\frac{\partial h}{\partial \bar{x}}\frac{\partial }{\partial x}
-\frac{\partial h}{\partial \bar{y}}\frac{\partial }{\partial y}
-\frac{\partial h}{\partial \bar{z}}\frac{\partial }{\partial z}
+\frac{\partial h}{\partial x}\frac{\partial }{\partial \bar{x}}
+\frac{\partial h}{\partial y}\frac{\partial }{\partial \bar{y}}
+\frac{\partial h}{\partial z}\frac{\partial }{\partial \bar{z}}$$
and $\bar{w}$ is its complex conjugate. 
Since $dh(w)=0$ and $dh(\bar{w})=\| \nabla h \|^2- \|\overline{\nabla} h \|^2$, 
we obtain 
$$dh(\mathrm{Re} w)=\frac{1}{2}(\| \nabla h \|^2- \|\overline{\nabla} h \|^2), \; 
dh(\mathrm{Im} w)=\frac{i}{2}(\| \nabla h \|^2- \|\overline{\nabla} h \|^2).$$
On the other hand, $dh(v')=dh(\nabla \rho^2)$ is calculated as 
\begin{eqnarray*}
dh(\nabla \rho^2)
=dh(E_x+E_y+E_z)=p\vphi_1x^p+q\vphi_2y^q+\vphi_3z^r+3axyz\\
=(p-3)\vphi_1x^p+(q-3)\vphi_2y^q+(r-3)\vphi_3z^r+\frac{3}{a}e^{i\theta}.
\end{eqnarray*}
Hence, it follows that $|dh(v')|<M$. Therefore, $|dg(v')|$ is estimated as follows:
\begin{eqnarray*}
|dg(v')|&< &\frac{2M}{\| \nabla h \|^2- \|\overline{\nabla} h \|^2}\max \left\{ |dg(w)|, |dg(\bar{w})| \right\}\\
&\leq & \frac{2M}{\| \nabla h \|^2- \|\overline{\nabla} h \|^2}\sqrt{|x|^2+|y|^2+|z|^2}(\| \nabla h \|+ \|\overline{\nabla} h \|)
\\
&\leq &\frac{2M}{\| \nabla h \|- \|\overline{\nabla} h \|}.
\end{eqnarray*}
As in the proof of Theorem~\ref{thm: main}, we obtain 
$$\| \nabla h \|- \|\overline{\nabla} h \|>\frac{a}{6}(\max\left\{ |x|, |y|, |z|\right\})^2-(M+6)\max\left\{ |x|, |y|, |z|\right\}.$$ 
By the condition $g(x,y,z)=|x|^2+e^{\frac{2\pi i}{3}} |y|^2+e^{\frac{4\pi i}{3}} |z|^2=\frac{1}{3}$, 
we have $$\max\left\{ |x|, |y|, |z|\right\}>\frac{1}{2}$$ on $\partial Y$. 
Then it follows that $$|dg(v')|<\frac{48M}{a-24M-144},$$ 
which is smaller than $\frac{2}{3}$ since $a$ is greater than $m^2(m+3)=2700M^2(10M+1)$. 
Therefore, the radial component of the vector field $dg(v)$ is positive along $\partial Y$, 
and hence, the Liouville vector field $v$ is transversal to $\partial Y$. 
This implies that $\partial Y$ is convex, and $X_1$, $Y$ and $Z$ can be connected by a homotopy of Liouville manifolds. 
\end{proof}

Thus we have a deformed Milnor fiber $Y=h^{-1}(\frac{1}{a}e^{i\theta })\cap g^{-1}(D^2_{\frac{1}{3}})\cap D^6$ 
together with a genus-one Lefschetz fibration $g|_Y$ that fibers the convex boundary $\partial Y$ by regular tori. 
Now we rewrite $Y$ by $Y_{\theta }$ in order to make it clear 
that we have a smooth family of deformed Milnor fibers parametrized by $\theta \in S^1$. 
Since $g|_{Y_{\theta }}\colon Y_{\theta }\to D^2_{\frac{1}{3}}$ is the genus-one Lefschetz fibration obtained in Theorem~\ref{precise statement}, we have an $S^1$-parametric Lefschetz fibration 
$$(g, h)\colon \bigcup _{\theta \in S^1} Y_{\theta }\to D^2_{\frac{1}{3}}\times S^1_{\frac{1}{a}},$$
where $h \colon \bigcup _{\theta \in S^1} Y_{\theta }\to S^1_{\frac{1}{a}}$ 
is isomorphic to the Milnor fibration $\arg f \colon S^5\setminus L\to S^1$as a fiber bundle over the circle.

\section{\large Milnor lattice and monodromy of the singularities}
\label{Section:MilnorLattice}
Let $X_{p,q,r}$ denote the Milnor fiber of a $T_{p,q,r}$ singularity. 
In the precedent section, we have deformed  $X_{p,q,r}$ to the 
total space $Y\subset X_1$ 
of the Lagrangian Lefschetz 
fibration $\displaystyle g|_{Y}: Y\to D^2_{\frac{1}{3}}$ 
by a convex Liouville homotopy.  
Thus, we can say that $X_{p,q,r}$ itself 
carries a Lefschetz fibration to the disk $D^2$. 
In this section, we construct a system of embedded surfaces 
representing a generator of $H_2(X_{p,q,r}; \Z)$ 
in the guide of the fibration $g|_{Y}$. 
Then we observe that the intersection matrix in this system  
coincides with the famous one in algebraic geometry. 
Consequently, we will see that the surface system in the fibration  
is a geometric realization of the Milnor lattice. 
We also describe the monodromy of the Milnor fibration. 

\subsection{A surface system realizing the Milnor lattice}\label{Subsec:Intersection}
We fix the parameter $\theta=0$. Then we see from 
Lemma~\ref{Hessian} and its proof that the singular fiber 
\[
\Sigma_1:=(g|_{Y})^{-1}(|a|^{-\frac{2}{p}})=\left\{|y|=|z|,
\,\, |x|^2-|y|^2=a^{-\frac{2}{p}}\right\}\cap Y
\] 
is the union of the smooth $2$-spheres 
\[
\Sigma_{1,j}:=\left\{\arg x\in \left[\frac{2\pi (j-1)}{p}, \frac{2\pi j}{p}\right]/2\pi \Z
\subset \R/2\pi\Z \right\}\cap \Sigma_1 
\]
for $j=0,\dots p-1$, and the other singular fibers are the similar unions
\[
\Sigma_2:=(g|_{Y})^{-1}(|a|^{-\frac{2}{q}}e^{\frac{2\pi i}{3}}
)=\bigcup_{k=0}^{q-1}\Sigma_{2,k},\quad
\Sigma_3:=(g|_{Y})^{-1}(|a|^{-\frac{2}{r}}e^{\frac{4\pi i}{3}})=\bigcup_{l=0}^{r-1}\Sigma_{3,l}. 
\]
These spheres are oriented after the regular fiber 
\[
T^2:=(g|_Y)^{-1}(0)=\left\{|x|=|y|=|z|=a^{-\frac{2}{3}},
\,\, \arg x+\arg y+\arg z=0 \right\}. 
\] 
Note that the symplectic structure of $X_{p,q,r}$ 
and the orientation of the base space $D^2$ determines the 
orientation of the fiber $T^2$ with respect to which the area form
\[
(-d\arg x \wedge d\arg y)|_{T^2}=(-d\arg y\wedge d\arg z)|_{T^2}=(-d\arg z\wedge d\arg x)|_{T^2} 
\] 
is positive. Then we have 
\begin{eqnarray*}
H_2(X_{p,q,r};\Z)=H_2(Y;\Z)\ni\, [T^2]&=&[\Sigma_{1,0}]+\cdots+[\Sigma_{1,p-1}]\\
&=&[\Sigma_{2,0}]+\cdots+[\Sigma_{2,q-1}]\\
&=&[\Sigma_{3,0}]+\cdots+[\Sigma_{3,r-1}],
\end{eqnarray*}
\begin{eqnarray*}
&&[\Sigma_{1,j}]\cdot[\Sigma_{2,k}]=[\Sigma_{2,k}]\cdot[\Sigma_{3,l}]
=[\Sigma_{3,l}]\cdot[\Sigma_{1,k}]=0,\\
&&[\Sigma_{1,j}]\cdot[\Sigma_{1,{j+1}}]=[\Sigma_{2,k}]\cdot[\Sigma_{2,k+1}]
=[\Sigma_{3,l}]\cdot[\Sigma_{3,l+1}]=1
\end{eqnarray*}
for any $j\in \Z_p$, $k\in \Z_q$, $l\in \Z_r$ provided that $p,q,r>2$. 
In the case where $p=2$, we have 
$[\Sigma_{1,0}]\cdot[\Sigma_{1,1}]=2$ since 
the intersection $\Sigma_{1,0}\cap \Sigma_{1,1}$ consists of two points. 
In any case, the displaceability $[T^2]\cdot[T^2]=0$ of the regular 
fiber implies 
\[
[\Sigma_{1,j}]\cdot[\Sigma_{1,j}]=[\Sigma_{2,k}]\cdot[\Sigma_{2,k}]
=[\Sigma_{3,l}]\cdot[\Sigma_{3,l}]=-2 \,\, 
(j\in \Z_p,\, k\in \Z_q,\, l\in \Z_r).\]
We take three oriented disks
\begin{align*}
&D_1:=\left\{ |y|=|z|, \,\, 0\leq |x|^2-|y|^2\leq a^{-\frac{2}{p}},\,\,
\arg x=0 \right\}\cap Y,\\
&D_2:=\left\{ |z|=|x|, \,\, 0\leq |y|^2-|z|^2\leq a^{-\frac{2}{q}},\,\, 
\arg y=0 \right\}\cap Y,\\
&D_3:=\left\{ |x|=|y|, \,\, 0\leq |z|^2-|x|^2\leq a^{-\frac{2}{r}},\,\, 
\arg z=0 \right\}\cap Y
\end{align*}
with polar coordinates 
\begin{align*}
&\rho_1:=\sqrt{1-a^{\frac{2}{p}}(|x|^2-|y|^2)}|_{D_1},
&&\psi_1:=(\arg(y)-\arg(z))|_{D_1},\\
&\rho_2:=\sqrt{1-a^{\frac{2}{q}}(|y|^2-|z|^2)}|_{D_2},
&&\psi_2:=(\arg(z)-\arg(x))|_{D_2},\\
&\rho_3:=\sqrt{1-a^{\frac{2}{r}}(|z|^2-|x|^2)}|_{D_3},
&&\psi_3:=(\arg(x)-\arg(y))|_{D_3}. 
\end{align*}
For each $m\in\{1,2,3\}$, 
the disk $D_m$ intersects 
with the sphere $\Sigma_{m,1}$ positively at the center of $D_m$. It also intersects 
with the sphere $\Sigma_{m,0}$ negatively at the same point. 
The image $g(D_m)$ is the line segment joining 
the singular value $g(\Sigma_m)$ to the origin. 
Over the origin, the boundary curves $\partial D_m$ ($m=1,2,3$) 
meet at a single point satisfying $\arg x=\arg y=\arg z=0$. 
Thus, the relative complement $T^2\setminus \bigcup_{m=1}^3\partial D_m$ 
is the union of the positive triangular 
region $T_+$ with the center satisfying 
$\arg x|_{T^2}=\arg y|_{T^2}=\arg z|_{T^2}=\frac{4\pi}{3}$ and 
the negative triangular region $T_-$ 
with the center satisfying 
$\arg x|_{T^2}=\arg y|_{T^2}=\arg z|_{T^2}=\frac{2\pi}{3}$. 
Here we orient $T_+$ positively and $T_-$ negatively 
with respect to the orientation of $T^2$ so that 
they are positive as well as $D_m$ ($m=1,2,3$) in each of 
the piecewise smooth spheres $\Sigma_\pm:=D_1\cup D_2\cup D_3\cup T_\pm$. 
Then we have   
\[
[\Sigma_+]-[\Sigma_-]=[T^2],\quad 
[\Sigma_\pm]\cdot[\Sigma_{m,1}]=1,\quad [\Sigma_\pm]\cdot [\Sigma_{m,0}]=-1\quad (m=1,2,3).
\] 
Now we obtain the system $\mathcal{S}$ of $p+q+r-1$ spheres 
\[
\Sigma_{1,j}\,\,(j=1,\dots, p-1),\,\, 
\Sigma_{2,k}\,\,(k=1,\dots, q-1),\,\,
\Sigma_{3,l}\,\,(l=1,\dots, r-1),\,\,
\Sigma_\pm 
\]
which represents a generator of $H_2(Y;\Z)=H_2(X_{p,q,r};\Z)$ as is shown in the next proposition. 
Note that, then, the system $\mathcal{S}'$ of $p+q+r-1$ surfaces 
\[
\Sigma_{1,j}\,\,(j=1,\dots, p-1),\,\, 
\Sigma_{2,k}\,\,(k=1,\dots, q-1),\,\,
\Sigma_{3,l}\,\,(l=1,\dots, r-1),\,\,
\Sigma_+, T^2
\]
represents another generator of $H_2(X_{p,q,r};\Z)$ (see Fig.~\ref{jyuzu2}). 
\begin{figure}[h]
\centering
\includegraphics[width=120mm]{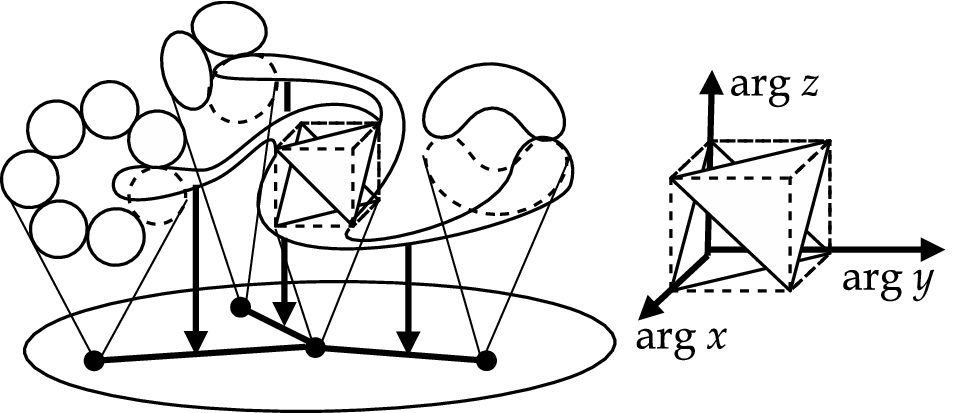}
\caption{The left-hand shows the $p+q+r-1$ spheres. The singular fibers are the unions of $p(=2)$, $q(=3)$, and $r(=7)$ spheres which look like rosaries. The dotted spheres $\Sigma_{m,0}$ ($m=1,2,3$) are removed.  Instead, the three disks $D_m$ over the bolded segments tie the broken rosaries to the regular fiber $T^2$ at the triangular region $T_+$ (or $T_-$). The right-hand shows how to cut $T_\pm$ out of $T^2$. Here the front triangle is the positive region $T_+$ which are seen from the back.}
\label{jyuzu2}
\end{figure}

\begin{proposition}
The homology classes of the spheres in $\mathcal{S}$ generates $H_2(X_{p,q,r};\Z)$.
\end{proposition}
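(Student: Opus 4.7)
My plan is to match the cardinality $|\mathcal{S}|=p+q+r-1$ with the rank of $H_2(X_{p,q,r};\Z)$, then to read off a natural $\Z$-basis of $H_2$ from the handle decomposition of $Y$ induced by the Lefschetz fibration $g|_Y\colon Y\to D^2$ of Theorem~\ref{thm: main 2}, and finally to identify this basis with $\mathcal{S}$ via an elementary unimodular change of generators.

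For the rank: the Euler characteristic formula applied to the Lefschetz fibration with regular fiber $T^2$ and $p+q+r$ Lefschetz critical points gives $\chi(X_{p,q,r})=\chi(T^2)\chi(D^2)+(p+q+r)=p+q+r$. Since the Milnor fiber of an isolated hypersurface singularity in $\C^3$ is simply connected and has the homotopy type of a bouquet of $\mu$ two-spheres, $\mathrm{rank}\,H_2(X_{p,q,r};\Z)=\chi-1=p+q+r-1=|\mathcal{S}|$. It therefore suffices to exhibit $\mathcal{S}$ as a $\Z$-basis.

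For the natural basis: the handle decomposition $Y\cong(T^2\times D^2)\cup(p+q+r\text{ two-handles})$ attaches one handle per vanishing cycle. In the cellular chain complex, the map $\partial_2\colon\Z[T^2]\oplus\Z^{p+q+r}\to H_1(T^2)$ sends the torus cell to $0$ and each handle core to the class of its vanishing cycle. The three distinct vanishing cycle classes $[\alpha],[\beta],[\gamma]\in H_1(T^2)$ satisfy the relation $[\alpha]+[\beta]+[\gamma]=0$ (and no other independent relation), forced by the fact that the three boundary circles $\partial D_m$ ($m=1,2,3$) co-bound the triangular 2-cell $T_+\subset T^2$. Hence a $\Z$-basis of $\ker\partial_2=H_2(Y)$ consists of $[T^2]$, the $p+q+r-3$ consecutive-thimble differences realized as the matching spheres $\Sigma_{m,j}$ ($1\le j\le j_m-1$, with $j_m\in\{p,q,r\}$), and one external primitive class $\kappa$ obtained by summing one thimble from each singular fiber with the unit coefficients dictated by the relation above. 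I then identify $\kappa=[\Sigma_+]$: by construction $\Sigma_+=D_1\cup D_2\cup D_3\cup T_+$, where each $D_m$ is precisely the Lefschetz thimble of a single critical point in $\Sigma_m$ (with boundary the corresponding vanishing cycle) and $T_+$ is the triangular 2-cell realizing $[\alpha]+[\beta]+[\gamma]=0$. Since the equality $[T^2]=[\Sigma_+]-[\Sigma_-]$ is already established, replacing $[T^2]$ by $[\Sigma_-]$ in the natural basis is an elementary unimodular operation, so $\mathcal{S}$ itself is a $\Z$-basis of $H_2(X_{p,q,r};\Z)$.

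\textbf{The main obstacle} is the geometric identification of $\Sigma_+$ with the external primitive class $\kappa$, which rests on two points: the relation $[\alpha]+[\beta]+[\gamma]=0$ in $H_1(T^2)$, which follows from $\partial T_+=\partial D_1+\partial D_2+\partial D_3$ on the regular fiber; and the primitivity of $[\Sigma_+]$, guaranteed either by its unit-coefficient chain representation $D_1+D_2+D_3+T_+$ or by the intersection identity $[\Sigma_+]\cdot[\Sigma_{m,1}]=1$ of \S\ref{Subsec:Intersection}, since any nontrivial divisor of $[\Sigma_+]$ would have to divide this pairing.
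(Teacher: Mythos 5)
Your argument is correct, but it is organized differently from the paper's. The paper perturbs $g|_Y$ to a generic Lefschetz fibration and runs an inductive Mayer--Vietoris computation: it starts from the preimage of a region containing the two spheres $\Sigma_\pm$, which is simply connected with $H_2\cong\Z\oplus\Z$ generated by $[\Sigma_+]$ and $[\Sigma_-]$, and then enlarges the region one critical value at a time, noting that homologically each Lefschetz point is the attachment of a $2$-cell along its vanishing cycle and that each step contributes exactly one new sphere $\Sigma_{*,*}$. You instead do the computation globally: you take the handle decomposition $(T^2\times D^2)\cup(p+q+r\ \text{two-handles})$ all at once, compute $H_2(Y)$ as the kernel of the boundary map to $H_1(T^2)$ using the single relation $[\alpha]+[\beta]+[\gamma]=0$ (justified geometrically by $\partial T_+$), and then pass from the resulting algebraic basis (fiber class, consecutive thimble differences, one ``diagonal'' sum of thimbles) to $\mathcal{S}$ by a unimodular change; the Euler-characteristic/bouquet count of the rank is a pleasant consistency check but not logically needed, and it is the one ingredient external to the fibration (Milnor's connectivity theorem), which the paper's induction avoids. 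Both proofs rest on the same key fact that a Lefschetz singularity is homologically a $2$-cell attached along the vanishing cycle, and both leave the same kind of detail to the reader, namely the geometric identification of $\Sigma_{m,j}$ with thimble differences and of $\Sigma_+$ with thimbles capped in the fiber; your route has the merit of exhibiting $\mathcal{S}$ explicitly as a $\Z$-basis with the full change-of-basis matrix visible, while the paper's route tracks the geometric spheres directly at each inductive stage. One small caution: the exact identity $\kappa=[\Sigma_+]$ is more than you need and slightly delicate (the cap $T_+$ contributes a fiber-supported chain, so a priori $[\Sigma_+]$ agrees with $\kappa$ only modulo $[T^2]$ and the matching-sphere classes), but since those classes already lie in your spanning set the unimodular-change argument goes through unchanged; by contrast, the fallback you offer via $[\Sigma_+]\cdot[\Sigma_{m,1}]=1$ establishes only primitivity, which on its own would not suffice to conclude that $\mathcal{S}$ generates, so the chain-level identification (suitably weakened as above) is the argument to keep.
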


\begin{proof}
To calculate the homology group, we deform the fibration $g|_Y$ 
to the generic one depicted in the right-hand of Fig.~\ref{Y12}. 

\begin{figure}[h]
\includegraphics[width=65mm]{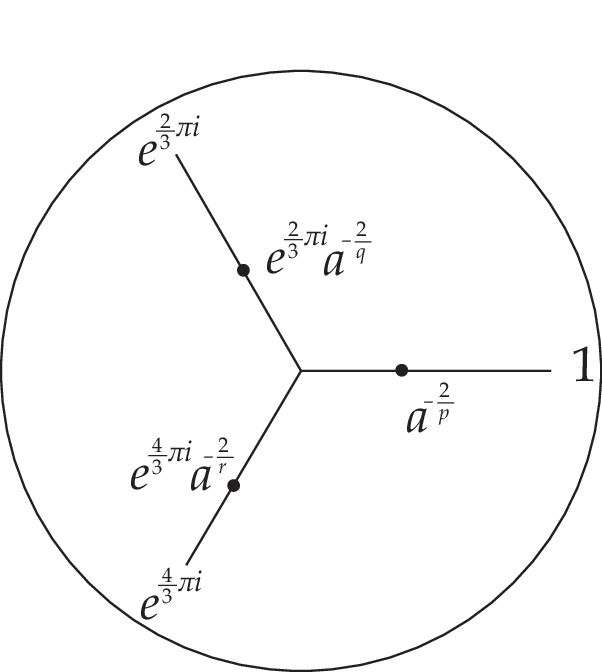}
\quad$\to$\quad
\includegraphics[width=65mm]{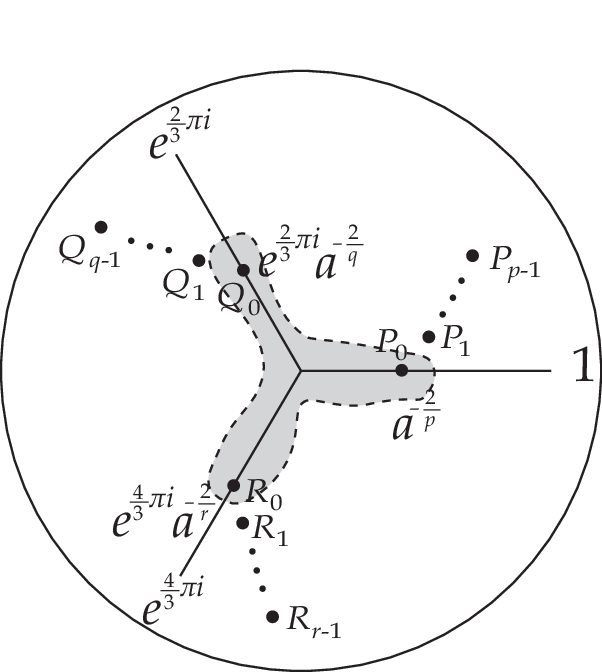}
\caption{The critical values of $g|_Y$ in the left-hand go into pieces in the right-hand. 
We suppose that the critical points over $P_j$, $Q_k$, $R_l$ satisfy 
$\arg x=\frac{2\pi j}{p}$, $\arg y=\frac{2\pi k}{q}$, $\arg z=\frac{2\pi l}{r}$ ($j\in \Z_p$, $k\in\Z_q$, $l\in \Z_r$).}
\label{Y12}
\end{figure}

It is easy to see that the preimage of the shadowed part is simply connected and 
the second homology group is $\Z \oplus \Z$ generated by $[\Sigma_+]$ and $[\Sigma_-]$. 
Then, using the Mayer-Vietoris exact sequence, 
we can extend the shadowed part so that it includes 
one more critical value, and successively we can calculate 
the homology group of its preimage. Note that, in the homology calculation, 
adding a Lefschetz singularity is indeed equivalent to attaching a disk along its vanishing cycle. 
This increases the number of the spheres $\Sigma_{*,*}$ by one. We leave the detail of the calculation to the readers.
\end{proof}

We have determined the 
intersection matrix with respect to the above generator 
of $H_2(X_{p,q,r};\R)$ except the four entries 
\[
[\Sigma_+]\cdot [\Sigma_+], \quad[\Sigma_+]\cdot[\Sigma_-],\quad
[\Sigma_-]\cdot [\Sigma_+], \quad[\Sigma_-]\cdot[\Sigma_-]. 
\]
From $[\Sigma_+]-[\Sigma_-]=[T^2]$ and $[T^2]\cdot [T^2]=0$, we 
see that the four entries mutually coincide. 
We will show that they are equal to $-2$. 
\begin{proposition}
$[\Sigma_+]\cdot [\Sigma_+]=-2$. 
\end{proposition}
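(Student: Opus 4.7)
The plan is to observe that $\Sigma_+$ is a (piecewise smooth) oriented Lagrangian sphere in the symplectic $4$-manifold $X_1$ and then to invoke the standard formula $[L]\cdot[L]=-\chi(L)$ valid for closed oriented Lagrangian surfaces in symplectic $4$-manifolds.

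First I would verify the Lagrangian property of each of the four pieces of $\Sigma_+=D_1\cup D_2\cup D_3\cup T_+$. The triangle $T_+$ is an open subset of the regular fiber $T^2=(g|_{X_1})^{-1}(0)$, which is Lagrangian by Theorem~\ref{thm: main 2}. For a disk, say $D_1$, I would write the standard symplectic form in action--angle coordinates as
\[
\omega_0=d(|x|^2/2)\wedge d\arg x+d(|y|^2/2)\wedge d\arg y+d(|z|^2/2)\wedge d\arg z.
\]
On $D_1$ the conditions $\arg x=0$ and $|y|=|z|$ yield $d\arg x|_{D_1}=0$ and $d(|y|^2/2)|_{D_1}=d(|z|^2/2)|_{D_1}$. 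Moreover, on $D_1$ the bumps $\vphi_2,\vphi_3$ vanish (since $|x|\ge|y|=|z|$), so the defining equation $f_1=\tfrac{1}{a}$ (for $\theta=0$) reduces to $\vphi_1|x|^p+a|x|\,yz=\tfrac{1}{a}$; since the right side and $\vphi_1|x|^p$ are real, $yz$ is real, pinning $\arg y+\arg z$ to a locally constant value on the connected set $D_1$. Consequently,
\[
\omega_0|_{D_1}=d(|y|^2/2)\wedge d(\arg y+\arg z)|_{D_1}=0,
\]
so $D_1$ is Lagrangian. The same argument gives that $D_2$ and $D_3$ are Lagrangian by cyclic symmetry, so $\Sigma_+$ is a piecewise Lagrangian surface.

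Next I would apply the Lagrangian self-intersection formula. For any closed oriented Lagrangian surface $L$ in a symplectic $4$-manifold, $\omega$ induces an isomorphism $NL\cong T^*L$ whose sign relative to the complex structure on $TL$ gives $[L]\cdot[L]=e(NL)=-e(TL)=-\chi(L)$. A direct cell count for $\Sigma_+$---one vertex (the common point $P_0$ of $\partial D_1,\partial D_2,\partial D_3$ on $T^2$), three edges $\partial D_m$ $(m=1,2,3)$, and four faces ($T_+$ and the three $D_m$)---gives $\chi(\Sigma_+)=1-3+4=2$, so $\Sigma_+$ is topologically an $S^2$ and $[\Sigma_+]\cdot[\Sigma_+]=-2$.

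The main obstacle is justifying the formula $[L]\cdot[L]=-\chi(L)$ for a Lagrangian that is only piecewise smooth, namely along the $1$-skeleton $\partial D_1\cup\partial D_2\cup\partial D_3$. This can be addressed either by a Lagrangian smoothing---since each pair of adjacent Lagrangian pieces meets transversely along a common boundary arc, the Weinstein tubular neighborhood theorem allows them to be glued into a smooth Lagrangian $S^2$ in the same homology class---or by a direct calculation of the Euler number of the normal bundle of $\Sigma_+$ via clutching functions on the $1$-skeleton. Either route confirms $[\Sigma_+]\cdot[\Sigma_+]=-2$.
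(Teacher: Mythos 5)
Your verification that the four pieces are Lagrangian is correct (the action--angle computation on $D_1$, using $\arg x=0$, $|y|=|z|$ and the reality of $yz$ forced by the defining equation, is fine), the CW count $\chi(\Sigma_+)=2$ is right, and the route via the formula $[L]\cdot[L]=-\chi(L)$ for Lagrangian surfaces is genuinely different from the paper's. However, the step you yourself call the main obstacle is a genuine gap, not a routine one. First, a terminological but symptomatic point: two surfaces in a $4$-manifold cannot ``meet transversely along a common boundary arc''; the pieces meet \emph{cleanly} along the circles $\partial D_m$. A clean corner of two Lagrangian half-planes can indeed be smoothed in a Weinstein chart (inside the coisotropic slice where the extra action coordinate vanishes), but your argument says nothing about the point $P_0$ where the smoothing is actually delicate: in the coordinates $(\arg y,\arg z)$ on $T^2$ the three curves $\partial D_1,\partial D_2,\partial D_3$ are $\{\arg y+\arg z=0\}$, $\{\arg y=0\}$, $\{\arg z=0\}$, so all three pass through the single point $P_0$, and all three corners of $T_+$ sit there as well. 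Near $P_0$ the surface $\Sigma_+$ is the union of three sectors of the fiber and three half-disks, i.e.\ an isolated cone-like singular point of a piecewise Lagrangian. Replacing such an isolated singularity by an embedded Lagrangian cap is not an application of the Weinstein neighborhood theorem; local Lagrangian resolvability is governed by the Legendrian link of the singularity and its fillability, so it must be argued, not asserted. The alternative you mention is likewise only named: the isomorphism $N\Sigma_+\cong T^*\Sigma_+$ induced by $\omega_0$ is defined piece by piece, and the identifications need not agree along the creases, so a clutching computation would have to show that the corner contributions to the normal Euler number vanish. Until one of these is actually carried out, $[L]\cdot[L]=-\chi(L)$ cannot be invoked for $\Sigma_+$.

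By contrast, the paper sidesteps all smoothness and Lagrangian issues: it perturbs $\Sigma_+$ to a nearby sphere $\Sigma'_+$ by pushing the image $g(\Sigma'_+)$ off the three segments, so that $\Sigma_+\cap\Sigma'_+$ consists of four transverse intersection points lying in the smooth interiors of the pieces --- three negative ones at the centers of $D_1,D_2,D_3$ and one positive one at a point $(b,c,c)$ --- giving $3\cdot(-1)+1=-2$. That argument is elementary and stays entirely within the explicit model, whereas your approach, if the smoothing at the edges and especially at $P_0$ were completed, would buy the conceptual statement that $\Sigma_\pm$ are (smoothable to) Lagrangian spheres, which explains the value $-2$ a priori; as written, though, the decisive step is missing.
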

\begin{proof}
We slightly deform the sphere $\Sigma_+$ into $\Sigma'_+$ so that the 
intersection $\Sigma_+\cap \Sigma'_+$ consists of four points 
at which smooth portions of the spheres meet transversely. 
Indeed, we deform it by making the image $g(\Sigma'_+)$ the union of 
the dotted arcs depicted in Fig.~\ref{Y3}. 

\begin{figure}[h]
\centering
\includegraphics[width=40mm]{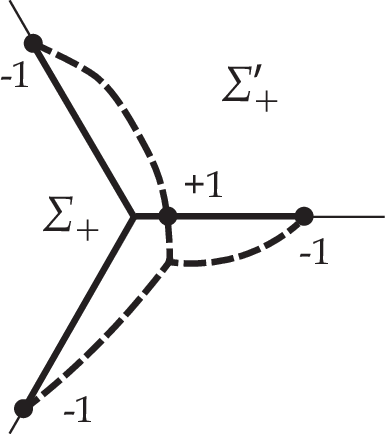}
\caption{The image of the sphere $\Sigma_+$ is changed from the union of the bold line segments 
to that of the dotted arcs. Each of the numbers indicates the sign of the intersection over there. }
\label{Y3}
\end{figure}

The intersection $\Sigma_+\cap \Sigma'_+$ consists 
of three negative intersections at the centers of the disks $D_m\subset \Sigma_+$ ($m=1,2,3$) 
and another point $(x,y,z)=(b,c,c)$ satisfying that $b,c\in\R_{>0}$, $bc^2=a^{-2}$ and $0<b<2\sqrt{2}c$. 
Then, we see that $(-2E_x+E_y+E_z, e_y-e_z)_{(b,c,c)}$ is an oriented basis of 
the tangent space $T_{(b,c,c)}\Sigma_+$.
Further, we may assume that $(-E_y+E_z, e_z-e_x)_{(b,c,c)}$ is an oriented basis of $T_{(b,c,c)}\Sigma'_+$. 
This implies that the intersection of $\Sigma_+$ and $\Sigma'_+$ at $(b,c,c)$ is positive. 
Therefore, $[\Sigma_+,\Sigma_+]=3\cdot(-1)+1=-2$. 
\end{proof}
We see that the above union of surfaces is a geometric realization 
of the left-hand diagram of the Milnor lattice in Fig.~\ref{TT(pqr)}.
 
\begin{figure}[h]
\includegraphics[width=55mm]{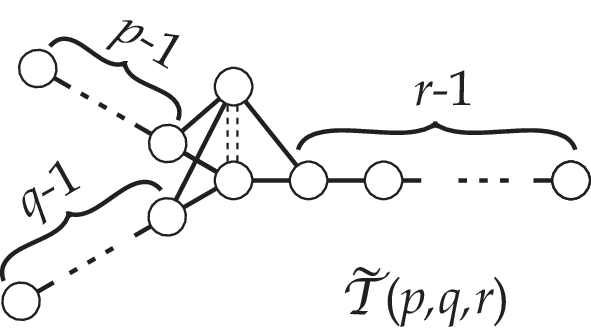}
\qquad\qquad
\includegraphics[width=55mm]{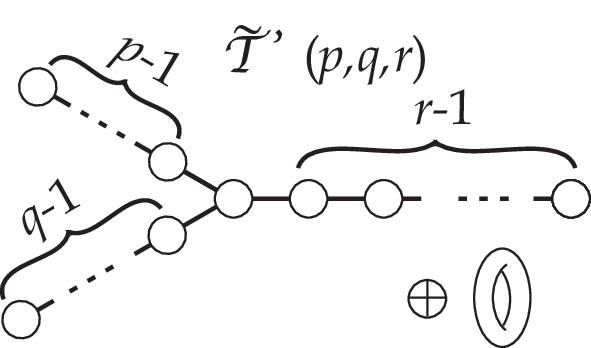}
\caption{The Milnor lattice. The left-hand diagram $\widetilde{\mathcal T}(p,q,r)$ is equivalent to the right-hand one $\widetilde{\mathcal T}'(p,q,r)=
{\mathcal T}(p,q,r) \oplus \langle[T^{2}]\rangle$.}
\label{TT(pqr)}
\end{figure}

Using the generator $\mathcal{S}=([\Sigma_{1,*}], [\Sigma_{2,*}], [\Sigma_{3,*}], [\Sigma_\pm])$, we have the intersection matrix 
\[
\begin{pmatrix}
-A_{p-1}&&&\delta_{*,1}&\delta_{*,1}\\
&-A_{q-1}&&\delta_{*,1}&\delta_{*,1}\\
&&-A_{r-1}&\delta_{*,1}&\delta_{*,1}\\
\delta_{1,*}&\delta_{1,*}&\delta_{1,*}&-2&-2\\
\delta_{1,*}&\delta_{1,*}&\delta_{1,*}&-2&-2
\end{pmatrix},
\]
where $A_n$ denotes the block $\begin{pmatrix}
2&-1&\\
-1&2&\ddots\\
&\ddots&\ddots&-1\\
&&-1&2
\end{pmatrix}$, and 
$\delta_{*,1}$ etc. the arrangement of 
the Kronecker delta $\delta_{*,*}$. 
We can also realize the left-hand diagram 
by using the generator 
$\mathcal{S}'=([\Sigma_{1,*}], [\Sigma_{2,*}], [\Sigma_{3,*}], [\Sigma_+], [T^2])$. Then the matrix is  
\[
\begin{pmatrix}
-A_{p-1}&&&\delta_{*,1}\\
&-A_{q-1}&&\delta_{*,1}\\
&&-A_{r-1}&\delta_{*,1}\\
\delta_{1,*}&\delta_{1,*}&\delta_{1,*}&-2\\
&&&&0
\end{pmatrix}.
\]
In the case where $\frac{1}{p}+\frac{1}{q}+\frac{1}{r}<1$, the annihilator is nothing other than $\langle [T^2] \rangle$ since 
\begin{align*}
\mathrm{disc}\, \mathcal{T}(p,q,r)&=
\begin{vmatrix}
-A_{p-1}&&&\delta_{*,1}\\
&-A_{q-1}&&\delta_{*,1}\\
&&-A_{r-1}&\delta_{*,1}\\
\delta_{1,*}&\delta_{1,*}&\delta_{1,*}&-2
\end{vmatrix}\\
&=(-1)^{p+q+r-2}\cdot(qr+rp+pq-pqr)\neq 0.
\end{align*}
Here we can show the equivalence 
\[
|\mathrm{disc}\, \mathcal{T}(p,q,r)|=1\quad \Longleftrightarrow \quad 
\{p,q,r\}=\{2,3,7\}.
\] 
In the case where $(p,q,r)=(2,3,7)$, if we permute the generator $\mathcal{S'}$ to 
\[
\mathcal{S''}=([\Sigma_{2,2}], [\Sigma_{2,1}], [\Sigma_+], [\Sigma_{3,1}],\dots, [\Sigma_{3,6}], [\Sigma_{1,1}], [T^2]), 
\]
the non-degenerate part of the intersection matrix  
becomes the negative of the Cartan matrix $\begin{pmatrix}
A_9&-\delta_{*,3}\\
-\delta_{3,*} &2
\end{pmatrix}
$ for $E_{10}(\cong E_8 \oplus H)$.  

\subsection{The monodromy of the Milnor fibration}
Let $\widetilde{\mu}: X_{p,q,r}\to X_{p,q,r}$ denote the monodromy map 
of the Milnor fibration of the $T_{p,q,r}$ singularity. 
We describe the induced map 
$\widetilde{\mu}_*: H_2(X_{p,q,r};\Z) \to H_2(X_{p,q,r})$ by using 
the second generator $\mathcal{S}'$ in the previous section. 
To this aim, we write $Y=Y_\theta$ to specify the parameter $\theta$ as in the last paragraph of \S~\ref{construction}, and 
consider the monodromy map $\mu: Y_{2\pi} \to Y_0$ of 
the fibration 
\[
\textrm{pr}\colon \bigsqcup_{\theta\in \R/2\pi\Z} Y_\theta\to \R/2\pi\Z: Y_\theta \mapsto \theta
\]
instead of $\widetilde{\mu}$. 
We trivialize the cylinder 
$\displaystyle \bigsqcup_{0\leq \theta <2\pi} Y_\theta \cong Y\times[0,2\pi)$ to define $\mu$.

We take an open set $N_0$ on the fiber $Y_0$ such that
\[
N_0\supset 
\{\arg g=\frac{\pi}{3},\,\, \frac{\pi}{2}\,\, \mathrm{or} \,\, \frac{5\pi}{3}\}\cap Y_0
\]
and $N_0\cap\textrm{supp}(\vphi_m)=\emptyset$ ($m=1,2,3$). For any point $(x,y,z)$ of $N_0$, 
we have the point $(x,y,ze^{-i\theta})$ on each fiber $Y_\theta$. 
This defines the trivialization $N_0\times [0,2\pi)$ 
such that $\mu|_{N_0}$ is the identity. Then $\mu(T^2)=T^2$, and therefore $\mu_*([T^2])=[T^2]$.

On the other hand the singularities of $\Sigma_1=(g|_{Y_\theta})^{-1}(a^{-\frac{2}{p}})$ are the points 
\[(x,y,z)=(a^{-\frac{1}{p}}\exp(\frac{2\pi j+\theta}{p}i),\; 0,\; 0)\quad (j=0,1,\dots p-1).\] 
Thus $\mu$ sends the spheres $\Sigma_{1,j}$, $\Sigma_{2,k}$, $\Sigma_{3,l}$ respectively to $\Sigma_{1,j+1}$, $\Sigma_{2,k+1}$, $\Sigma_{3,l+1}$ ($j\in \Z_p$, $k\in \Z_q$, $l\in \Z_r$). This implies    
\begin{align*}
&\mu_*([\Sigma_{1,j}])=[\Sigma_{1,j+1}]\,\,(j=1,\dots,p-2), & 
\mu_*([\Sigma_{1,p}])=[T^2]-[\Sigma_{1,1}]-\cdots-[\Sigma_{1,p}],\\
&\mu_*([\Sigma_{2,k}])=[\Sigma_{2,k+1}]\,\,(k=1,\dots,q-2), & 
\mu_*([\Sigma_{2,q}])=[T^2]-[\Sigma_{2,1}]-\cdots-[\Sigma_{2,q}],\\
&\mu_*([\Sigma_{3,l}])=[\Sigma_{3,l+1}]\,\,(l=1,\dots,r-2), & 
\mu_*([\Sigma_{3,r}])=[T^2]-[\Sigma_{3,1}]-\cdots-[\Sigma_{3,r}].
\end{align*}
Now we may assume that the monodromy map $\mu$ preserves the fibration $g|_{Y_0}$ and 
it is periodic near the singular fibers. Then we have 
\[
\mu_*([\Sigma_+])=[\Sigma_+]+[\Sigma_{1,1}]+[\Sigma_{2,1}]+[\Sigma_{3,1}]-[T^2], 
\]
where the last term $-[T^2]$ comes from the rotation of the last component 
of the point $(x,y,ze^{-i\theta})$ presenting $\{*\}\times [0,2\pi)\subset N_0\times [0,2\pi)$. 
In fact, the singularities of $\Sigma_3=(g|_{Y_\theta})^{-1}(e^{\frac{4\pi i}{3}} a^{-\frac{2}{r}})$ are the points 
\[(x,y,ze^{-i\theta})=(0,\; 0,\; a^{-\frac{1}{r}}
\exp(\frac{2\pi l-(r-1)\theta}{r}i))\quad (l=0,1,\dots r-1).\]  

\subsection{The section}
There is another remarkable surface properly embedded in the Milnor fiber $X_{p,q,r}$. 
It intersects with the regular fiber $T^2$ while it avoids any other surfaces in 
the second system $\mathcal{S}'$. Namely, 
\begin{proposition}\label{Prop:section}
The Lefschetz fibration of the Milnor fiber $X_{p,q,r}$ 
of a $T_{p,q,r}$ singularity  
$(\frac{1}{p}+\frac{1}{q}+\frac{1}{r}\leq 1)$ admits a section 
$s=s_{p,q,r} \colon D^{2} \to X_{p,q,r}$ 
which does not intersect with  
any of the spheres in the system $\mathcal{S}'$ representing the cycles of $\mathcal{T} (p,q,r)$.  
\end{proposition}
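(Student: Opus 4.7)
The plan is to write down an explicit section using the phase structure of the moment map $g$. Over $w\in D^2_{1/3}$, I will set
\[
s(w)=(u(w)\,e^{i\alpha},\; v(w)\,e^{i\beta},\; w_0(w)\,e^{i\gamma})
\]
with fixed phases $\alpha,\beta,\gamma$ satisfying $\alpha+\beta+\gamma\equiv 0\pmod{2\pi}$, chosen close to $2\pi/3$ but generic enough that $\alpha\neq 2\pi j/p$, $\beta\neq 2\pi k/q$, $\gamma\neq 2\pi l/r$ for any admissible $j,k,l$ (so $s$ avoids the $p+q+r$ critical points of $g|_{X_1}$ on the coordinate axes). In the region where $\vphi_1=\vphi_2=\vphi_3\equiv 0$, the defining equation $h=1/a$ of $X_1$ reduces to $uvw_0=a^{-2}$, and the restriction of $g$ to this constraint surface is
\[
\Phi(u,v,w_0)=u^2+e^{2\pi i/3}v^2+e^{4\pi i/3}w_0^2.
\]

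I would first show $\Phi:\{uvw_0=a^{-2}\}\to\C$ is a global diffeomorphism onto a region containing $D^2_{1/3}$. Its differential at the symmetric point $u=v=w_0=a^{-2/3}$ is a direct computation (the two tangent directions map to vectors with different arguments in $\C$), so the Jacobian is nonsingular; $|\Phi|^2$ is a proper exhaustion of the constraint surface whose only critical point is this symmetric minimum, hence the level sets of $|\Phi|$ are topological circles on which $\arg\Phi$ is monotone, giving global injectivity. For $a$ sufficiently large (the standing assumption from \S\ref{Milnor fiber}) the image contains $D^2_{1/3}$ and the corresponding triples $(u,v,w_0)$ remain in the region where all $\vphi_m$ vanish, so $s(w)\in Y$ is well-defined.

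Next I check $s(D^2_{1/3})$ is disjoint from every sphere of $\mathcal{S}'\setminus\{T^2\}$. At $w=0$ the point $s(0)$ has $\arg x,\arg y,\arg z$ close to $2\pi/3$, placing it in the interior of the negative triangular region $T_-$ and therefore outside both $T_+$ and the boundary arcs $\partial D_m\subset\Sigma_+$; for $w\neq 0$ the section does not touch $T^2$. Over each edge of the tripod $g(\Sigma_+)$, the intersection of $\Sigma_+$ with a regular fiber is contained in the corresponding disk $D_m$, which requires $\arg x=0$, $\arg y=0$, or $\arg z=0$; the generic phase choice keeps $s$ away from each, so $s\cap\Sigma_+=\emptyset$.

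At each critical value the section meets the singular fiber in a single regular point lying in the interior of exactly one component sphere; cyclically relabel the component spheres so that this hit component becomes $\Sigma_{m,0}$. This relabeling is legitimate because the $\Z_p, \Z_q, \Z_r$ actions permute the labels while leaving the intersection matrix of $\mathcal{S}'$ invariant, so the new system still generates the same sublattice $\widetilde{\mathcal{T}}'(p,q,r)$. Then $s$ avoids every $\Sigma_{m,j}$ with $j\geq 1$, completing the argument. The main technical obstacle is the global diffeomorphism property of $\Phi$; if the direct analysis of the level curves of $|\Phi|$ turns out inconvenient, an alternative is to apply the inverse function theorem near $0$ and extend $s$ over the three critical values by a partition-of-unity deformation of the phases, keeping them bounded away from the forbidden angles $2\pi j/p,\,2\pi k/q,\,2\pi l/r$ and from $0\pmod{2\pi}$ throughout.
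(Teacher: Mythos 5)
There is a genuine gap, in fact two. First, your explicit formula only defines points of $X_1$ in the region where $\vphi_1=\vphi_2=\vphi_3=0$, but the fibers you must pass through force you out of that region. On the constraint surface $uvw_0=a^{-2}$ the preimage under $\Phi$ of a point $\zeta$ near a critical value, say $\zeta=a^{-2/p}$, has $v=w_0$, $u^2-v^2=a^{-2/p}$, $uv^2=a^{-2}$, hence $u\geq a^{-1/p}$ and $\sqrt{v^2+w_0^2}/u\leq \sqrt{2}\,a^{-1+\frac{3}{2p}}\ll \frac16$ for large $a$; there $\vphi_1=1$, and the defining equation of $X_1$ is $\vphi_1x^p+axyz=a^{-1}$, not $axyz=a^{-1}$. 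With your fixed phases ($\alpha+\beta+\gamma\equiv 0$) and $uvw_0=a^{-2}$ this would force $u^pe^{ip\alpha}=0$, so the proposed points simply do not lie on $X_1$ on the three radial sectors of $D^2_{1/3}$ pointing toward $1$, $e^{2\pi i/3}$, $e^{4\pi i/3}$ — precisely the neighborhoods of the singular fibers, which is where the whole content of the proposition sits. Your fallback (inverse function theorem near $0$ plus a phase deformation) does not repair this, because the failure is not injectivity of $\Phi$ but the fact that the equation of $X_1$ changes in the supports of the $\vphi_m$, so the moduli, not only the phases, must be adjusted there.

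Second, the relabeling step is not legitimate. The system $\mathcal{S}'$ is a fixed collection: the labels $\Sigma_{m,j}$ are pinned down by the $\arg$-intervals, and $\Sigma_+$ is built from the disks $D_m$ sitting over $\arg x=0$, $\arg y=0$, $\arg z=0$, meeting exactly $\Sigma_{m,0}$ and $\Sigma_{m,1}$ at the center of $D_m$. A cyclic shift of the labels does not preserve this incidence (compare $\mu_*[\Sigma_+]=[\Sigma_+]+[\Sigma_{1,1}]+[\Sigma_{2,1}]+[\Sigma_{3,1}]-[T^2]\neq[\Sigma_+]$ in \S 3.2), so the relabeled collection is neither $\mathcal{S}'$ nor even a system with the same intersection matrix; and the proposition is a statement about $\mathcal{S}'$ itself. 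Concretely, with $\arg x$ held near $2\pi/3\in\bigl(0,\tfrac{2\pi(p-1)}{p}\bigr)$ your section would meet $\Sigma_1$ in some $\Sigma_{1,j}$ with $1\leq j\leq p-1$, which belongs to $\mathcal{S}'$. The paper avoids this by letting the arguments vary along the section: it starts at the center of $T_-$ (all arguments $\tfrac{2\pi}{3}$, $|xyz|=a^{-2}$) and extends the section inside $T_-$-type regions so that, approaching each singular fiber, $\arg x$ drifts into $\bigl(\tfrac{2\pi(p-1)}{p},2\pi\bigr)$ with $\arg y,\arg z$ small and positive; the section then pierces exactly the excluded spheres $\Sigma_{m,0}$ and stays off the disks $D_m\subset\Sigma_+$ and off every sphere of $\mathcal{S}'$. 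To salvage your approach you would need $w$-dependent phases realizing this drift and a correction of the moduli inside $\mathrm{supp}(\vphi_m)$ so that the image actually lies on $X_1$.
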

\begin{proof}
We suppose that the Milnor fiber is defined by $\theta=0$. 
Take a local section near the origin which intersects with $T^2$ at the center 
of the negative triangular region $T_-$. We suppose that it is expressed as 
\[
\arg x=\arg y=\arg z=\frac{2\pi}{3},\quad |xyz|=a^{-2}, 
\]
where $|x|$, $|y|$, $|z|$ are close to $a^{-\frac{2}{3}}$. 
On the other hand, there is a point on the negative region $T_-$ satisfying 
$-\frac{2\pi(p-1)}{p}<\arg x<2\pi$. 
On such a point, $\arg y$ and $\arg z$ are small positive angles. Then, recalling the definition 
\begin{align*}
&D_1:=\left\{ |y|=|z|, \,\, 0\leq |x|^2-|y|^2\leq a^{-\frac{2}{p}},\,\,
\arg x=0 \right\}\cap Y,\\
&D_2:=\left\{ |z|=|x|, \,\, 0\leq |y|^2-|z|^2\leq a^{-\frac{2}{q}},\,\, 
\arg y=0 \right\}\cap Y,\\
&D_3:=\left\{ |x|=|y|, \,\, 0\leq |z|^2-|x|^2\leq a^{-\frac{2}{r}},\,\, 
\arg z=0 \right\}\cap Y
\end{align*}
of the parts of $\Sigma_+$, 
we see that the section can be extended so that it does not intersect with $\Sigma_+$ and 
it intersect with each of $\Sigma_{m,0}\not\in \mathcal{S}'$ ($m=1,2,3$). 
\end{proof}

\section{\large Decomposition of K3 surface} 

In this section, we describe a smooth decomposition of a K3 surface 
into the two Milnor fibers of cusp singularities 
of a duality pair in the extended strange duality.  
For more precise definitions and explanations on strange and 
extended strange duality, 
refer to \cite{A}, \cite{P1}, \cite{EW}, or \cite{Nakamura2}.  
 
\subsection{The extended strange duality and Hirzebruch-Inoue surfaces}

\subsubsection{Strange duality of Arnol'd}
Among isolated hypersurface singularities of complex three variables, 
rational singularities, in other words, ADE-type singularities are 
exactly those of modality zero. 
Here the modality of a singularity 
is roughly the number of parameters involved in its classification. 
Simple-elliptic singularities and cusp singularities \textcolor{black}{contain the parameter $a$, and therefore they} are of modality 1 (also said to be unimodal, 1-modal, or 
unimodular).  Other than these, there still exist 14 unimodal singularities,  
which are called the {\it exceptional unimodal singularities}, \textcolor{black}{
listed in Table \ref{exceptional}}, 
and no more unimodal ones exist. 
\begin{table}[h]
\centering
\begin{tabular}{ccccc}\hline
Singularity  & Gabrielov \# & polynomial & Dolgachev \# & Dual 
\\
\hline
$E_{12}$ {\tiny a.k.a.}$S_{2, 3, 7}$
 & 2, 3, 7 & $x^{2}+y^{3} +z^{7}$ & 2, 3, 7 & $E_{12}$ 
\\
$Z_{11}$ \quad $S_{2, 4, 5}$
& 2, 4, 5 & $x^{2}+y^{3}z +z^{5}$ & 2, 3, 8 & $E_{13}$ 
\\
$Q_{10}$  \quad $S_{3, 3, 4}$
& 3, 3, 4 & $x^{3}+y^{2}z +z^{4}$ & 2, 3, 9 & $E_{14}$ 
\\
$E_{13}$  \quad $S_{2, 3, 8}$
& 2, 3, 8 & $x^{2}+y^{3} +yz^{5}$ & 2, 4, 5 & $Z_{11}$ 
\\
$Z_{12}$  \quad $S_{2, 4, 6}$
& 2, 4 6 & $x^{2}+y^{3}z +yz^{4}$ & 2, 4, 6 & $Z_{12}$ 
\\
$Q_{11}$  \quad $S_{3, 3, 5}$
& 3, 3, 5 & $x^{2}y+y^{3}z +z^{3}$ & 2, 4, 7 & $Z_{13}$ 
\\
$E_{14}$  \quad $S_{2, 3, 9}$
& 2, 3, 9 & $x^{3}+y^{2} +yz^{4}$ & 3, 3, 4 & $Q_{10}$ 
\\
$Z_{13}$  \quad $S_{2, 4, 7}$
& 2, 4, 7 & $x^{2}+xy^{3} +yz^{3}$ & 3, 3, 5 & $Q_{11}$ 
\\
$Q_{12}$  \quad $S_{3, 3, 6}$
& 3, 3, 6 & $x^{2}+y^{2}z +yz^{3}$ & 3, 3, 6 & $Q_{12}$ 
\\
$W_{12}$  \quad $S_{2, 5, 5}$
& 2, 5, 5 & $x^{5}+y^{2} +yz^{2}$ & 2, 5, 5 & $W_{12}$ 
\\
$S_{11}$  \quad $S_{3, 4, 4}$
& 3, 4, 4 & $x^{2}y+y^{2}z +z^{4}$ & 2, 5, 6 & $W_{13}$ 
\\
$W_{13}$  \quad $S_{2, 5, 6}$
& 2, 5, 6 & $x^{2}+xy^{2} +z^{4}$ & 3, 4, 4 & $S_{11}$ 
\\
$S_{12}$  \quad $S_{3, 4, 5}$
& 3, 4, 5 & $x^{3}y+y^{2}z +xz^{2}$ & 3, 4, 5 & $S_{12}$ 
\\
$U_{12}$  \quad $S_{4, 4, 4}$
& 4, 4, 4 & $x^{4}+y^{2}z +yz^{2}$ & 4, 4, 4 & $U_{12}$ 
\\
\hline
\end {tabular}
\vspace{\baselineskip}
\caption{14 exceptional unimodal singularities}
\label{exceptional}
\end{table}
\textcolor{black}{
In the table, the parameters are taken so that the 
polynomials defining the singularities are quasi-homogeneous. 
We notice that, for each of the singularities}, we have two labeling of 
\textcolor{black}{it} such as $E_{12}$  
and $S_{2,3,7}$. The second one is indexed by the Gabrielov triple  
which indicates the intersection form ${\mathcal T}(p,q,r)\oplus H$  
of the Milnor fiber, where 
$H=\begin{pmatrix}
0&1\\1&0
\end{pmatrix}$.   
In the Dynkin diagram ${\mathcal T}(p,q,r)$ 
each white vertex indicates a $-2$-rational curve and an edge 
connecting two vertices implies their positive transverse intersection (see Fig.~\ref{T(pqr)}). 

\begin{figure}[h]
\centering
\includegraphics[width=60mm]{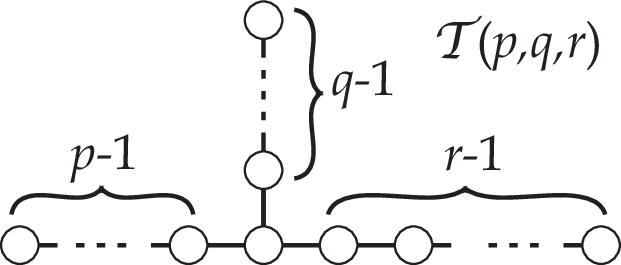}
\caption{The Dynkin diagram $\mathcal{T}(p,q,r)$}
\label{T(pqr)}
\end{figure}

The Dolgachev triple $(p', q', r')$ indicates a non-minimal resolution 
which consists of three mutually disjoint rational curves 
with self-intersection $-p'$, $-q'$, and $-r'$ and 
an exceptional rational curve transversely intersects 
with each of three rational curves (see Fig.~\ref{Dol}). 

\begin{figure}[h]
\centering
\includegraphics[width=110mm]{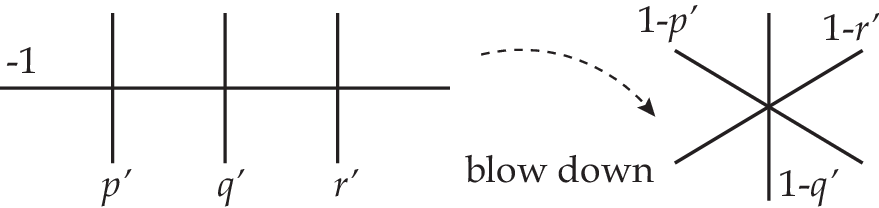}
\caption{The non-minimal resolution.}
\label{Dol}
\end{figure}

{\it Arnol'd's strange duality} consists of ten pairs of exceptional singularities, 
among which six pairs are self-dual and four are not. 
Many more interesting properties are exchanged between the dual 
partners, while the most remarkable phenomenon is that 
between the dual partners, the Gabrielov triple and the Dolgachev 
triple are exchanged.  

\subsubsection{Pinkham's interpretation by the K3 lattice}
\label{subsubsection:Pinkham}

Pinkham \cite{P1} interpreted these strange phenomena 
in the following way. 
The affine surface with the singularity $S_{p,q,r}$ admits a compactification 
whose resulting surface is smooth away from the original singularity 
and has the divisor at infinity which realizes the Dynkin diagram 
${\mathcal T}(p',q',r')$ of the dual partner, 
in other words, the Dynkin diagram indexed 
not by its Gabrielov triple but by its Dolgachev triple. 
Then the surface has a deformation to a K3 surface 
where it preserves the divisor at infinity while the singularity 
is smoothen so that the complement of the divisor is the Milnor fiber 
because of the quasi-homogeneity. 
Then it is seen that in the K3 lattice 
(the 2nd integral homology of a K3 surface with the intersection 
$2E_{8}\oplus 3H$, the rank is 22) the lattice 
${\mathcal T}(p',q',r')$ for the divisor at infinity  and 
that  ${\mathcal T}(p,q,r)\oplus H$  for the Milnor fiber 
are placed as the orthogonal complement. 

\subsubsection{The extended strange duality}

Nakamura \cite{Nakamura1} and Looijenga \cite{Lo} 
found that yet another but similar duality phenomenon 
exists among 14 cusp singularities $T_{p,q,r}$'s 
with exactly the same index triples as those 
of the exceptional singularities 
$S_{p,q,r}$'s.  The new duality is called 
the {\it  extended strange duality}. 

For the cusp singularity  $T_{p,q,r}$, 
the Milnor lattice 
is indicated by the Dynkin diagram $\widetilde{\mathcal T}(p,q,r)$ 
(see\cite{Gabrielov}, also \cite{Ke} and 
\S \ref{Section:MilnorLattice} of the present article).   
%
%
Similarly in the case of the strange duality of Arnol'd, 
between the cusp singularities in an extended strange duality pair, 
the structure of the Milnor lattice and that of 
the cycles of their resolutions are exchanged.  
If the triples $(p, q, r)$ and $(p', q',r')$ are dual to each other
in the list,  
\EG  $(2,3,9)$ and $(3,3,4)$, the cusp singularity  $T_{p,q,r}$ 
admits a resolution consisting of a cycle of three rational curves 
with self-intersection $1-p'$, $1-q'$, $1-r'$ 
each of which transversely intersects once to any others 
at distinct points. Remark here that if $p'=2$ (
 $p'\leq q' \leq r'$) the resolution is not minimal. 
 If $p'=2$ and $q'\geq 4$ then the first rational curve is exceptional and 
 blown down so that we have the minimal resolution consisting of 
 two rational curves  with self-intersection $2-q'$ and $2-r'$ 
 which transversely intersects to each other twice.  
 If $p'=2$ and $q'\geq 4$ after the first curve is blown down, 
 the second one becomes exceptional and is also blown down, 
so that the minimal resolution consists of a single rational curve 
with a node and self-intersection $6-r'$. See Fig.~\ref{cycles}. 

\begin{figure}[h]
\centering
\includegraphics[width=140mm]{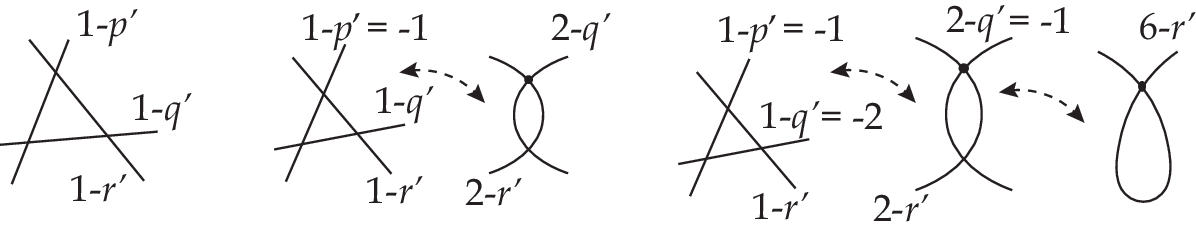}
\\
\smallskip
$3\leqq p'$
\qquad\qquad
$p'=2,\quad 4\leqq q' $
\qquad\qquad\qquad\qquad
$p'=2, q'=3$
\qquad
\qquad
\qquad

\caption{The cycles of rational curves.}
\label{cycles}
\end{figure}

Now recall that the link of the cusp singularity $T_{p,q,r}$ is the $T^{2}$-bundle 
over the circle with the hyperbolic monodromy  
$$
A_{p,q,r}
=\begin{pmatrix}   
r-1&-1\\1&0
\end{pmatrix}
\begin{pmatrix}   
q-1&-1\\1&0
\end{pmatrix}
\begin{pmatrix}   
p-1&-1\\1&0
\end{pmatrix}. 
$$ 
\begin{proposition}\label{Prop:DualMonodromies}
The triples $(p, q, r)$ and $(p', q',r')$ are dual to each other 
in the extended strange duality if and only if the monodromies 
$A_{p,q,r}$ and $A_{p',q',r'}$ are conjugate to the inverse of 
the other in ${\mathit{SL}}(2;\Z)$.   
\end{proposition}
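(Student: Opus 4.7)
The plan is to reduce the claim to the classical Hirzebruch--Zagier reciprocity on cyclic sequences of integers. Recall from Section~1.1 that if the minimal resolution of a cusp singularity has an exceptional cycle of rational curves with self-intersections $-b_1,\ldots,-b_n$ (where $b_i\geq 2$ and some $b_j\geq 3$), then the link is a $T^2$-bundle over $S^1$ with monodromy
$$
B(b_1,\ldots,b_n)\;=\;\begin{pmatrix}0&1\\-1&b_1\end{pmatrix}\cdots \begin{pmatrix}0&1\\-1&b_n\end{pmatrix},
$$
well-defined up to cyclic permutation of the indices and hence up to conjugation in $SL(2;\Z)$. The first step of the proof is to show that for each triple $(p,q,r)$ the matrix $A_{p,q,r}$ in the statement is $SL(2;\Z)$-conjugate to $B(b_1,\ldots,b_n)$ for the cycle $(b_1,\ldots,b_n)$ extracted from the minimal resolution by applying successive blow-downs to the Dolgachev cycle depicted in Figure~\ref{cycles}; this is a matter of expanding the two definitions and using the standard matrix identities that encode a blow-down of a $(-1)$-curve lying between two adjacent curves of the cycle.

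The second step is the classical Hirzebruch--Zagier involution on admissible cycles. Given $(b_1,\ldots,b_n)$, write it cyclically as alternating blocks of a single entry $\geq 3$ followed by a (possibly empty) run of $2$'s:
$$
(b_1,\ldots,b_n)\;=\;\bigl(a_1+2,\underbrace{2,\ldots,2}_{c_1-1},\,a_2+2,\underbrace{2,\ldots,2}_{c_2-1},\ldots,a_k+2,\underbrace{2,\ldots,2}_{c_k-1}\bigr),
$$
with $a_j,c_j\geq 1$. The Hirzebruch--Zagier dual cycle is then
$$
(b_1^{*},\ldots,b_m^{*})\;=\;\bigl(c_1+2,\underbrace{2,\ldots,2}_{a_1-1},\,c_2+2,\underbrace{2,\ldots,2}_{a_2-1},\ldots,c_k+2,\underbrace{2,\ldots,2}_{a_k-1}\bigr).
$$
A direct matrix manipulation then shows that $B(b_1,\ldots,b_n)^{-1}$ is $SL(2;\Z)$-conjugate to $B(b_1^{*},\ldots,b_m^{*})$; the manipulation reduces, by induction on $n$, to a small collection of $SL(2;\Z)$ identities that allow one to trade a factor $\begin{pmatrix}0&1\\-1&b\end{pmatrix}^{-1}$ against a block of factors built from the matrices $\begin{pmatrix}0&1\\-1&2\end{pmatrix}$.

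The final step is to verify, case by case using Table~\ref{exceptional}, that the extended-strange-duality pairing of Nakamura and Looijenga is exactly the Hirzebruch--Zagier pairing on the associated resolution cycles. This is essentially tautological once one unpacks the definitions: the extended strange duality was introduced by Nakamura~\cite{Nakamura1} precisely so that dual pairs of cusps have Hirzebruch--Zagier dual cycles of self-intersections. The main obstacle is the matrix-level reciprocity asserted in step two; although classical, giving a clean elementary proof requires careful bookkeeping of $SL(2;\Z)$-conjugations, while steps one and three are bookkeeping of another sort from the tables and resolution diagrams.
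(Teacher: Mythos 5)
Your proposal is correct in outline, but it takes a genuinely different route from the paper's. The paper settles Proposition~\ref{Prop:DualMonodromies} either by direct computation for the ten pairs in the list or, as it prefers, geometrically: a dual pair occurs as the two cusps $\infty$ and $\infty^-$ of a single Hirzebruch--Inoue surface $S_{s}(\mathfrak{m},V)$, whose links are one and the same solvmanifold $M_{\Gamma}$ carrying opposite orientations, and reversing the orientation of the base circle of the $T^{2}$-bundle replaces the monodromy by its inverse; this argument costs nothing once the Hilbert-modular picture is set up and it simultaneously yields Corollary~\ref{Cor:DualMonodromies}. You replace this by matrix arithmetic: identify $A_{p,q,r}$ with the cycle product $B(b_1,\dots,b_n)$ of the minimal resolution, invoke the Hirzebruch--Zagier reciprocity $B(b^{*})\sim B(b)^{-1}$, and check on the list that the Nakamura--Looijenga pairing is the Hirzebruch--Zagier pairing of resolution cycles; your reciprocity is exactly the continued-fraction rule the paper records in \S 4.1 (exchanging the $\gamma_j$'s and $\delta_j$'s, after \cite{HZ,Nakamura2}), and the blow-down step in your first reduction is the exact identity $\left(\begin{smallmatrix}0&1\\-1&b\end{smallmatrix}\right)\left(\begin{smallmatrix}0&1\\-1&1\end{smallmatrix}\right)\left(\begin{smallmatrix}0&1\\-1&c\end{smallmatrix}\right)=\left(\begin{smallmatrix}0&1\\-1&b-1\end{smallmatrix}\right)\left(\begin{smallmatrix}0&1\\-1&c-1\end{smallmatrix}\right)$, so the scheme does work. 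Two observations. First, with the inputs you grant yourself the reciprocity is actually dispensable: once the resolution cycle of $T_{p,q,r}$ is taken to be the blow-down of $(1-p',1-q',1-r')$ as in Figure~\ref{cycles}, applying your step one to the \emph{dual} member gives immediately $A_{p',q',r'}\sim\left(\begin{smallmatrix}0&1\\-1&p-1\end{smallmatrix}\right)\left(\begin{smallmatrix}0&1\\-1&q-1\end{smallmatrix}\right)\left(\begin{smallmatrix}0&1\\-1&r-1\end{smallmatrix}\right)=A_{p,q,r}^{-1}$, because $\left(\begin{smallmatrix}0&1\\-1&b\end{smallmatrix}\right)=\left(\begin{smallmatrix}b&-1\\1&0\end{smallmatrix}\right)^{-1}$; your step two only becomes essential if you instead compute the resolution cycles of the $T_{p,q,r}$ independently of the duality, e.g.\ from \cite{Kar,Lau,Neumann}. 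Second, your step one tacitly assumes that the two monodromy formulas of \S 1.1 are taken with consistent orientation conventions, i.e.\ that they give conjugate rather than inverse-conjugate matrices; this is invisible for the six self-dual triples but is exactly the point at stake for pairs such as $(2,3,8)$ and $(2,4,5)$, and it is what the paper's orientation-reversal argument on $M_{\Gamma}$ keeps track of automatically, so pin those conventions down when you write the argument in full.
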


\begin{proof}
Put $J=\begin{pmatrix}0&-1\\1&0\end{pmatrix}$, $T=\begin{pmatrix}0&1\\-1&-1\end{pmatrix}$ and take 
$\pm J$, $\pm T$ in $\mathit{PSL}(2,\Z)$ which generate subgroups isomorphic to $\Z_2$, $\Z_3$, respectively. Since $\mathit{PSL}(2,\Z)$ is the free product of these subgroups, 
any matrix in $\mathit{SL}(2,\Z)$ is conjugate ($\sim$) in $\mathit{SL}(2,\Z)$ to 
\[
(-1)^s JT^{a_1}JT^{a_2}\cdots JT^{a_m}(\sim (-1)^sJT^{a_2}\cdots JT^{a_m}JT^{a_1}\sim\cdots)
\]
where $s\in\{0,1\}$ and a cycle of $a_k\in \{1,2\}$ ($k\in \Z_m$) are uniuely determined. 
Let the signed cycle $(-1)^s[a_1,\dots, a_m]$ denote the conjugacy class. The inverse class is 
$(-1)^{s+m}[3-a_m,\dots, 3-a_2,3-a_1]$. As for $A_{p,q,r}=J(TJ)^{r-1} J(TJ)^{q-1} J(TJ)^{p-1}$, 
\begin{align*}
A_{2,3,r}&= JTJTJTJ(TJ)^{r-6}TJTJ(JTJTJ)(JTJ)
\\
&
\quad \sim -J(TJ)^{r-6}T^2\in -[1^{r-6}, 2]\quad (r\geq 6)
\\
A_{2,q,r}&= JTJTJ(TJ)^{r-4}TJJTJ(TJ)^{q-4}TJTJ(JTJ)
\\
&\quad \sim J(TJ)^{r-4}T^2J(TJ)^{q-4}T^2\in [1^{r-4},2,1^{q-4},2]\quad(r\geq q\geq 4)
\\
A_{p,q,r}&= JTJ(TJ)^{r-3}TJJTJ(TJ)^{q-3}TJJTJ(TJ)^{p-3}TJ
\\
&
\quad\in -[1^{r-3},2,1^{q-3},2,1^{p-3},2]\quad (r\geq q\geq p\geq 3)
\end{align*}
where each power denotes the iteration, e.g., $(1,2)^2=1,2,1,2$. Since the cardinal of $\{k\mid a_k=2\}$ is at least $1$ and at most $3$, that of $\{k\mid a_k=1\}$ must also be at least $1$ and at most $3$ for the duality. Thus it is enough to consider 
\begin{align*}
&A_{2,3,r}~(r=7,8,9),\quad A_{2,q,r}~((q,r)=(4,5),(4,6),(4,7),(5,5),(5,6))\\
& A_{p,q,r}~((p,q,r)=(3,3,4),(3,3,5),(3,3,6),(3,4,4),(3,4,5),(4,4,4)).
\end{align*}
This narrows down the possibilities to the listed 14 items. We have
\begin{align*}
&A_{2,3,7}, A_{2,3,7}^{-1}\in-[1,2],
&&A_{2,3,8}\in-[1^2,2],~~ A_{2,3,8}^{-1}\in[1,2^2],\\
&A_{2,3,9}\in-[1^3,2],~~ A_{2,3,9}^{-1}\in-[1,2^3],
&&A_{2,4,5}\in[1,2^2],~~ A_{2,4,5}^{-1}\in-[1^2,2],\\
&A_{2,4,6}, A_{2,4,6}^{-1}\in[1^2,2^2],
&&A_{2,4,7}\in[1^3,2^2],~~ A_{2,4,7}^{-1}\in-[1^2,2^3],\\
&A_{2,5,5}, A_{2,5,5}^{-1}\in[(1,2)^2],
&&A_{2,5,6}\in[1^2,2,1,2],~~ A_{2,5,6}^{-1}\in-[1,2,1,2^2],\\
&A_{3,3,4}\in-[1,2^3],~~ A_{3,3,4}^{-1}\in-[1^3,2],
&&A_{3,3,5}\in-[1^2,2^3],~~ A_{3,3,5}^{-1}\in[1^3,2^2],\\
&A_{3,3,6}, A_{3,3,6}^{-1}\in-[1^3,2^3],
&&A_{3,4,4}\in-[1,2,1,2^2],~~ A_{3,4,4}^{-1}\in[1^2,2,1,2],\\
&A_{3,4,5}, A_{3,4,5}^{-1}\in-[1^2,2,1,2^2],
&&A_{4,4,4}, A_{4,4,4}^{-1}\in-[(1,2)^3].
\end{align*}
\end{proof}

\begin{remark}\label{Cor:DualMonodromies}
For an extended strange duality pair of cusp singularities 
$T_{p,q,r}$ and $T_{p',q',r'}$, 
their links are isomorphic to each other as oriented $T^{2}$-bundles 
over the circle if the orientation of the base circle of one of two is reversed.  
Therefore the Milnor fibers of $T_{p,q,r}$ and $T_{p',q',r'}$ can be 
glued together along their boundary Sol-manifolds   
without changing the orientation as 4-manifolds. 
Further, since any gluing diffeomorphism is isotopic to one preserving the 
$T^{2}$-bundle structures, we obtain an oriented closed $4$-manifold 
equipped with a genus-one Lefschetz fibration, regardless of the choice of the gluing diffeomorphism. 
For details of the mapping class group of a $Sol$-manifold (hyperbolic torus bundle), see \cite{Wa, GM}. 
Note that we can composite the gluing diffeomorphism with $\begin{pmatrix}
-1&0\\0&-1\end{pmatrix}$ in the fiber direction, 
which does not change the resultant Lefchetz fibration. 
Further, except in the case of the self-dual pair $(p,q,r)=(p',q',r')=(3,4,5)$, we can reverse the orientation of the fiber and the base of the Lefschetz fibration of one of the Milnor fibers before the gluing 
by using $X=\begin{pmatrix}0&1\\1&0\end{pmatrix}$ according to the formulas 
\begin{align*}
&XA_{2,3,7}X\sim A_{2,3,7}^{-1}\in -[2,1](=-[1,2]),&
&XA_{2,3,8}X\sim A_{2,3,8}^{-1}\in[2^2,1],\\
&XA_{2,3,9}X\sim A_{2,3,9}^{-1}\in-[2^3,1],&
&XA_{2,4,5}X\sim A_{2,4,5}^{-1}\in-[2,1^2],\\
&XA_{2,4,6}X\sim A_{2,4,6}^{-1}\in[2^2,1^2],&
&XA_{2,4,7}X\sim A_{2,4,7}^{-1}\in-[2^3,1^2],\\
&XA_{2,5,5}X\sim A_{2,5,5}^{-1}\in[(2,1)^2],&
&XA_{2,5,6}X\sim A_{2,5,6}^{-1}\in-[2^2,1,2,1],\\
&XA_{3,3,4}X\sim A_{3,3,4}^{-1}\in -[2,1^3],&
&XA_{3,3,5}X\sim A_{3,3,5}^{-1}\in [2^2,1^3],\\
&XA_{3,3,6}X\sim A_{3,3,6}^{-1}\in-[2^3,1^3],&
&XA_{3,4,4}X\sim A_{3,4,4}^{-1}\in[2,1,2,1^2],\\
&XA_{3,4,5}X\in-[2^2,1,2,1^2]\not\ni A_{3,4,5}^{-1},&
&XA_{4,4,4}X\sim A_{4,4,4}^{-1}\in-[(2,1)^3]
\end{align*}
derived from $X^{-1}=X$, $XJX=-J$, and $XTX=T^2$.  
This does not change the diffeomorphism-type of the resultant closed manifold from Theorem \ref{Kas-Moishezon}, while it changes the overlapping pattern of the critical values of the Lefschetz fibration. This phenomenon can also be understood through the following observation: If we drop the assumption $p\leq q\leq r$ and consider the case where $q>r$, the calculations in the above proof become as follows:  
\begin{align*}
A_{2,q,3}&=(JTJTJ)JTJTJ(TJ)^{q-6}TJTJTJ(JTJ)\\
&\quad \sim -J(TJ)^{q-6}T^2 \sim A_{2,3,q},\\
A_{2,q,4}&\sim JT^2J(TJ)^{q-4}T^2\sim J(TJ)^{q-4}T^2JT^2\sim A_{2,4,q},
\\
A_{2,6,5}&\sim J(TJ)T^2J(TJ)^2T^2\sim J(TJ)^2T^2J(TJ)T^2\sim A_{2,5,6},
\\
A_{3,q,3}&\sim JT^2J(TJ)^{q-3}T^2JT^2
\sim -J(TJ)^{q-3}T^2JT^2JT^2\sim A_{3,3,q},
\\
A_{3,5,4}&\sim -J(TJ)T^2J(TJ)^2T^2JT^2\sim A^{-1}_{3,5,4}\\
&\quad \not\sim
-J(TJ)^2T^2J(TJ)T^2JT^2\sim A_{3,4,5}\sim A^{-1}_{3,4,5}. 
\end{align*}
Thus, except in the case where $(p,q,r)=(p',q',r')=(3,4,5)$, we can glue the Milnor fibers even after swapping the subscript (or the coordinates of $\C^3$) of one of the strange duality pair. 
It is worth mentioning that 
$A_{2,3,7}
=\begin{pmatrix}   
5&-11\\1&-2
\end{pmatrix}
$ 
is conjugate in ${\mathit{SL}}(2;\Z)$ to  
$
\begin{pmatrix}
2&1\\1&1
\end{pmatrix}
$, which is known as Arnold's cat map. 
\end{remark}

As in the above proof, the statement of Proposition~\ref{Prop:DualMonodromies} 
can be proven by a direct calculation for each individual case. 
However, this relationship between the duality of cusp singularities and the conjugacy classes of the corresponding monodromy matrices 
becomes more conceptually clear from the geometry of the Hirzebruch-Inoue surfaces discussed below. 

\subsubsection
{Hirzebruch-Inoue surfaces}
(\cite{H1, H2, HV, HZ, Inoue, Nakamura2})

Hirzebruch considered certain classes of complex surfaces 
in order to understand the resolutions of
cusp singularities.  
Inoue had a rather different motivation from the complex analysis, 
namely, looking for surfaces without meromorphic functions.   

Let $K$ be a real quadratic field and $'$ denote the conjugation. 
Take a complete module $\mathfrak{m}$ (\IE a free $\Z$ module of rank 2 in $K$), 
the positive multiplicative automorphism group 
$U^{+}(\mathfrak{m})=\{\alpha\in K\,;\, \alpha \mathfrak{m}=\mathfrak{m}, \alpha>0,\, \alpha'>0\}$ which is 
known to be infinite cyclic, and its subgroup 
$V=\langle \alpha_{V}\rangle$ ($\alpha_{V}>1$)
of finite index. 
Their natural semi-direct product $\Gamma=\Gamma(\mathfrak{m},V)$ 
acts on $\Hyp^{2}$ and on $\Hyp\times\C$ 
freely and discontinuously  
by $m\cdot (z_{1}, z_{2})=(z_{1}+m, z_{2}+m')$ and 
$\alpha \cdot (z_{1}, z_{2})=(\alpha z_{1}, \alpha'z_{2})$ for 
$m\in \mathfrak{m}$ and $\alpha\in V$,  
where 
$\Hyp$ (resp. $\Low $) denotes the upper (resp. lower) half plane in $\C $.  
Then we take the following quotients, which are non-singular surfaces; 
$$
X'(\mathfrak{m},V)=\Hyp^{2}/\Gamma, 
\quad
S'(\mathfrak{m},V)=\Hyp\times\C/\Gamma, 
\quad
\check{X}'(\mathfrak{m},V)=\Hyp\times\Low/\Gamma.
$$
By adding two points at infinity $\infty$ and $\infty^{-}$, 
the surface $S'(\mathfrak{m},V)$ is compactified 
so as to become a singular normal surface 
$S_{s}(\mathfrak{m},V)$, which is called a {\em singular Hirzebruch-Inoue surface}. 
Accordingly, we have 
$X_{s}(\mathfrak{m},V)=X'(\mathfrak{m},V)\cup\{\infty\}$ 
and 
$\check{X}_{s}(\mathfrak{m},V)=\check{X}'(\mathfrak{m},V)\cup\{\infty^{-}\}$ 
which have common boundary 
$\Hyp\times\R/\Gamma$ in $S_{s}(\mathfrak{m},V)$.   
The germ $(X_{s}(\mathfrak{m},V), \infty)$ at $\infty$ is called 
a {\em cusp singularity of type $(\mathfrak{m},V)$} and also 
a {\em Hilbert modular cusp}. 
It provides a model of a singularity of a 
{\em Hilbert modular surface}, which is the coarse moduli space for principally polarized abelian surfaces with a real multiplication structure.  

\subsubsection{Duality of Hilbert modular cusps.}

\textcolor{black}{The lower half 
$\check{X}_{s}(\mathfrak{m},V)$ is 
isomorphic to the Hilbert modular cusp} 
${X}_{s}(\mathfrak{m}',V')$ 
for some $\mathfrak{m}'$ and $V'$ 
(practically  $V=V', \alpha_{V}=\alpha_{V'}$) and the duality between 
 $(X_{s}(\mathfrak{m},V), \infty)$  and 
 $(\check{X}_{s}(\mathfrak{m},V), \infty^{-})$ 
$\cong$ 
$(X_{s}(\mathfrak{m}',V'), \infty)$ is a further extension 
of the extended strange duality. 

Since $\alpha\alpha'=1$ holds for $\alpha \in V$,  
the action of $\Gamma$ on $\Hyp \times \Low$ preserves 
the function $h=y_{1}y_{2}$.   In the coordinates $(h, y_{1}, x_{1}, x_{2})$ 
of $\Hyp \times \Low \cong \R\times\R_{+}\times\R^{2}$, 
where $z_{j}=x_{j}+iy_{j}$ ($i=1,2$), 
the action of $(m,\alpha)\in \Gamma$ is 
$(h, y_{1}, x_{1}, x_{2})\mapsto 
(h, \alpha y_{1}, \alpha(x_{1}+m), \alpha(x_{2}+m'))$.  
Therefore 
$S(\mathfrak{m},V)$ is diffeomorphic to 
the product of $\R$ and 
the 3-dimensional Sol-manifold 
$M_{\Gamma}=\R_{+}\times\R^{2}/\Gamma$, 
and $M_{\Gamma}$ is the common boundary of 
${X}_{s}(\mathfrak{m},V)$ and $\check{X}_{s}(\mathfrak{m},V)$ 
in $\check{S}_{s}(\mathfrak{m},V)$, while, 
the orientation of $M_{\Gamma}$ is reversed.  
This means as an oriented $T^{2}$ bundle over the circle, 
the orientation of the base circle is reversed, 
and consequently, the monodromy is changed into its inverse. 
This is an explanation of Proposition \ref{Prop:DualMonodromies} 
and Remark \ref{Cor:DualMonodromies}. 
%
%
%

Now we describe the correspondence between $(p,q,r)$'s and $(\m,V)$'s in terms of modified continued fractions.
 
The surface $S_{s}(\m,V)$ has two singularities at $\infty$ and $\infty^{-}$ 
which can be resolved by replacing with cycles $C$ and $D$ of 
rational curves \cite{H2}.  The cycle $C=C_{1}+ \cdots +C_{n}$ consists of 
$n$ rational curves so that for $n\geqq 3$, they intersect as 
$C_{j}^{2}=-c_{j}$ and $C_{j}C_{j+1}=1$ for $j=1, \ldots , n$ mod $n$  
($c_{j}\geqq 2$ for all $j$ and $c_{k}\geqq 3$ for some $k$)
and $C_{j}C_{k}=0$, otherwise, for $n=2$, $C_{1}$ and $C_{2}$ intersect 
transversely to each other at distinct two points as depicted in the middle 
of {\color{black} Fig.~\ref{cycles}}, and for $n=1$, $C_{1}$ is a rational 
curve with a node whose self-intersection $C_1^2$ is negative. 
Here we note that the self-intersection number differs from the normal Euler number by $2$ if $n=1$, 
so we should set $c_{1}=-C_{1}^{2}+2$ in this case. 
The resolution locus $D$ of $\infty^{-}$ is 
also a cycle of rational curves with the same property. 
We obtain a compact non-singular surface $S(\m,V)$ from $S_{s}(\m,V)$ by this resolution. 
This complex surface $S(\m,V)$ is called a {\em Hirzebruch-Inoue surface}. 

A cusp singularity $T_{p,q,r}$ ($1/p+1/q+1/r < 1$) 
appears as $(X_{s}(\mathfrak{m},V), \infty)$ for the case where 
the number of the cycle $D$ is at most $3$ and 
in fact every such $(p,q,r)$ appears. 
The 10 pairs of the extended strange duality exactly  
correspond to the surfaces $S(\mathfrak{m},V)$ for the case where 
both cycles $C$ and $D$ consist of less than or equal to three rational curves. 

In the general case of resolution cycle $C=C_{1}+\cdots +C_{k}$ with 
self-intersection $C_{j}^{2}=-c_{j}\leqq -2$ ($j=1,\cdots,k$) 
and $c_{i}\geqq 3$ for some $i$, and only in the case $k=1$ 
$c_{1}=-C_{1}^{2}+2$, 
by a cyclic permutation,  
it is assumed to be in the following form; 
$$
c_{1}, c_{2}, \cdots , c_{k}
= 
\gamma_{1}, \overbrace{2, \cdots, 2}^{\delta_{n}-3}, 
\gamma_{2}, \overbrace{2, \cdots, 2}^{\delta_{n-1}-3}, 
\cdots
\gamma_{n}, \overbrace{2, \cdots, 2}^{\delta_{1}-3} 
$$
where $\gamma_{1}, \cdots, \gamma_{n}\geqq 3$ 
and $\delta_{1}, \cdots, \delta_{n}\geqq 3$, 
so that $n$ is the number of $c_{j}$'s greater than or equal to 3. 
Then the dual cycle $D=D_{1}+\cdots+D_{l}$ with self-intersection 
$D_{j}^{2}=-d_{j}$ (the same remark as above applies for the case $l=1$) 
is given by the following rule (see \cite{HZ, Nakamura2}); 
$$
d_{l}, d_{l-1}, \cdots , d_{2}, d_{1}
= 
\overbrace{2, \cdots, 2}^{\gamma_{1}-3}, \delta_{n}, 
\overbrace{2, \cdots, 2}^{\gamma_{2}-3}, \delta_{n-1},
\cdots, 
\overbrace{2, \cdots, 2}^{\gamma_{n}-3}, 
\delta_{1}.$$
 The complete module $\mathfrak{m}=\Z \oplus \Z\omega_{C}$ 
for $C$ is given by 
$\omega_{C}=[\![\overline{c_{1}\cdots c_{{k}}}]\!]$ 
and  the automorphism group $V=\langle \alpha_{V} \rangle \subset U^{+}(\mathfrak{m})$ 
is generated by the product 
$\alpha_{V}=\prod_{j=1}^{k}\omega_{C^{(j)}}$, where
$[\![\overline{c_{1}\cdots c_{{k}}}]\!]$ denotes the repeating modified continued fraction 
\begin{eqnarray*}
c_1-\dfrac{1}{c_2-\dfrac{1}{c_3-\dfrac{1}{\ddots}}}
\end{eqnarray*}
for the number series $\{ c_j \}_{j\in \Z}$ of period $k$, 
and $C^{(j)}$'s ($j=1,\cdots,k$) denote all $k$ cyclic permutations of $C$. 
Here the ordering of the dual cycles and the corresponding quadratic irrationals 
are slightly modified from those in \cite{HZ, Nakamura2}. 

If we start from a $T_{p,q,r}$ singularity with $1/p+1/q+1/r<1$, 
and identify it with $(X(\mathfrak{m}, V),\infty)$, we first obtain 
the data of the resolution cycle $D=D_{1}+\cdots+D_{l}$ of 
 $(\check{X}(\mathfrak{m}, V),\infty)$ in the following way. 
If $p\geqq 3$ then we have $l=3$ and 
put $(d_{1}, d_{2}, d_{3})=(q-1,r-1,p-1)$,  
if $p=2$ and $q\geqq 4$, then $l=2$ and  $(d_{1}, d_{2})=(q-2,r-2)$, 
if $p=2$ and $q=3$, then $l=1$ and $d_{1}=r-4$.   
Then the resolution cycle $C$ for  $(X(\mathfrak{m}, V),\infty)$ 
is obtained by the above rule. 
Also note that $\alpha_{V'}=\overline{\alpha_{V^{-1}}}$.

\begin{example} \quad For $T_{2,3,8}$, the dual resolution cycle is computed as 
follows.  $(2,3,8)$ $\mapsto$ $(-1,-2,-7)$,  blown down to $(-1, -6)$, 
again blown down to $-d_{1}=-4=D_{1}^{2}-2$, 
so that we see $\omega_{D}=[\![\overline{4}]\!]=2+\sqrt 3=\alpha_{V'}$.  
From this $(c_{1},c_{2})$ is turned out to be $(3,2)$, and obtain 
 $\ds \omega_{C}=[\![\overline{32}]\!]=\frac{3+\sqrt 3}{2}$, 
$\ds [\![\overline{23}]\!]=\frac{3+\sqrt 3}{3}$, 
and $\ds \alpha_{V}=\frac{3+\sqrt 3}{2}\cdot\frac{3+\sqrt 3}{3}
=2+\sqrt 3=\alpha_{V'}$.  
$(-2, -3)$ is blown up to $(-1, -3, -4)$ 
and hence we see the dual triple 
$(p',q',r')=(2,4,5)$. 
\par
The actions (\IE multiplications) 
of $\alpha_{V}=\alpha_{V'}=2+\sqrt 3$ on  
$\m=\Z \oplus \Z\omega_{C}$ 
and on  $\m^{\star}=\Z \oplus \Z\omega_{D}$ 
are easily seen to be conjugate over 
$\mathit{SL}(2;\Z)$ to $A_{2,3,8}$ and $A_{2,4,5}$, respectively,  
and are conjugate not to each other but 
to the inverse of each other.   
\end{example}

\subsection{Smooth Decomposition of K3 Surface} \quad
Now we show that a K3-surface can be topologically decomposed into two Milnor fibers along an embedded Sol-manifold 
in ten distinct ways, corresponding to extended strange duality pairs of cusp singularities. 
Let $T_{p,q,r}$ and $T_{p',q',r'}$,   
be the cusp singularities of 
an extended strange duality pair  
and 
 $X_{1}$ and  $X_2$ their Milnor fibers,  
 respectively.  As shown in the Main Theorem,  
 $X_{1}$ and $X_{2}$ admit Lefschetz fibrations 
 $F_{1} : X_{1}\to D^{2}$ and $F_{2}:X_{2}\to D^{2}$ 
and Proposition \ref{Prop:DualMonodromies} implies that they 
are glued together so as to become a Lefschetz 
fibration 
$$
F = F_{1}\cup F_{2} : \hat{X}=X_{1}
\cup_{\partial } X_{2}\to S^{2}
$$
of a closed 4-manifold $\hat{X}$. 
A previous result of \cite{Mi2} tells that we can also deform 
the symplectic structures on 
 $X_{1}$ and $X_{2}$ to non-exact ones 
which smoothly coincide 
on the boundaries  and thus $\hat{X}$ is a closed 
symplectic $4$-manifold.  
Remark here that for this symplectic structure, 
the fiber tori are not Lagrangian but mostly symplectic. 
Let $W$ be an elliptic K3 surface, 
with a generic elliptic fibration $\Phi:W\to \CP^{1}$, 
namely,  
every critical point is simple, \IE of complex Morse type, 
and each singular fiber contains only one critical point. 

\begin{theorem}[Smooth Decomposition of K3 Surface]
\label{Thm:MainCorollary}  \quad
For any of the extended strange duality pair 
of cusp singularities, 
$\Phi:W\to \CP^{1}$ and $F:\hat{X}\to S^{2}$ are 
smoothly isomorphic as Lefschetz fibrations 
over the 2-sphere.  
\par
It can be paraphrased that 
the smoothing of 
the singular Hirzebruch-Inoue surface 
corresponding to an extended strange duality pair 
by replacing the neighborhoods of two singularities with their 
Milnor fibers is diffeomorphic to a K3 surface. 
\end{theorem}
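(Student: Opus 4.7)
\textbf{Proof plan for Theorem \ref{Thm:MainCorollary}.} The strategy is to verify that the Lefschetz fibration $F$ constructed by boundary gluing is a \emph{generic} genus-one Lefschetz fibration over $S^2$ with exactly $24$ singular fibers and a section, and then invoke the Moishezon--Livn\'e classification.

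First I would construct $F:\hat X\to S^2$ rigorously. The Main Theorem gives Lefschetz fibrations $F_j:X_j\to D^2$ whose restrictions to the boundaries are $T^2$-bundles $\partial X_j\to \partial D^2$ whose monodromies are conjugate to $A_{p,q,r}$ and $A_{p',q',r'}$ respectively. By Proposition~\ref{Prop:DualMonodromies} and Corollary~\ref{Cor:DualMonodromies}, these two $T^2$-bundles are isomorphic once the orientation of the base circle of one is reversed; hence there is a fiber-preserving, orientation-reversing diffeomorphism $\varphi:\partial X_1\to\partial X_2$ along which the $F_j$'s can be glued. The resulting manifold $\hat X=X_1\cup_\varphi X_2$ inherits a smooth fibration over $D^2\cup_{\partial D^2} D^2\cong S^2$ whose only critical points are those of $F_1$ and $F_2$, all of genuine Lefschetz type.

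Next I would arrange the fibration to be generic and count critical points. Theorem~\ref{precise statement} locates $p+q+r$ critical points of $g|_{X_t}$ above three critical values, so after an arbitrarily small perturbation (the same perturbation used in the proof that $\mathcal S$ generates $H_2$, cf. Fig.~\ref{Y12}) one obtains a Lefschetz fibration in which each singular fiber contains a single critical point with vanishing cycle a simple closed curve on $T^2$. Applied to both halves this produces a generic genus-one Lefschetz fibration $F:\hat X\to S^2$ with $(p+q+r)+(p'+q'+r')$ singular fibers. A case-by-case inspection of the ten extended strange duality pairs in Table~\ref{exceptional} shows that this sum equals $24$ in every case (for instance, $(2,3,7)$--$(2,3,7)$ gives $12+12$, $(2,3,9)$--$(3,3,4)$ gives $14+10$, etc.). Moreover, Proposition~\ref{Prop:section} provides disk sections $s_j:D^2\to X_j$ avoiding the surfaces in $\mathcal S'$; these sections lie over the centers of the respective triangular regions $T_-$, and I would check that $\varphi$ can be chosen to match $s_1(\partial D^2)$ with $s_2(\partial D^2)$, producing a global sphere section $s:S^2\to\hat X$.

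Finally I would apply the Moishezon--Livn\'e classification of elliptic Lefschetz fibrations: any relatively minimal genus-one Lefschetz fibration over $S^2$ with only nodal singular fibers is determined up to fiber-preserving diffeomorphism by the number of singular fibers, which must be a multiple of $12$, and the resulting total space is $E(n)$. The existence of the sphere section $s$ rules out exceptional $(-1)$-spheres in the fibers and hence gives relative minimality; with $24=12\cdot 2$ singular fibers one concludes $\hat X\cong E(2)$ and $F$ is isomorphic to the standard elliptic fibration of $E(2)$. Since $E(2)$ is diffeomorphic to any smooth K3 surface and any two generic elliptic fibrations of K3 are equivalent, this yields the required diffeomorphisms $H$ and $h$ with $F\circ H=h\circ\Phi$.

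The main obstacle is the step of matching the boundary identification $\varphi$ simultaneously with the fibrations $F_1,F_2$ and with the two local sections $s_1,s_2$: one must check that the isomorphism of $T^2$-bundles guaranteed by Corollary~\ref{Cor:DualMonodromies} can be upgraded to a diffeomorphism respecting both the Lefschetz fibration structure near the boundary and the marked section points, without introducing any unwanted twisting. Once this compatibility is established the rest of the argument is essentially bookkeeping plus the Moishezon--Livn\'e classification.
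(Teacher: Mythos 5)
Your proposal follows essentially the same route as the paper: glue $F_1$ and $F_2$ along the boundary torus bundles using Proposition~\ref{Prop:DualMonodromies} and Corollary~\ref{Cor:DualMonodromies}, note that the total number of critical points is $(p+q+r)+(p'+q'+r')=24$ for every extended strange duality pair, and conclude by the classification of genus-one Lefschetz fibrations over $S^2$ --- the paper invokes Kas and Moishezon (via Matsumoto \cite{Ma}), you invoke Moishezon--Livn\'e, which is the same ingredient, so the extra identification of the total space with $E(2)$ is just a restatement. The one genuine slip is your justification of relative minimality: the existence of a sphere section does \emph{not} rule out null-homotopic vanishing cycles, since a section can pass through the torus component of a singular fiber that also contains a $(-1)$-sphere bubble. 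The hypothesis is nevertheless easy to verify directly from the construction: by Section~\ref{Section:MilnorLattice} the singular fibers of $g|_Y$ are cycles of $(-2)$-spheres (Kodaira type $I_n$ fibers), so after the generic perturbation each vanishing cycle is an essential simple closed curve on $T^2$ and no fiber contains a $(-1)$-sphere. With that repair your argument is complete; the section of Proposition~\ref{Prop:section}, which the paper does not use in this corollary, is then optional bookkeeping rather than a needed hypothesis, and matching the two local sections across the gluing map $\varphi$ can simply be dropped from the proof.
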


\begin{proof}
Notice that for any of the $10$ pairs of dual triples $(p, q, r)$ and $(p', q', r')$, it holds that $$p+q+r+p'+q'+r'=24.$$ 
Hence, the Lefschetz fibration $F$ has exactly $24$ critical points by Theorem~\ref{precise statement}. 
As was mentioned in Remark~\ref{E2}, also $H$ has just $24$ critical points as a genus-one Lefschetz fibration. 
Then, by Theorem~\ref{Kas-Moishezon}, both $F$ and $H$ 
are isomorphic to $f_2\colon E(2)\to S^2$ as a Lefschetz fibration. 
\end{proof}
\begin{remark} 
Nakamura \cite{Nakamura2, Nakamura3} showed 
that a singular Hirzebruch-Inoue surface $S_s(\mathfrak{m}, V)$ admits 
a deformation to K3 surfaces if and only if 
the two singularities $\infty $ and $\infty ^{-}$ form an extended strange duality pair.    
This condition is also equivalent to requiring that 
both $\infty $ and $\infty ^{-}$ can be embedded in $\C ^3$ 
as singularities of type $T_{p,q,r}$ and $T_{p',q',r'}$, respectively, for some triples $(p,q,r)$ and $(p',q',r')$.   
Hence, our result can be interpreted as a topological analogue of this phenomenon, equipped with elliptic fibrations. 
\end {remark}
\begin{example} As $T_{2,3,7}$ is self-dual pair in the extended strange duality, 
a K3 surface is decomposed into 
a pair of the same Milnor fiber $X_{2,3,7}$. 
Among 10 extended strange duality pairs,  
$T_{2,3,7}$ - $T_{2,3,7}$ gives rise to a particularly nice 
decomposition as explained below. 

The intersection form of  $X_{2,3,7}$ is isomorphic to 
$\mathcal{T}(2,3,7)\oplus \langle [T^{2}] \rangle$ 
and from Proposition \ref{Prop:section}
the Lefschetz fibration $X_{2,3,7} \to D^2$ 
admits a section $s_{2,3,7}$ 
which does not intersect the cycles of $\mathcal{T}(2,3,7)$ 
but does once the regular fiber $[T^{2}]$. 
Of course, this applies to 
each of two copies. The boundary is a $T^{2}$-bundle with monodromy 
conjugate to
$
\begin{pmatrix}   
2&1\\1&1
\end{pmatrix}
$.  
Since ``this monodromy $-$ the identity'' is unimodular,  
we easily see the following claim. 
This holds only for this monodromy. 
\begin{assertion} 
Any two sections to the $T^{2}$-bundle 
over the circle with the above monodromy 
are homotopic as section to each other.  
\end{assertion}
\par 
This assertion enables us to obtain a section $S$ to the Lefschetz fibration 
of  a K3 surface to $S^{2}$ by gluing two copies of the section 
$s_{2,3,7} : D^{2} \to X_{2,3,7}$ on their boundaries. 

Now we see that from each copy of $X_{2,3,7}$, we have 
$\mathcal{T}(2,3,7)\oplus \langle [T^{2}] \rangle$, while in a K3 surface, 
two $[T^{2}]$'s coincide, but instead we have a new 
2-cycle $S$ which does not intersect two copies of  $\mathcal{T}(2,3,7)$ 
but does the regular fiber $[T^{2}]$ once,  
and thus $[T^{2}]$ and $S$ form an intersection form $H$. 
Hence the intersection form of the second integral homology 
of a K3 surface is isomorphic to $2\mathcal{T}(2,3,7)\oplus H$.  
It is also easy to check the following, 
see \S \ref{Subsec:Intersection}. 
\begin{assertion}  $1)$  $\mathcal{T}(2,3,7)=E_{10}$ is isomorphic to 
$E_{8}\oplus H$. (See \ref{Subsec:Intersection}). 
\\ 
$2)$  $E_{k}$ is unimodular if and only if $k=8$ or $k=10$.   
\\
$3)$  $\mathcal{T}(p,q,r)$ for $1/p+1/q+1/r \leqq 1$ is unimodular 
if and only if $(p,q,r)=(2,3,7)$.  
\end{assertion}
\begin{proposition}  
For the other extended strange duality pair than $T_{2,3,7}$ - $T_{2,3,7}$, 
there exists a section to the Lefschetz fibration 
constructed above, but it always intersects with some cycles 
of $\mathcal{T}(p,q,r)$ and $\mathcal{T}(p',q',r')$.   
\end{proposition}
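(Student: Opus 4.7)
The plan is to argue by contradiction, mimicking the computation of the second homology of a K3 surface as $2\mathcal{T}(2,3,7)\oplus H$ carried out for the self-dual $(2,3,7)$ case just above this proposition. Suppose a section $S$ of the glued Lefschetz fibration $F\colon \hat X\to S^2$ exists that does not intersect any sphere in $\mathcal S'_1\cup \mathcal S'_2$. Then $[S]$ is orthogonal to every class of $\mathcal T(p,q,r)\oplus \mathcal T(p',q',r')$, and satisfies $[S]^2=-2$, $[S]\cdot [T^2]=1$, $[T^2]^2=0$ (the section self-intersection follows from the adjunction formula for a section of a K3 elliptic fibration). Thus the classes of $\mathcal S'_1\cup \mathcal S'_2\cup\{[S]\}$ span a sublattice of $H_2(\hat X;\Z)$ whose intersection form is the block sum
\[
\mathcal T(p,q,r)\oplus \mathcal T(p',q',r')\oplus H,
\]
of rank $(p+q+r-2)+(p'+q'+r'-2)+2=22$ using $p+q+r+p'+q'+r'=24$ from Theorem~\ref{Thm:MainCorollary}, and of signature $(3,19)$ matching $H_2(K3;\Z)\cong \Lambda_{K3}$.

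The next step is to upgrade this full-rank inclusion to an actual identification with $\Lambda_{K3}$, in direct analogy with the $(2,3,7)$ case. Once this is justified, since $\Lambda_{K3}$ is unimodular, the discriminant identity
\[
|\mathrm{disc}(\mathcal T(p,q,r)\oplus \mathcal T(p',q',r')\oplus H)|=|\mathrm{disc}\,\mathcal T(p,q,r)|\cdot|\mathrm{disc}\,\mathcal T(p',q',r')|=1
\]
forces each factor $\mathcal T(p_i,q_i,r_i)$ to be unimodular. By the characterization of unimodular T-lattices stated in the Assertion immediately preceding this proposition (third part), this occurs only when $(p,q,r)=(p',q',r')=(2,3,7)$, contradicting the hypothesis. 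Hence $S$ must intersect at least one sphere in $\mathcal S'_1\cup \mathcal S'_2$, as desired.

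The main obstacle is precisely the step of identifying the above sublattice with the whole of $H_2(\hat X;\Z)$ integrally, rather than merely rationally. The Mayer-Vietoris sequence for the decomposition $\hat X=X_1\cup_{\partial}X_2$ has a surjection onto $H_1(\partial X_1)$, whose torsion part $\Z^2/(A_{p,q,r}-I)\Z^2$ has order $|\det(A_{p,q,r}-I)|$, and a direct computation using the monodromy formulas of \S\ref{Section:MilnorLattice} shows that this coincides with $|\mathrm{disc}\,\mathcal T(p,q,r)|$. For non-$(2,3,7)$ pairs this torsion is nontrivial and a priori produces integral classes in $H_2(\hat X;\Z)$ lying outside the $\Z$-span of our cycles. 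To close this gap I plan to use the homological relations $\Sigma_{m,0}+\cdots+\Sigma_{m,p-1}=[T^2]$ of \S\ref{Section:MilnorLattice} together with the analogous relations inside $X_2$ to represent any torsion lift geometrically within the integral span of $\mathcal S'_1\cup \mathcal S'_2\cup \{[S]\}$; alternatively, one can replace the span by its primitive closure in $\Lambda_{K3}$ and verify that the closure is still forced to be unimodular by the same discriminant comparison, so the final contradiction via the unimodularity assertion persists.
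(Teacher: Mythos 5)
Your outline is the same as the paper's: argue by contradiction, use Mayer--Vietoris for $\hat X=X_1\cup_\partial X_2$ to compare the span of $\mathcal S_1'\cup\mathcal S_2'\cup\{[S]\}$ with $H_2(\hat X;\Z)$, invoke unimodularity of the K3 lattice, and conclude via the Assertion that $\mathcal T(p,q,r)$ is unimodular only for $(2,3,7)$. The difference is that the paper's proof rests on the claim (attributed to the Mayer--Vietoris argument) that under the hypothesis on $S$ the K3 lattice \emph{is} $\mathcal T(p,q,r)\oplus\mathcal T(p',q',r')\oplus H$, i.e.\ that these classes generate $H_2(\hat X;\Z)$ integrally; you correctly single this out as the crux, but neither of your proposed repairs closes it, so the proof as written is incomplete exactly at the decisive step.

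Concretely: your fallback of passing to the primitive closure and ``comparing discriminants'' cannot yield a contradiction. The saturation of a full-rank sublattice of $H_2(\hat X;\Z)$ is $H_2(\hat X;\Z)$ itself, and its unimodularity only gives $|\mathrm{disc}\,\mathcal T(p,q,r)|\cdot|\mathrm{disc}\,\mathcal T(p',q',r')|=[\,H_2:\mathrm{span}\,]^2$, which does not force either factor to be unimodular. Worse, for an extended strange duality pair the boundary monodromies are conjugate-inverse (Proposition \ref{Prop:DualMonodromies}), so $|\mathrm{disc}\,\mathcal T(p,q,r)|=|\det(A_{p,q,r}-I)|=|\det(A_{p',q',r'}-I)|=|\mathrm{disc}\,\mathcal T(p',q',r')|$; the product is therefore automatically a perfect square, and discriminant (or signature) counting alone is consistent with a finite-index embedding of $\mathcal T(p,q,r)\oplus\mathcal T(p',q',r')\oplus H$ into the unimodular K3 lattice --- indeed the discriminant forms of dual $\mathcal T$'s are opposite, so an even unimodular overlattice of index $|\mathrm{disc}|$ genuinely exists. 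Your first repair, representing ``torsion lifts'' inside the integral span, contradicts your own Mayer--Vietoris computation: the index of the span is precisely the order of $\mathrm{coker}(A_{p,q,r}-I)$, so when that group is nontrivial such classes cannot lie in the span by definition. What must be supplied is the paper's assertion that the hypothesis ``$S$ disjoint from all cycles of $\mathcal T(p,q,r)$ and $\mathcal T(p',q',r')$'' forces the identification of the whole lattice with $\mathcal T(p,q,r)\oplus\mathcal T(p',q',r')\oplus H$, and this you leave unproved. (A minor point: $[S]^2=-2$ via adjunction is not justified for a merely smooth section, but it is also unnecessary, since $\langle[T^2],[S]\rangle$ is unimodular once $[S]\cdot[T^2]=1$ and $[T^2]^2=0$.)
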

\begin{proof}  
Otherwise, we have a section $S$ which does not intersect with 
any of the cycles of $\mathcal{T}(p,q,r)$ and $\mathcal{T}(p',q',r')$.  
Then it is easily seen form the Meyer-Vietoris argument 
that the lattice of a K3 surface is isomorphic to 
 $\mathcal{T}(p,q,r)\oplus\mathcal{T}(p',q',r')\oplus H$
where $H$ is the intersection form of $\langle T^{2}, S\rangle$. 
Then  $\mathcal{T}(p,q,r)$ and $\mathcal{T}(p',q',r')$ 
are unimodular.  This is a contradiction.   
\end{proof}
Only in the case of $T_{2,3,7}$ - $T_{2,3,7}$,  
we have such a section and 
recover the K3 lattice $2E_{8}\oplus 3H$, while two $H$'s are from 
 $\mathcal{T}(2,3,7)$ and the third $H$ is for the section $S$ and the fiber.  
Among six generators of three $H$'s, only $S$ seems to be able to be 
represented by an embedded two-sphere. 
\end{example}

\subsection{Inose fibration}\label{subsec:Inose}

There exists a loop $L$ on the base space of 
a generic elliptic fibration $\Phi$ of a K3 surface 
such that $\Phi^{-1}(L)$ is a Sol-manifold. 
Indeed, we have shown that for any 
extended strange duality pair of cusp singularities 
there exists a disk $D\subset \CP^1$ such that 
$\Phi^{-1}(D)$ and its exterior are 
diffeomorphic to their Milnor fibers. 
Thus we may take $L=\partial D$. 
However, we have yet to obtain specific examples of such loops on individual elliptic K3 surfaces.  
Further, there is no such loop in general 
for a non-generic elliptic K3 surface. 
\par
For example, consider the product $C_1\times C_2(\subset \CP^2\times \CP^2)$ of elliptic curves 
\[
C_j: {y_j}^2={x_j}^3+a_j{x_j}+b_j
\quad (4{a_j}^3+27{b_j}^2\neq0,\quad j=1,2)
\]
having the involution $\iota:((x_1,y_1),(x_2,y_2))\mapsto ((x_1,-y_1),(x_2,-y_2))$ with 
16 fixed points $(N_{1,k}, N_{2,l})$ ($k,l \in\{1,2,3,\infty\}$). 
Here $\{N_{j,1}, N_{j,2}, N_{j,3}\}=C_j\cap \{y_j=0\}$ presents 
the three distinct solutions of the cubic equation ${x_j}^3+a_j{x_j}+b_j=0$, 
and $N_{j,\infty}\in C_j$ is the point at infinity ($j=1,2$). 
Blowing up the corresponding 16 nodes of the quotient $(C_1\times C_2)/\iota$, 
we obtain a K3 surface $\textrm{Km}(C_1\times C_2)$ which is called 
the Kummer surface of the abelian surface $C_1\times C_2$ (with respect to $\iota$). 
Then the projection of $\textrm{Km}(C_1\times C_2)$ to the $x_j$-axis is 
an elliptic fibration with four singular fibers of type $\textrm{I}_0^*$ in Kodaira's classification. 
Since the monodromy of the type $\textrm{I}_0^*$ singular fiber is 
$\displaystyle \begin{pmatrix} -1&0 \\ 0&-1 \end{pmatrix}$, we see that 
there is no loop with hyperbolic monodromy. 
On the same K3 surface, Inose \cite{Inose} found that 
\[
\Phi_{\textrm{Inose}}=\frac{y_2}{y_1}\left(=\frac{-y_2}{-y_1}\right): \textrm{Km}(C_1\times C_2)\to \CP^1
\] 
defines another elliptic fibration, which we call the Inose fibration. 
Hereafter we fix the parameters 
as $a_1=-3$, $b_1=0$, $a_2=3$, $b_2=2$. Let $\Sigma_P$ denote 
the fiber $\Phi_{\textrm{Inose}}^{-1}(P)$ at each point $P\in \CP^1$. Then we see that  
\[
\Sigma_P=
\left\{ ~ \frac{{x_2}^3+3{x_2}+2}{{x_1}^3-3{x_1}}=P^2,\quad 
\frac{y_2}{y_1}=P ~ \right\}\subset \textrm{Km}(C_1\times C_2)
\] 
is regular unless $P=0$, $P=\infty$ or 
$P$ is one of the 8 roots $(-4)^{1/8}$. In the case where $P=0$, 
according to the three interpretations $\displaystyle P=\frac{*}{\infty}$, 
$\displaystyle \frac{0}{*}$, 
$\displaystyle \frac{0}{\infty}$ of $P=0$, we have $1+3+3$ irreducible components of 
the singular fiber $\Sigma_0$, namely, the line $\{N_{1,\infty}\}\times \CP^1$, 
the three lines $\CP^1\times \{N_{2,1},N_{2,2},N_{2,3}\}$, and 
the three blown-up lines which are originally the cross points $\{N_{2,1},N_{2,2},N_{2,3}\}$ of the preceding lines in $(C_1\times C_2)/\iota$. 
This implies that $\Sigma_0$ is of type $\mathrm{IV}^*$ in Kodaira's classification. 
Similarly, $\Sigma_\infty$ is of type $\mathrm{IV}^*$. 
It is easy to see that the other singular fibers are of type $\mathrm{I}_1$, i.e., of Lefschetz-type. 
Looking at the monodromy around these singular fibers in detail, 
we can obtain, for example, the following result. 
The detail of its proof and further investigations will be discussed elsewhere. 
\begin{proposition} 
There exists a star-convex domain $(D,0)\subset \C=\CP^1\setminus\{\infty\}$ such that $\Phi_{\textrm{Inose}}^{-1}(\partial D)$ 
is diffeomorphic to the link of each of the following cusp singularities; 
two self dual ones $T_{2,3,7}$, $T_{2,5,5}$; and both in the dual pair $\{T_{2,4,5}, T_{2,3,8}\}$.  
\end{proposition}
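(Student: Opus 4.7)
The plan is to reduce the statement to identifying $SL(2,\Z)$-conjugacy classes of monodromy matrices of $T^2$-bundles over $S^1$. By Section~1, the link of a cusp singularity $T_{p,q,r}$ is the $T^2$-bundle over $S^1$ whose monodromy is conjugate to $A_{p,q,r}$, and two oriented $T^2$-bundles over $S^1$ are isomorphic iff their monodromies are conjugate in $SL(2,\Z)$. Hence, for each target triple, it suffices to exhibit a star-convex domain $(D,0)\subset\C$ whose boundary avoids the critical values of $\Phi_{\textrm{Inose}}$ and along which the monodromy is conjugate to the corresponding $A_{p,q,r}$. The target traces are $\operatorname{tr}A_{2,3,7}=3$, $\operatorname{tr}A_{2,5,5}=7$, and $\operatorname{tr}A_{2,4,5}=\operatorname{tr}A_{2,3,8}=4$; moreover, by Proposition~\ref{Prop:DualMonodromies}, $A_{2,4,5}$ and $A_{2,3,8}$ are conjugate to inverses of each other, so a single $D$ serves for both members of this dual pair, with the orientation of $\partial D$ choosing which.

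First I would determine the local monodromies of $\Phi_{\textrm{Inose}}$. The fibers $\Sigma_0$ and $\Sigma_\infty$ of type $\mathrm{IV}^*$ contribute elliptic monodromies $M_0,M_\infty$ of order $6$, conjugate to $\begin{pmatrix}-1&-1\\1&0\end{pmatrix}$. Each of the eight Lefschetz singular fibers at $P_k=2^{1/4}e^{i(2k+1)\pi/8}$ ($k=0,\dots,7$) contributes a Dehn twist $T_k$. Two global constraints cut the data down to a small finite computation: the product relation $M_0\cdot T_0T_1\cdots T_7\cdot M_\infty=I$ in $SL(2,\Z)$ from the loop at infinity in the base, and the $\Z/8$-symmetry $P\mapsto e^{i\pi/4}P$ which lifts to an automorphism of $\mathrm{Km}(C_1\times C_2)$ (from the compatible scaling of $y_1,y_2$) and identifies $T_{k+1}$ with a conjugate of $T_k$ by a power of $M_0$. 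An explicit local computation of one vanishing cycle then determines the tuple $(M_0,T_0,\dots,T_7,M_\infty)$ up to simultaneous conjugation.

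Next, a star-convex domain $(D,0)$ is determined by a positive radial function $r\colon S^1\to(0,\infty)$, and $\partial D$ encloses exactly those $P_k$ satisfying $r(\arg P_k)>2^{1/4}$. Since the eight arguments $(2k+1)\pi/8$ are distinct, any subset $S\subset\{P_0,\dots,P_7\}$ is realizable by a smooth $r$. Traversing counterclockwise, the monodromy of $\partial D$ is $M_0\cdot\prod_{P_k\in S}T_k$, with factors in angular order. I would enumerate the $\Z/8$-orbits of subsets $S$, compute the traces, isolate those yielding $3$, $7$, or $4$, and then verify $SL(2,\Z)$-conjugacy to the targeted $A_{p,q,r}$ via the continued-fraction normal form for hyperbolic elements. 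Matching the conjugacy class of the monodromy to each $A_{p,q,r}$ produces the required $D$'s.

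The main obstacle will be the explicit identification of a single Dehn twist $T_k$, equivalently of its vanishing cycle in $H_1(\Sigma_P;\Z)$ for a nearby regular fiber. The difficulty is that $\Phi_{\textrm{Inose}}$ is given not by a Weierstrass-type local normal form but by the rational function $y_2/y_1$ on the Kummer surface, with critical values $P_k$ arising from the combined degeneration of the relation $(x_2^3+3x_2+2)=P^2(x_1^3-3x_1)$ and the projection to $C_2$. A feasible route is to pull back to $C_1\times C_2$, exploit the real structure coming from the real coefficients $(a_j,b_j)$, and follow a real homology cycle on a nearby regular fiber as $P$ approaches one real critical value, thereby identifying the vanishing cycle. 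Once one $T_k$ is known, the symmetry and product relations propagate it to all eight, and the remainder of the argument is a finite algebraic check in $SL(2,\Z)$.
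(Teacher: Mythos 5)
Your overall strategy coincides with the paper's: realize each link as the mapping torus of the boundary monodromy of a star-convex $D$, write that monodromy as the $\mathrm{IV}^*$ monodromy at $0$ composed with the Dehn twists of the enclosed $\mathrm{I}_1$ fibers, and identify the resulting class in $\mathit{SL}(2;\Z)$ with $A_{p,q,r}$. The gap is in the step that actually carries the content, namely pinning down the eight Dehn twists $T_k$ in a common basis. You defer this ("the main obstacle") and propose to recover all eight from one twist via a $\Z/8$-symmetry $P\mapsto e^{\frac{\pi i}{4}}P$ lifted to $\mathrm{Km}(C_1\times C_2)$ by "compatible scaling of $y_1,y_2$". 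With the paper's parameters $a_1=-3,\,b_1=0,\,a_2=3,\,b_2=2$, the curve $C_1$ has $j=1728$ and an order-$4$ automorphism $(x_1,y_1)\mapsto(-x_1,iy_1)$, but $C_2$ has generic $j$-invariant and only the hyperelliptic involution; hence only the rotation $P\mapsto -iP$ (order $4$) lifts, not the order-$8$ rotation, and the propagation $T_k\rightsquigarrow T_{k+1}$ by conjugation with a power of $M_0$ is unsupported. The paper's own computation in fact yields a pattern you could not reach this way: both roots of $(-4)^{1/8}$ in a given quadrant give the \emph{same} twist (parallel vanishing cycles), and the per-quadrant twists come out as $\tau_\alpha,\tau_\gamma,\tau_\beta,\tau_\alpha$.

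Where the paper succeeds concretely is by viewing $\Sigma_P$ as a branched cover over the $x_1$-line and tracking the six branch points, the solutions of $P^2(x_1^3-3x_1)=2\pm2\sqrt{-1}$: near the base point $P=\e$ these form two parallel triangles whose edges are the curves $\alpha,\beta,\gamma$ generating $H_1(T^2)$, an economic loop around a root in the $l$-th quadrant gives the twist along the corresponding edge, and the full $2\pi$ rotation (which rotates the six points by $-2\pi/3$) gives $(\tau_\alpha\tau_\beta)^4$, so the boundary monodromy is $(\tau_\alpha\tau_\beta)^4\tau_\alpha^{c_4}\tau_\beta^{c_3}\tau_\gamma^{c_2}\tau_\alpha^{c_1}$; the four stated quadrant-count vectors then produce the four conjugacy classes. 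Your proposal never reaches an analogue of this identification, so as written it does not close. (Two smaller points: the $\mathrm{IV}^*$ monodromy has order $3$, as your own representative $\begin{pmatrix}-1&-1\\1&0\end{pmatrix}$ shows, not $6$; and while your product relation $M_0T_0\cdots T_7M_\infty=I$ and trace bookkeeping are correct and would be useful checks, trace plus a continued-fraction normal form is indeed needed, as trace alone does not separate $A_{2,4,5}$ from $A_{2,3,8}$.)
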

\begin{proof} 
Suppose that the boundary of a star-convex domain $(D,0)\subset \C$ is 
a smooth curve that avoids the 8 roots $(-4)^{1/8}$. 
Note that each quadrant of $\C=\R^2$ has two of the 8 roots. 
Let $c_l$ denote the number of the roots in the $l$-th quadrant 
which belongs to $D$ ($l=1,2,3,4$). 
We will show the following implications. 
\begin{enumerate}
\item $(c_1,c_2,c_3,c_4)=(0,0,2,2)\Rightarrow \Phi_{\textrm{Inose}}^{-1}(\partial D)$ is diffeomorphic 
to $\partial X_{2,3,7}$. 
\item $(c_1,c_2,c_3,c_4)=(0,2,0,2)\Rightarrow \Phi_{\textrm{Inose}}^{-1}(\partial D)$ is diffeomorphic 
to $\partial X_{2,5,5}$.
\item $(c_1,c_2,c_3,c_4)=(0,1,0,2)\Rightarrow \Phi_{\textrm{Inose}}^{-1}(\partial D)$ is diffeomorphic 
to $\partial X_{2,4,5}$.
\item $(c_1,c_2,c_3,c_4)=(0,2,1,2)\Rightarrow \Phi_{\textrm{Inose}}^{-1}(\partial D)$ is diffeomorphic 
to $\partial X_{2,3,8}$.
\end{enumerate}
To show them, we 
move the point $P$ along a loop $L$ on $\C$, and 
watch the movement of the six solutions of 
$P^2({x_1}^3-3{x_1})=2\pm2\sqrt{-1}$ which are 
the critical values of the natural projection of $\Sigma_P$ to the $x_1$-axis. 
We fix the base point of $L$ as a small positive real number $\e$. 
Then we may regard approximately the initial position of the six points 
as $((2\pm 2\sqrt{-1})/(\e^2))^{1/3}$. 
Among them, the three points $((2+ 2\sqrt{-1})/(\e^2))^{1/3}$ form a 
counterclockwise triangle 
whose edges are the images of essential simple closed curves 
$\alpha$, $\beta$, $\gamma\subset \Sigma_\e$ in this order, 
where the image of $\alpha$ is the edge starting from the first quadrant. 
Each pair of $\alpha$, $\beta$, $\gamma$ meets at a single point, 
and thus represents a generator of $H_1(T^2)$. 
The other three points $((2-2\sqrt{-1})/(\e^2))^{1/3}$ provide 
three curves $\alpha'$, $\beta'$, $\gamma'\subset \Sigma_\e$
which are respectively disjoint from 
(i.e., parallel to) $\alpha$, $\beta$, $\gamma$. 
We take $L$ as an economic counterclockwise loop around 
a first quadrant element of $(-4)^{1/8}$, where an economic loop is 
going straight there; turning small; 
and coming straight back. 
Then the monodromy is (isotopic to) 
the right-handed Dehn-twist $\tau_\alpha$ along $\alpha$. 
Next we take $L$ so that it starts with $k\pi/2$ rotation with radius $\e$; goes around a $(k+1)$-th quadrant element of $(-4)^{1/8}$ economically; 
and ends with $-k\pi/2$ rotation with radius $\e$ ($k=1,2,3$). 
Then the monodromy is $\tau_\gamma$ ($k=1$), 
$\tau_\beta$ ($k=2$), or $\tau_\alpha$ ($k=3$). 
If we take $L$ as the $2\pi$ rotation with radius $\e$, 
the six critical points rotate by $-2\pi/3$ around $0$, 
and therefore the monodromy can be written as $(\tau_a\tau_\beta)^4$. 
Now we see that the monodromy along $\partial D$ is 
the composition 
$(\tau_\alpha\tau_\beta)^4
\tau_\alpha^{c_4}\tau_\beta^{c_3}
\tau_\gamma^{c_2}\tau_\alpha^{c_1}$. 
We have the above implications by straightforward 
calculation, which we omit here.
\end{proof}
We should notice that the topology of the elliptic fibration near the type $\textrm{IV}^*$ singularity 
is well-known. Particularly, from the result of Naruki \cite{Naruki}, we can see that 
the fibration near the singular fiber can be deformed into one isomorphic to 
our Lagrangian fibration in \S 2 if we put $(p,q,r)=(2,3,3)$ formally. 
However, we have not yet explored the topology of 
the pieces of the above decompositions of Inose fibration,  
whether they provide the Milnor fibers or not. 

\section{\large Foliated Lefschetz fibration} 
\label{Section;Foliated LF}
\subsection{Lawson type foliations}

In the early 1950s, Reeb constructed the first example of a codimension-one foliation on the $3$-sphere, now known as the {\em Reeb foliation}. The construction was carried out by gluing together two copies of {\em Reeb components} (foliated solid tori) along their boundaries according to the genus-one Heegaard splitting of $S^3$. Since then, the construction of explicit examples of codimension-one foliations on odd-dimensional spheres has become a central problem in foliation theory. 
About twenty years later, Lawson \cite{Law} constructed the first example of codimension-one foliation on the $5$-sphere using the Milnor fibration associated with the $\tilde{E}_6$ singularity. The key point of his construction was that the foliation on the Reeb component can be pulled back to a tubular neighborhood of the link of the $\tilde{E}_6$ singularity via a submersion, since this link fibers over $S^1$. 
Since simple elliptic and cusp singularities also have the same feature (see Theorem~\ref{chara}), one can obtain a codimension-one foliation on $S^5$ using any of these singularities instead of the $\tilde{E}_6$ singularity. 
We call these the {\em Lawson type foliations} on $S^5$. 
Based on Lawson's method, Tamura \cite{Tamura} succeeded in giving an example on every odd-dimensional sphere by constructing some explicit open book decomposition of the sphere.  
Soon after that, Thurston \cite{Thurston} proved a general existence theorem for codimension-one foliations (Thurston's $h$-principle), and as a result, the construction of explicit codimension-one foliations lost its original motivation. 
However, in recent years, as indicated by the works of the third and fourth authors \cite{Mori, Mi1, Mori2, Mi2}, the significance of the Reeb foliation and the Lawson type foliation has been revisited from the viewpoint of geometric structures such as contact and symplectic structures.


In this section, we show that a Lawson type foliation admits 
a {\em foliated Lefschetz fibration} over the Reeb foliation on $S^{3}$, namely, 
there exists a leafwise Lefschetz fibration between these foliations that is transverse to the Reeb foliation. 
Consequently, all Lawson type foliations can be regarded as the pullbacks of the Reeb foliation. 
In addition, we obtain an alternative proof of the third author's result 
that a Lawson type foliation admits a leafwise symplectic structure. 

\subsubsection{3-dimensional Reeb foliation and Reeb component}
First let us recall the Reeb foliation on $S^3$. 
We regard $S^3$ as the unit hypersphere in $\C ^2$, namely, 
$$S^{3}=\{(w_{1}, w_{2}) \in \C^{2}; |w_{1}|^{2}+|w_{2}|^{2}=1\}.$$
Then we split it into two solid tori $R_{1}=\{|w_{1}|^{2}\geqq 1/2\}$ and $R_{2}=\{|w_{2}|^{2}\geqq 1/2\}$. 
For each $j$ ($j=1,2$), the diffeomorphism between $R_j$ and $D^2\times S^1$ is given as follows:
$$\psi _1\colon R_1\to D^2\times S^1; (w_1, w_2)\mapsto (w_2/\sqrt{2}, \arg w_1), $$
$$\psi _2\colon R_2\to D^2\times S^1; (w_1, w_2)\mapsto (w_1/\sqrt{2}, \arg w_2).$$
Now we explain that the solid torus $D^2\times S^1$ admits a codimension-one foliation whose boundary torus is the only compact leaf. 
Consider the foliation on $D^2\times \R$ such that the boundary $S^1\times \R$ is one leaf 
and any other leaf is given as the graph of the function $$\Phi_c(u, v)=\frac{u^2+v^2}{1-u^2-v^2}+c,$$ 
where $(u,v)\in \mathrm{Int} {D^2}$ and $c$ is a real constant. 
Then, this foliation is translation invariant, and in particular, invariant under 
the action of $\Z$ given by $$n\cdot (u, v, t)=(u, v, t+n) \; (n\in \Z). $$
Hence, it induces a codimension-one foliation $\mathcal{F}$ on $D^2\times S^1=(D^2\times \R)/\Z$. 
The solid torus $D^2\times S^1$ equipped with the foliation $\mathcal {F}$ is called the {\em Reeb component}. 
Pulling this foliation back to each solid torus $R_j$ via the diffeomorphism $\psi _j$ ($j=1, 2$), and then gluing them together along their boundary tori, we obtain a codimension-one foliation $\mathcal{F}_R$ on $S^3$ whose only compact leaf is the torus 
$$\partial R_1=\partial R_2=\{|w_1|=|w_2|=1/\sqrt{2}\}. $$
This foliation is called the {\em Reeb foliation}.

\begin{figure}[h]
\centering
\includegraphics[width=70mm]{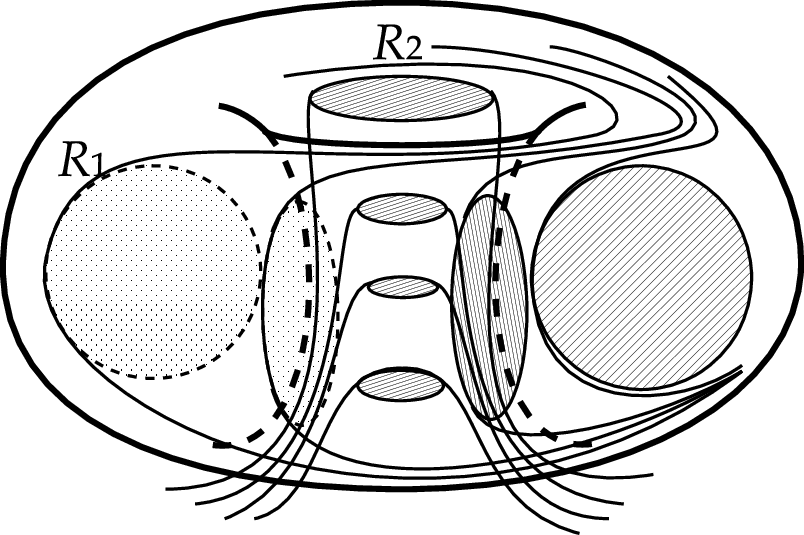}
\caption{The Reeb foliation}
\label{reeb}
\end{figure}

\subsubsection{Lawson type foliations}
 
We review the Lawson type foliations associated with $T_{p,q,r}$-singularities with $1/p+1/q+1/r\leqq 1$.  
As seen in \S~\ref{sing}, the link $L$ of such a singularity is a $T^{2}$-bundle over the circle $S^{1}$. 
We denote the $T^2$-bundle by $\pi \colon L\to S^1$. 
Let $f$ be the defining polynomial of the $T_{p,q,r}$-singularity as in \S~\ref{Milnor fiber}. 
Since $0$ is a regular value of the function $f$ restricted to $S^5$, 
$f^{-1}(D^2_{\rho})\cap S^5$ is a product-type tubular neighborhood of $L$ in $S^5$ for a sufficiently small positive number $\rho$, where $D^2_{\rho}=\{ w\in \C; |w|\leq \rho\}$. 
Thus there exists a diffeomorphism $$\phi \colon f^{-1}(D^2_{\rho})\cap S^5\to L\times D^2_{\rho}$$ 
such that the composition $\mathrm{pr}_2\circ \phi $ coincides with $f$, where $\mathrm{pr}_2$ denotes the projection to the second factor. 
Then we set $U_{\delta }=\phi ^{-1}(L\times D^2_{\delta }) (=f^{-1}(D^2_{\delta })\cap S^5)$ for any positive number $\delta $ with $\delta \leq \rho $. 

Milnor's fibration theorem (Theorem~\ref{Milnor}) says that the function $\arg f$ yields an open book decomposition of $S^5$ whose binding coincides with $L$. Each page $(\arg f)^{-1}(\theta )$ ($\theta \in S^1$) transversely intersects with $\partial U_{\delta }$ for any $\delta $ with $0<\delta \leq \rho $. Hence, it gives a codimension-one foliation $\mathcal{F}''_{p,q,r}$ on $S^5\setminus U_{\rho}$ that is transverse to the boundary $\partial U_{\rho}$. 
Now we consider the $1$-dimensional foliation $\mathcal{L}$ on the annulus $$D^2_{\rho }\setminus D^2_{\rho/2}=\{w\in \C; \rho/2<|w|\leq \rho \}$$ that is radial near $\partial D^2_{\rho }$ and wraps around $\partial D^2_{\rho/2}$. 
Pulling it back by $f$, we obtain a codimension-one foliation $f^{\ast }\mathcal{L}$ on $U_{\rho }\setminus U_{\rho/2}$ that smoothly matches with $\mathcal{F}''_{p,q,r}$. Then these two foliations form a codimension-one foliation $\mathcal{F}'_{p,q,r}$ on $S^5\setminus U_{\rho/2}$ whose leaves wrap around the boundary $\partial U_{\rho/2}$.

Finally, we pull back the Reeb component by the $T^2$-bundle map 
$$ (\pi \times \mathrm{id})\circ \phi  \colon U_{\rho/2}\to L\times D^2_{\rho/2}\to S^1\times D^2_{\rho/2},$$ 
where $S^1\times D^2_{\rho/2}$ is identified with $D^2\times S^1$ via the natural diffeomorphism. 
Then, $\mathcal{F}'_{p,q,r}$ and $((\pi \times \mathrm{id})\circ \phi )^{\ast}(\mathcal{F})$ smoothly matches along $\partial U_{\rho/2}$ to form a codimension-one foliation $\mathcal{F}_{p,q,r}$ on $S^5$, whose only compact leaf is $\partial U_{\rho/2}\cong L\times S^1$. This foliation is called the {\em Lawson type foliation} associated with a $T_{p,q,r}$-singularity.\\

\vspace{1pt}

\begin{remark}~\label{turbulize}
Let $M$ be a manifold with boundary. 
Any codimension-one foliation on $M$ whose leaves are all transverse to the boundary $\partial M$ 
can be modified only on a collar neighborhood of $\partial M$ to be a foliation wrapping around $\partial M$. 
Such an operation is called a {\em turbulization}. 
The Lawson type foliation $\mathcal{F}_{p,q,r}$ is obtained from the Milnor fibration associated with a $T_{p,q,r}$-singularity 
by pulling back the Reeb component to a tubular neighborhood of the link $L$ and turbulizing all the Milnor fibers around it. 
In fact, the Reeb component can be also regarded as the resultant foliation of a turbulization of the trivial codimension-one foliation on $D^2\times S^1$. From this point of view, the Reeb foliation is the resultant foliation of applying Lawson's method to the trivial open book decomposition on $S^3$ given by the function $\arg w_2$. 
\end{remark}

\subsection{Foliated Lefschetz fibrations}\label{Subsec:FolLF}

Now we introduce the following notion, which was suggested by Presas. 

\begin{definition}[Presas \cite{Presas}] 
\label{Def:FolLF}
Let $X$ and $Y$ be a smooth 5-manifold and 3-manifold, 
$\F_{X}$ and $\F_{Y}$ be smooth foliations of codimension one 
on  $X$ and $Y$ respectively.  
A smooth foliation preserving map 
$\Psi : X \to Y$ is said to be a
{\em foliated Lefschetz fibration} 
if it satisfies the following conditions. 
\begin{itemize}
\item[(i)]
$\Psi$ is transverse to  $\F_{Y}$, 
\IE the composition of $D \Psi : TX \to TY$ 
and $TY \twoheadrightarrow \nu\F_{Y}$ is everywhere surjective, 
where $\nu \cdot$ denotes the normal bundle. 
\item[(ii)] 
On each leaf $L$ of $\F_{X}$, $\Psi\vert_{L}:L \to \Psi(L)$ is 
a Lefschetz fibration between the leaves. 
\item[(iii)]
The set of critical points of Lefschetz fibrations between leaves 
forms a complete 1-dimensional smooth submanifold which is 
transverse to $\F_{X}$.   
\end{itemize}
\end{definition}
\begin{remark} 
By taking a small perturbation if necessary,  
we may assume that any singular fiber of a foliated Lefschetz fibration contains only one critical point. 
Under this assumption, the critical locus embeds into $Y$ as a knot or link transverse to $\F_{Y}$.  
\end{remark}

A nontrivial example of a foliated Lefschetz fibration is given by the following result, which follows from our Main Theorem. 

\begin{theorem}[Foliated Lefschetz Fibration]
\label{Thm:FolLF}
For a $T_{p,q,r}$ singularity with $1/p+1/q+1/r\leqq 1$, 
the associated Lawson type foliation  $\F_{p,q,r}$ on $S^{5}$ 
admits a foliated Lefschetz fibration over the 
Reeb foliaiton $\F_{R}$ on $S^{3}$,   
whose critical locus consists of three components which 
are embedded into $S^{3}$ as torus knots of type 
{\color{black} $(p,p-1)$, $(q,q-1)$,  and $(r,r-1)$}.
\end{theorem}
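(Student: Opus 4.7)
The plan is to assemble $\Psi\colon S^5 \to S^3$ by combining the $T^2$-fibration $\Phi'$ on ${S^5}'$ with a leafwise extension over $S^5\setminus{S^5}'$ built from the family of Lefschetz fibrations supplied by Main Theorem. First I observe that the construction of $g|_{X_1}$ in Theorem~\ref{thm: main 2} depends smoothly on $\theta = \arg f$: both the deformation $f_t$ and the critical points $(a^{-1/p}e^{i(\theta+2\pi j)/p},0,0)$ (and the cyclic analogues on the $y$- and $z$-axes) are explicit smooth functions of $\theta$. This yields a smooth $S^1$-family $F_\theta\colon X_\theta \to D^2$ of Lefschetz fibrations. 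Choosing the Lawson parameter $\rho_0$ so that $7\rho_0/8 < 1/a$, all these critical points lie in the Milnor-fiber region $S^5\setminus{S^5}'$ of $\F_{p,q,r}$.

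On ${S^5}'$ I set $\Psi := \Phi'$, and on $S^5\setminus{S^5}'$ I define $\Psi$ by assembling the $F_\theta$'s together with the $\theta$-parameter into a map to the disk foliation of $R_2\setminus{S^3}'$, incorporating a $\theta$-dependent modification $\widetilde F_\theta$ of $F_\theta$ (obtained by composing with a smooth family of diffeomorphisms of the $D^2$-target together with a small axis-adapted perturbation that splits the degenerate critical values) so that the critical values of $\widetilde F_\theta$ on the $x$-, $y$-, $z$-axes wind around the origin of $D^2$ in the prescribed fashion. The matching on the collar $\partial{S^5}'$ is possible because, by Main Theorem, $\partial Y$ is fibered by regular Lagrangian tori over a circle parametrized by $\arg g$, and through the product structure $U = L\times D^2_{\rho_0}$ this $S^1$ is identified with the base circle of $L\to S^1$ parametrized by $\varphi$; a bump-function interpolation in $|f|$ across $\partial{S^5}'$ then gives a smooth $\Psi$. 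Conditions (i)--(iii) of Definition~\ref{Def:FolLF} follow directly: transversality holds on each piece; each Milnor-fiber leaf is mapped by the Lefschetz fibration $\widetilde F_\theta$ while each leaf inside ${S^5}'$ is mapped by the $T^2$-fibration $\Phi'$ (which has no critical points); and the critical set varies smoothly in $\theta$, giving a 1-dimensional submanifold transverse to $\F_{p,q,r}$.

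For the torus knot identification, the cyclic permutation $j\mapsto j+1$ induced by $\theta\mapsto\theta+2\pi$ forces the $x$-axis critical curve to close only after $\theta$ traverses $2\pi p$, giving a single embedded circle $C_x\subset S^5$ parametrized by $\phi\in\R/2\pi p\Z$ via $(a^{-1/p}e^{i\phi/p},0,0)$. Along $C_x$ one has $\arg f = \phi\bmod 2\pi$, so the $\theta_2$-coordinate of $\Psi$ winds $p$ times. The modification is engineered so that $\arg \widetilde F_\theta = (p-1)\arg x = (p-1)\phi/p$ at the $x$-axis critical points, producing a winding of $p-1$ in the $\theta_1 = \arg w_1$-coordinate as $\phi$ traverses $2\pi p$. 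Hence $\Psi(C_x)$ is a $(p,p-1)$-torus knot in $S^3$, and the same argument applied with the analogous modifications on the $y$- and $z$-axes yields torus knots of types $(q,q-1)$ and $(r,r-1)$.

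I expect the main obstacle to be the precise construction of the modification $\widetilde F_\theta$: it must be globally smooth on $X_\theta$, preserve the Lefschetz type of the critical points, realize the local formulas $\arg \widetilde F_\theta = (p-1)\arg x$ near the $x$-axis critical points (and the cyclic analogues near the other axes), and agree with $\varphi$ along $\partial X_\theta$. This requires a careful bump-function patching on $X_\theta$ between the three axis-adapted local models and a compatibility check, using the block structure of the monodromy $A_{p,q,r}$ of the $T^2$-bundle $L\to S^1$, that the three local prescriptions are jointly consistent with the identification $\varphi \leftrightarrow \arg g$ on the boundary.
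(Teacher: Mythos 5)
Your global strategy is the same as the paper's: take the construction of \S\ref{CLF} as a smooth family in $\theta=\arg f$, use $(g,\arg f)$ as the map to $R_2=D^2\times S^1$ on the complement of the tube $U$, match it with $\Phi'$ over ${S^5}'$, and read off the critical locus from Theorem~\ref{thm: main 2}; the paper's proof of Theorem~\ref{Thm:FolLF} is exactly this assembly, with the knot types attributed to ``a perturbation of $g$ like in \S\ref{Section:MilnorLattice}''. Where you diverge is in how the types $(p,p-1)$, $(q,q-1)$, $(r,r-1)$ are extracted, and this is where your proposal is both harder than necessary and not carried out. You prescribe $\arg \widetilde F_\theta=(p-1)\arg x$ at the $x$-axis critical points, i.e.\ you force the critical values to wind $p-1$ times around the \emph{origin} of the target disk. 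Note that this cannot be achieved by a perturbation: the unperturbed critical values sit at the single point $a^{-2/p}$, composing with a $2\pi$-periodic family of target diffeomorphisms changes the winding of the image curve about the origin only by multiples of $p$ over the full period $2\pi p$, and a small splitting near the cluster contributes nothing to the winding about the origin. So your $\widetilde F_\theta$ is necessarily a large global modification of $g$, and the ``main obstacle'' you flag (smoothness, preservation of Lefschetz type, boundary matching) is genuinely substantial on that route.

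The simpler route, which is the one the paper gestures at, is to keep the splitting small and measure the winding about the circle $\{a^{-2/p}\}\times S^1$ rather than about the core of $R_2$. A $\theta$-equivariant splitting (a fixed perturbation written in the trivialization used in \S\ref{Section:MilnorLattice}, where the critical points appear at $\exp\bigl(\frac{2\pi l-(r-1)\theta}{r}i\bigr)$ and its analogues, or equivalently a perturbation whose local phase is tied to the factor $\bar x_0/x_0^{\,p-2}$ appearing in Lemma~\ref{Hessian}) rotates the $p$ critical values near $a^{-2/p}$ by $-2\pi(p-1)/p$ per period $\theta\mapsto\theta+2\pi$, which is consistent with the cyclic permutation $j\mapsto j+1$. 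The image of the critical circle is then the $(p,-(p-1))$-curve on the boundary of a small tube around $\{a^{-2/p}\}\times S^1$; since any circle $\{c\}\times S^1\subset R_2$ is an unknot whose product framing is the $0$-framing (parallel circles $\{c\}\times S^1$, $\{c'\}\times S^1$ have linking number $0$), this cable is a torus knot of type $(p,p-1)$ up to mirror, and likewise for the other two axes. This observation removes the need for your engineered $\widetilde F_\theta$ entirely; the only points you must still check are the ones your proposal already lists correctly (embeddedness and mutual disjointness of the three image curves, and that the perturbation is supported away from the collar where $\Psi=\Phi'$). Also beware that neither your prescription nor this one pins down chirality, which the statement does not address either.
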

\begin{proof}
Let $K$ be the trivial knot in $S^3=\{(w_{1}, w_{2}) \in \C^{2}; |w_{1}|^{2}+|w_{2}|^{2}=1\}$ defined by $w_2=0$. 
We recall that $\arg f \colon S^5\setminus L\to S^1$ and $\arg w_2\colon S^3\setminus K\to S^1$ 
define open book decompositions, which are described by $(S^5, \arg f)$ and $(S^3, \arg w_2)$, respectively. 
As is mentioned in Remark~\ref{turbulize}, 
the Lawson-type foliation $\F_{p,q,r}$ and the Reeb foliation $\F_{R}$ 
are both the resultant foliations of the following procedure to 
the open book decompositions $(S^5, \arg f)$ and $(S^3, \arg w_2)$, respectively. 
\begin{enumerate}
\item
Pull back the Reeb component to a tubular neighborhood of the binding by a submersion. 
\item
Turbulize all the pages around this tubular neighborhood. 
\end{enumerate}
Therefore, it is enough to construct a map $\Psi \colon S^5\to S^3$ satisfying the following conditions:
\begin{enumerate}
\item[(a)]
$\Psi (S^5\setminus L)=S^3\setminus K$ and $\Psi (L)=K$.
\item[(b)]
$\Psi |_{S^5\setminus L}\colon S^5\setminus L\to S^3\setminus K$ is a foliated Lefsctez fibration with respect to the foliations defined as the level sets of $\arg f$ and $\arg w_2$. 
\item[(c)]
$\Psi |_{L}\colon L\to K$ coincides with the $T^2$-bundle map $\pi \colon L\to S^1$, 
where $K$ is idetified with $S^1$ by the diffeomorphism $K\ni (w_1, 0)\mapsto w_1\in S^1$. 
\end{enumerate} 
In \S~\ref{construction}, we have already obtained an $S^1$-parametric Lefschetz fibration 
$$(g, h)\colon \bigcup_{\theta \in S^1}Y_{\theta }\to D^2_{\frac{1}{3}}\times S^1_{\frac{1}{a}},$$
where the map $g\colon Y_{\theta }\to D^2_{\frac{1}{3}}$ is the Lefschetz fibtarion constructed in Theorem~\ref{precise statement} 
and the fiber bundle $h\colon \bigcup_{\theta \in S^1}\mathrm{Int} Y_{\theta }\to S^1_{\frac{1}{a}}$ is isomorphic to the Milnor fibration $\arg f \colon S^5\setminus L\to S^1$. 
Hence, there exists a smooth map $\tilde{g}\colon S^5\to D^2$ such that 
$$(\tilde{g}, \arg f)\colon S^5\setminus L\to \mathrm{Int} D^2\times S^1$$
is an $S^1$-parametric Lefschetz fibration and $\tilde{g}|_{L}\colon L\to \partial D^2=S^1$ coincides with the $T^2$-bundle $\pi $. 
Notice that the open solid torus $\mathrm{Int} D^2\times S^1$ with trivial codimension-one foliation 
can be seen as $S^3\setminus K$ foliated by the level sets of the function $\arg w_2$. 
Let us denote the diffeomorphism by $\Phi \colon \mathrm{Int} D^2\times S^1\to S^3\setminus K$. 
Now we define the map $\Psi \colon S^5\to S^3$ by 
\begin{equation*}
\Psi (p)=
\begin{cases}
\Phi (\tilde{g}(p), \arg f(p))\in S^3\setminus K \;\; \text{if} \; p\in S^5\setminus L, \\
(\tilde{g}(p), 0)\in K\subset \C^2 \;\; \text{if} \; p\in L. 
\end{cases}
\end{equation*}
Then it satisfies (a), (b) and (c) by construction, and thus, 
we obtain a foliated Lefschetz fibration $\Psi \colon (S^5, \mathcal{F}_{p,q,r})\to (S^3, \mathcal{F}_R)$. 

A perturbation of $g$ like in \S 3 naturally implies that 
the image of the critical locus by $\Psi$ 
consists of three torus knots of type $(p,p-1)$, $(q,q-1)$,  and $(r,r-1)$.
\end{proof}

Notice that $\Psi \colon S^5\to S^3$ is a $T^2$-fibration, 
by which the Lawson type foliation $\F_{p,q,r}$ can be regarded as the pullback of the Reeb foliation $\F_{R}$. 

In fact, the notion of foliated Lefschetz fibration was originally motivated by an attempt 
to construct leafwise symplectic foliations based on Gompf's construction of symplectic structures from Lefschetz fibrations. 
Now we see that this attempt has been successfully completed; indeed,   
the following result of the third author can be reproven as a corollary to Theorem~\ref{Thm:FolLF}. 

\begin{corollary}[\cite{Mi1, Mi2}]\label{Cor:Poisson}
The Lawson-type foliation $\F_{p,q,r}$ on $S^{5}$ 
for  $1/p+1/q+1/r\leqq 1$ 
admits a leafwise symplectic structure. 
In other words, it is the four dimensional symplectic foliation 
of a regular Poisson structure on the 5-sphere. 
\end{corollary}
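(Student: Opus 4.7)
My plan is to derive Corollary~\ref{Cor:Poisson} directly from Theorem~\ref{Thm:FolLF} by carrying out Gompf's construction of a symplectic form on a Lefschetz fibration leafwise and parametrically, following the foliated version of Gompf's theorem due to Presas. The foliated Lefschetz fibration $\Psi\colon(S^5,\F_{p,q,r})\to (S^3,\F_R)$ supplied by Theorem~\ref{Thm:FolLF} presents each 4-dimensional leaf $L$ of $\F_{p,q,r}$ as a genuine Lefschetz fibration over a 2-dimensional leaf $\Psi(L)$ of the Reeb foliation, with regular fiber $T^2$ and critical locus a smooth 1-submanifold transverse to $\F_{p,q,r}$. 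On each individual leaf Gompf's method produces a symplectic form taming the Lefschetz structure; what must be verified is that the entire construction varies smoothly in the transverse direction.

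First I would equip $\F_R$ with a smooth leafwise area form $\omega_Y$. Each leaf of $\F_R$ is an oriented surface (a plane inside a Reeb component or the central compact torus), and a rotation-invariant 2-form on $S^3$ can be written down explicitly whose restriction to every leaf is positive. Next I would construct a closed 2-form $\eta$ on $S^5$ whose restriction to each regular $T^2$-fiber of every leafwise Lefschetz fibration $\Psi|_L$ is positive of prescribed total area. This is the leafwise analogue of Thurston's construction for symplectic bundles: outside the critical locus $\Psi|_L$ is a $T^2$-bundle whose fiber represents a non-trivial second cohomology class, realizable by a closed leafwise 2-form assembled smoothly in the transverse coordinate via the $T^2$-bundle structure near the link and the moment-type formulas from \S\ref{CLF}. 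Gompf's inflation then yields a candidate leafwise form $\omega_L=\eta|_L+K\,\Psi^{*}\omega_Y|_L$ for $K\gg 0$; away from critical points, leafwise non-degeneracy is immediate since $\eta$ is positive along fibers, $\Psi^{*}\omega_Y$ is positive transverse to fibers and vanishes on them, and $K$ absorbs cross terms.

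Near each Lefschetz critical point I would replace the candidate by the standard K\"ahler form of the local holomorphic Lefschetz model $w_1+iw_2=(u_1+iu_2)^2+(v_1+iv_2)^2$ and interpolate through a cutoff supported in a small neighborhood. Because the critical locus is a smooth link transverse to $\F_{p,q,r}$, coordinates adapted both to the model and to the cutoff extend smoothly in the transverse parameter. The main obstacle will be establishing uniform transverse smoothness and leafwise non-degeneracy of all auxiliary data simultaneously, namely the closed fiber-area form $\eta$, the cutoff functions, the local K\"ahler interpolation, and a single inflation constant $K$ valid over all leaves at once. Particular care is required at the compact torus leaf of $\F_R$, where the topology of base leaves changes from plane to torus, and at the compact $T^2$-bundle leaves of $\F_{p,q,r}$, where the leafwise Lefschetz fibration degenerates to a $T^2$-fibration over $S^1$ carrying no critical points. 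Once these transverse-uniformity issues are settled, the family $\{\omega_L\}$ patches into a leafwise symplectic form on $\F_{p,q,r}$, which is equivalent to a regular Poisson bivector on $S^5$ whose symplectic leaves are precisely the leaves of $\F_{p,q,r}$, proving the corollary.
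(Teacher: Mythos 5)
Your proposal is essentially the paper's own argument: the paper deduces the corollary from Theorem~\ref{Thm:FolLF} by noting that the Reeb foliation carries a leafwise area form and then invoking Gompf's theorem \cite{Gompf} in the foliated (parametric) situation, which is exactly the leafwise Thurston--Gompf construction you spell out. Your sketch just makes explicit the inflation and local K\"ahler patching steps (and the transverse-uniformity caveats) that the paper leaves to the cited foliated version of Gompf's theorem.
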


\begin{proof}
As the Reeb foliation $\F_{R}$ admits a leafwise 
symplectic structure, it suffices to apply Gompf's theorem  \cite{Gompf}
in the foliated (parametric) situation. 
\end{proof}

\appendix
\section{Concise proof for the case where $p, q, r\geq 3$}
In this appendix, we provide a concise proof of Theorem~\ref{thm: main 2} under the conditions  $p, q, r \geq  3$ and $t=0$. 
We suppose that $f$, $a$ and $V_a(\e , t)$ are as in \S~\ref{Milnor fiber}, and $g$ as defined at the beginning of \S~\ref{CLF}. \\

\begin{theorem}~\label{original}
Suppose $3\leq p\leq q\leq r$ and $a>3r$. 
Then there exists a positive number $\delta $ such that 
if $|w|<\delta $, then the map $g|_{V_a(1,w)}\colon V_a(1,w)\to \C$
has exactly $(p+q+r)$ critical points 
$$\big(w^{\frac{1}{p}}{(u_p)}^j, 0, 0\big), \; 
\big(0, w^{\frac{1}{q}}{(u_q)}^k, 0\big), \; \big(0,0, w^{\frac{1}{r}}{(u_r)}^l\big),$$
where $u_n=\exp {(\frac{2\pi i}{n})}$, 
$0\leq j \leq p-1$, $0\leq k \leq q-1$, and $0\leq l \leq r-1$. 
Moreover, the $2$-jet of each of these critical points coincides with that of a Lefschetz singularity. 
\end{theorem}

The statement of this theorem is weaker than that of Theorem~\ref{thm: main 2}. 
However, it allows for a simpler proof, 
which may help the reader better understand the original proof of Theorem~\ref{thm: main 2}.  
Here we note that in the case $(p,q,r)=(2,3,r)$, the statement corresponding to Theorem~\ref{original} does not hold. 
In order to support all the simple elliptic and cusp singularities including the case where $p=2$, 
we need to take a larger $a$ and choose $w$ so that $|w|$ is not too small relative to $a$ 
(see Theorem~\ref{p=2} and Theorem~\ref{thm: main 2}). 
This is why the evaluation arguments become rather complicated in the proof of Theorem~\ref{thm: main 2}. 
We will explain it in detail later in \S~\ref{case p=2}. \\

To prove Theorem~\ref{original}, we first recall some notation in \S~\ref{CLF}. We set 
\[x=x_1+ix_2, \; y=y_1+iy_2, \; z=z_1+iz_2 \;\; (x_1,x_2,y_1,y_2, z_1, z_2\in \R). \]
Using holomorphic and anti-holomorphic vectors 
\[ 
\frac{\partial }{\partial x}=\frac{1}{2}\left(\frac{\partial }{\partial x_1}-i\frac{\partial }{\partial x_2}\right), \; 
\frac{\partial }{\partial \bar{x}}=\frac{1}{2}\left(\frac{\partial }{\partial x_1}+i\frac{\partial }{\partial x_2}\right), 
\]
we define two smooth vector fields $e_x$ and $E_x$ on $\C$ by 
\begin{eqnarray*}
e_x=i\left(x\frac{\partial }{\partial x}-\bar{x}\frac{\partial }{\partial \bar{x}}\right), \;
E_x=x\frac{\partial }{\partial x}+\bar{x}\frac{\partial }{\partial \bar{x}}. 
\end{eqnarray*}
Using the real coordinates $(x_1, x_2)$, they are describe as 
\begin{eqnarray*}
e_x=x_1\frac{\partial }{\partial x_2}-x_2\frac{\partial }{\partial x_1}, \; 
E_x=x_1\frac{\partial }{\partial x_1}+x_2\frac{\partial }{\partial x_2}.  
\end{eqnarray*}
Hence, these are the rotational vector field and the Euler vector field on $\C$, respectively. 
They canonically extend over $\C^3$ as smooth vector fields, 
which we denote by the same symbols $e_x$ and $E_x$. 
The vector fields $e_y$, $e_z$, $E_y$ and $E_z$ are similarly defined. 
Moreover, we define the vector field $e_0$ on $(\C^{\ast })^3$ by 
\[ e_0=\frac{1}{|x|^2}E_x+\frac{1}{|y|^2}E_y+\frac{1}{|z|^2}E_z. \]

Now we study the structure of level sets of the function 
$$g(x,y,z)=|x|^2+e^{\frac{2\pi i}{3}}|y|^2+e^{\frac{4\pi i}{3}}|z|^2,$$
which is necessary for detecting all the critical points of the map $g|_{V_a(1, w)}$. 
By an explicit computation, we can easily check that the singular set $\Sigma (g)$ of the function $g$ 
is the union of the $x$-axis, the $y$-axis and the $z$-axis. 
Hence, the intersection of any level set of $g$ and $(\C^{\ast })^3$ is a smooth real $4$-dimensional manifold. 
We put $W_c=g^{-1}(c)\cap (\C^{\ast })^3$ for any $c\in \R$. \\

\begin{proposition}~\label{tangent}
For any point $b$ in $W_c$, $\{e_x, e_y, e_z, e_0\}$ is a real basis of the tangent space $T_bW_c$. 
\end{proposition}
\begin{proof}
Since $e_x$ is a rotational vector field, the function $|x|^2$ is constant along its flow line, and thus, $e_x(|x|^2)=0$. 
Moreover, it is clear that $e_x(|y|^2)=e_x(|z|^2)=0.$ Hence we have $e_x(g)=0$. 
By the same argument, it follows that $e_y(g)=e_z(g)=0$. 

By the equalities $E_x(|x|^2)=2|x|^2$, $E_y(|y|^2)=2|y|^2$, $E_z(|z|^2)=2|z|^2$, we obain  
\[ 
e_0(g)=\frac{1}{|x|^2}E_x(|x|^2)+\frac{e^{\frac{2\pi i}{3}}}{|y|^2}E_y(|y|^2)+\frac{e^{\frac{4\pi i}{3}}}{|z|^2}E_z(|z|^2)
=2(1+e^{\frac{2\pi i}{3}}+e^{\frac{4\pi i}{3}})=0. \]
Thus we have $e_x(g)=e_y(g)=e_z(g)=e_0(g)=0$, namely, $e_x, e_y, e_z, e_0\in T_pW_c$. 
Since these $4$ vectors are linearly independent over $\R$, 
they form a basis of the real $4$-dimensional vector space $T_pW_c$. 
\end{proof}

Based on this proposition, we give a necessary and sufficient condition 
for a point on the Milnor fiber $V_a(1, w)$ being a regular point of the restricted map $g|_{V_a(1, w)}$. \\

\begin{proposition}~\label{regular}
A point $b=(x,y,z)\in V_a(1, w)$ with $xyz\ne 0$ is a regular point of the complex-valued function $g|_{V_a(1, w)}$ 
if and only if the equality \[\dim _{\R}\big<e_x(f), e_y(f), e_z(f), e_0(f)\big>_{\R}=2\] holds at the point. 
\end{proposition}
\begin{proof}
We put $c=g(b)$. 
The point $b$ is a regular point of $g|_{V_a(1, w)}$ if and only if 
the Milnor fiber $V_a(1, w)$ and the level set $W_c=g^{-1}(c)$ of $g$ transversely intersect at $b$. 
This condition is also equivalent to 
the one that $b$ is a regular point of $f|_{W_c}$, 
since $V_a(1,w)$ is a level set of the polynomial function $f$. 
Since we have $$T_pW_c=\langle e_x, e_y, e_z, e_0\rangle _{\R}$$ by Proposition~\ref{tangent}, 
the condition is paraphrased as 
\[ \big<e_x(f), e_y(f), e_z(f), e_0(f)\big>_{\R}=\C. \]
This completes the proof. 
\end{proof}

By explicit computations, we easily see that 
\begin{eqnarray*}
e_x(f)=i(px^p+axyz), \; e_y(f)=i(qy^q+axyz), \; e_z(f)=i(rz^r+axyz), \\
e_0(f)=\frac{1}{|x|^2}(px^p+axyz)+\frac{1}{|y|^2}(qy^q+axyz)+\frac{1}{|z|^2}(rz^r+axyz). 
\end{eqnarray*}
Noticing $xyz\ne 0$, we see that the equality 
\[\dim _{\R}\big<e_x(f), e_y(f), e_z(f), e_0(f)\big>_{\R}= 0\]
never holds at the point $(x, y, z)$. 
For, if $e_x(f)=e_y(f)=e_z(f)=0$, then it follows that  
\[ \frac{\partial f}{\partial x}(p)=\frac{\partial f}{\partial y}(p)=\frac{\partial f}{\partial z}(p)=0,  \]
which contradicts the assumption that $b$ is a point on $V_a(1,w)$, and hence, a regular point of $f$. 
Therefore, by combining this with Proposition~\ref{regular}, 
we see that $b\in V_a(1,w)$ is a critical point of $g|_{V_a(1,w)}$ if and only if the equality 
\[\dim _{\R}\big<e_x(f), e_y(f), e_z(f), e_0(f)\big>_{\R}= 1\] holds. 
Moreover, it is equivalent to the condition that 
the three complex numbers $e_x(f)$, $e_y(f)$, $e_z(f)$ are real multiples 
of a common non-zero complex number and the equality $e_0(f)=0$ holds.
Indeed, if there exists a non-zero complex number $u$ 
such that $e_x(f)=c_1u$, $e_y(f)=c_2u$, $e_z(f)=c_3u$ ($c_1,c_2, c_3\in \R$) and $e_0(f)\ne 0$ holds, 
then the two complex numbers $w$ and 
\[ e_0(f)=-i\left(\frac{e_x(f)}{|x|^2}+\frac{e_y(f)}{|y|^2}+\frac{e_z(f)}{|z|^2}\right)
=-i\left(\frac{c_1}{|x|^2}+\frac{c_2}{|y|^2}+\frac{c_3}{|z|^2}\right)u \]
are linearly independent over $\R$, which implies that 
\[ \dim _{\R}\big<e_x(f), e_y(f), e_z(f), e_0(f)\big>_{\R}= 2.\] 
Thus we have obtained the following. \\

\begin{proposition}~\label{critical}
A point $b\in V_a(1, w)$ is a critical point of the map $g|_{V_a(1, w)}$ 
if and only if at the point, $e_x(f)$, $e_y(f)$, $e_z(f)$ 
are real multiples of a common non-zero complex number and $e_0(f)=0$ holds. 
\end{proposition}

Now we are ready to prove Theorem~\ref{original}. 

\begin{proof}[{\bf Proof of Theorem~\ref{original}}]
Let $b=(x,y,z)$ be any point in $V_a(1,w)$. 
First we consider the case $xyz=0$. 
In this case, it is easily proven that $b$ is a critical point of $g|_{V_a(1,w)}$ 
only if two of $x$, $y$ and $z$ vanish. 
Therefore, a critical point is on the intersection of 
the Milnor fiber $V_a(1, w)=f^{-1}(w)\cap D^6$ and the union of $x$-axis, $y$-axis and $z$-axis, 
which consists of the $(p+q+r)$ points 
$$\big(w^{\frac{1}{p}}{(u_p)}^j, 0, 0\big), \; \big(0, w^{\frac{1}{q}}{(u_q)}^k, 0\big), \; \big(0,0, w^{\frac{1}{r}}{(u_r)}^l\big),$$
where $u_n=\exp {(\frac{2\pi i}{n})}$, 
$0\leq j \leq p-1$, $0\leq k \leq q-1$, and $0\leq l \leq r-1$. 
It is easily checked that 
these $(p+q+r)$ points are actually critical points and 
their $2$-jets are of Lefschetz type. 

Now what we only have to prove is that 
if $xyz\ne 0$, then $b=(x,y,z)$ is a regular point of $g|_{V_a(1, w)}$. 
In order for that, it suffices to show that $e_0(f)\ne 0$. 
Considering symmetry, we only discuss the case $|x|\leq |y|\leq |z|$. 
Moreover, we have $|x|, |y|, |z|\leq 1$, since $(x,y,z)\in V_a(1, w)\subset D^6$. 
Then the complex number 
\begin{eqnarray*}
e_0(f)&=&\frac{1}{|x|^2}(px^p+axyz)+\frac{1}{|y|^2}(qy^q+axyz)+\frac{1}{|z|^2}(rz^r+axyz)\\
&=&\left(\frac{px^p}{|x|^2}+\frac{qy^q}{|y|^2}+\frac{rz^r}{|z|^2}\right)
+\left(\frac{1}{|x|^2}+\frac{1}{|y|^2}+\frac{1}{|z|^2}\right)axyz
\end{eqnarray*}
never be equal to $0$. 
Indeed, by the conditions $3\leq p\leq q\leq r$, $0<|x|\leq |y|\leq |z|\leq 1$, $a>3r$ and the triangle inequality, 
it follows that 
\begin{eqnarray*}
\left|\frac{px^p}{|x|^2}+\frac{qy^q}{|y|^2}+\frac{rz^r}{|z|^2}\right|
\leq p|x|^{p-2}+q|y|^{q-2}+r|z|^{r-2}\leq r|x|+r|y|+r|z|\leq 3r|z|, \\
\left| \left(\frac{1}{|x|^2}+\frac{1}{|y|^2}+\frac{1}{|z|^2}\right)axyz \right|
>\frac{1}{|x|^2}|axyz|=a\frac{|y|}{|x|}|z|\geq a|z|>3r|z|. 
\end{eqnarray*}
Therefore, a point $b$ with $xyz\ne 0$ is a regular point of $g|_{V_a(1, w)}$. 
\end{proof}
Notice that in the above proof, the condition $3\leq p\leq q\leq r$ is used for 
the estimates $|x|^{p-2}\leq |x|$, $|y|^{q-2}\leq |y|$ and $|z|^{r-2}\leq |z|$.  

\section{The case where $p=2$}~\label{case p=2}
When $p=2$, the proof of Theorem~\ref{original} in the previous section does not work, since $|x|^{p-2}=|x|^0=1$. 
Moreover, in the case where $(p,q,r)=(2,3,r)$, the corresponding claim itself does not hold. \\

\begin{proposition}~\label{curve}
If $(p,q,r)=(2,3,r)$, then there exists a smooth curve $c\colon [0, 1]\to \C^3$ with $c(0)={\bf 0}$  
satisfying the following conditions; 
\begin{enumerate}
\item
$c((0,1])\cap V(0)=\emptyset$, 
\item
$c((0,1])\subset \{xyz\ne 0 \}$, 
\item
$c(s)$ is a critical point of $g|_{V(1, f(c(s)))}$ for each $s\in (0, 1]$. 
\end{enumerate}
\end{proposition}
\begin{proof}
We define a smooth curve $\gamma (s)=\left(x(s), y(s), z(s)\right)$ $(s\geq 0)$ by 
\[ x(s)=-\frac{as}{2}y(s), \;\; y(s)=a^{-2}\left(3-\sqrt{9-a^4s^2+2ra^2s^{r-2}}\right), \;\; z(s)=s.  \]
Then we have $\gamma (0)={\bf 0}$, 
and there exists a small positive number $\e $ such that if $0<s\leq \e$, then $x(s), y(s), z(s)\in \R\setminus \{0\}$. 
Hence, we have $\gamma (s)\in \R^3\setminus \{xyz= 0\}$ and $e_x(f), e_y(f), e_z(f)$ are all purely imaginary. 
Moreover, by $2x(s)+ay(s)z(s)=0$, we obtain that  
\begin{eqnarray*}
e_0(f)&=&\frac{1}{|x|^2}(2x^2+axyz)+\frac{1}{|y|^2}(3y^3+axyz)+\frac{1}{|z|^2}(rz^r+axyz) \\
&=&\frac{1}{y^2}\left(3y^3-\frac{a^2s^2}{2}y^2\right)+\frac{1}{s^2}\left(rs^r-\frac{a^2s^2}{2}y^2\right) \\
&=&-\frac{1}{2}a^2y^2+3y+rs^{r-2}-\frac{1}{2}a^2s^2=0. 
\end{eqnarray*}
Thus $\gamma (s)$ satisfies the condition of Proposition~\ref{critical}. 
Namely, $\gamma(s)$ is a critical point of $g|_{V(1, f(\gamma (s)))}$ with $xyz\ne 0$. 
Therefore, putting $c(s)=\gamma (\e s)$, the curve $c$ satisfies the desired conditions. 
\end{proof}

The curve in Proposition~\ref{curve} intersects with an arbitrarily thin Milnor tube. 
This implies that for any $\delta >0$, there exists a complex number $w$ 
such that $0<|w|\leq \delta $ and $V(1, w)\cap c([0, 1])\ne \emptyset $. 
Therefore, the claim corresponding to Theorem~\ref{original} does not hold when $(p,q,r)=(2,3,r)$. 
However, we can resolve this difficulty by taking $|w|$ not too small with respect to $a$. 
Concretely, it suffices to take $|w|=a^{-1}$. 
Moreover, by retaking $a$ large enough, we obtain the following theorem, 
which corresponds to a restricted version (the case where $t=0$) of Theorem~\ref{thm: main 2}. \\

\begin{theorem}~\label{p=2}
Suppose that $2\leq p\leq q\leq r$, $\frac{1}{p}+\frac{1}{q}+\frac{1}{r}\leq 1$ and $a>4r^2(2r+3)$. 
Then, for any $\theta \in \R$, 
the map $g|_{V_a\left(1,\frac{1}{a}e^{i\theta }\right)}\colon V_a\left(1,\frac{1}{a}e^{i\theta }\right)\to \C$
has exactly $(p+q+r)$ critical points 
$$\left(a^{-\frac{1}{p}}e^{\frac{i\theta }{p}}{(u_p)}^j, 0, 0\right), \; 
\left(0, a^{-\frac{1}{q}}e^{\frac{i\theta }{q}}{(u_q)}^k, 0\right), \; 
\left(0,0, a^{-\frac{1}{r}}e^{\frac{i\theta }{r}}{(u_r)}^l\right),$$
where $u_n=\exp {(\frac{2\pi i}{n})}$, 
$0\leq j \leq p-1$, $0\leq k \leq q-1$, and $0\leq l \leq r-1$. 
Moreover, the $2$-jet of each of these critical points coincides with that of a Lefschetz singularity. 
\end{theorem}
However, as it is, we do not know whether each critical point is really of Lefschetz type, 
and it is not necessarily the case that the boundary of each Milnor fiber is foliated by regular fiber tori. 
Therefore, in order to eliminate these inconveniences, 
we have constructed a smooth deformation $\{X_t\}_{0\leq t\leq 1}$ 
of the Milnor fiber $X_0:=V_a(1,\frac{1}{a}e^{i\theta })$ 
as a convex symplectic submanifold in $\C ^3$ 
so that the restriction of $g$ to $X_1$ becomes a Lagrangian torus fibration over $D^2$, 
and thus obtained Theorem~\ref{thm: main 2}. 
For the actual construction and proof, the reader is referred to \S 2.3. 

%



\end{document}